\documentclass[reqno,11pt]{amsart}
\usepackage[letterpaper,margin=1in,footskip=0.25in]{geometry}
\usepackage[utf8]{inputenc}
\usepackage{mathptmx}
\usepackage[cal=boondoxupr]{mathalfa}
\usepackage{mathrsfs}
\usepackage{setspace, amssymb, amsopn, amsmath, array, pdfsync, url, amsfonts, float,enumitem}
\usepackage[dvipsnames]{xcolor}
\definecolor{chianti}{rgb}{0.6,0,0}
\definecolor{meretale}{rgb}{0,0,.6}
\definecolor{leaf}{rgb}{0,.35,0}

\usepackage[colorlinks=true, hyperindex, citecolor=meretale, urlcolor=leaf, linkcolor=chianti]{hyperref}
\usepackage{tikz, tikz-cd}
\usetikzlibrary{decorations.markings}
\usepackage{fancyvrb,newverbs}
\usepackage[capitalise]{cleveref}
\usepackage{multicol}
\usepackage{booktabs}
\usepackage[all]{xy}

\usepackage[most]{tcolorbox}

\newtcolorbox{commentbox}{
    enhanced,
    breakable,
    colback=white,
    colframe=OliveGreen,
    colbacktitle=OliveGreen,
    coltitle=white,
    fonttitle=\bfseries,
    title=Comment,
}

\newtcolorbox{todobox}{
    enhanced,
    breakable,
    colback=white,
    colframe=Mahogany,
    colbacktitle=Mahogany,
    coltitle=white,
    fonttitle=\bfseries,
    title=To Do,
}

\tikzset{degil/.style={
            decoration={markings,
            mark= at position 0.5 with {
                  \node[transform shape] (tempnode) {$\backslash$};
                  }
              },
              postaction={decorate}
}
}

\definecolor{cverbbg}{gray}{0.93}

\allowdisplaybreaks

\numberwithin{equation}{section}
\newtheorem{Theoremx}{Theorem}
 
\newtheorem{theorem}{Theorem}[section]

\theoremstyle{definition}

\theoremstyle{definition}

\newtheorem{lemma}[theorem]{Lemma}
\newtheorem{notation}[theorem]{Notation}

\newtheorem{proposition}[theorem]{Proposition}
\newtheorem{corollary}[theorem]{Corollary}

\theoremstyle{definition}
\newtheorem{definition}[theorem]{Definition}
\newtheorem{remark}[theorem]{Remark}
\theoremstyle{remark}

\renewcommand{\ker}{\operatorname{ker}}

\newcommand{\Spec}{\operatorname{Spec}}

\usepackage{stmaryrd}

\newcommand{\Soc}{\operatorname{Soc}}
\newcommand{\Hom}{\operatorname{Hom}}

\newcommand{\NN}{\mathbb{N}}

\newcommand{\Proj}{\operatorname{Proj}}
\newcommand{\Span}{\operatorname{Span}}

\newcommand{\Frac}{\operatorname{Frac}}

\newcommand{\Z}{\mathbb{Z}}
\newcommand{\F}{\mathbb{F}}

\newcommand{\PP}{\mathbb{P}}
\newcommand{\A}{\mathbb{A}}

\newcommand{\fm}{\mathfrak{m}}
\newcommand{\fp}{\mathfrak{p}}
\newcommand{\fq}{\mathfrak{q}}
\newcommand{\fa}{\mathfrak{a}}
\newcommand{\fb}{\mathfrak{b}}

\newcommand{\fn}{\mathfrak{n}}

\newcommand{\cO}{\mathcal{O}}

\newcommand{\cP}{\mathcal{P}}

\makeatother

\crefname{Theoremx}{Theorem}{Theorems}
\crefname{setting}{Setting}{Settings}
\crefname{figure}{Figure}{Figures}

\makeatletter
\renewcommand*{\eqref}[1]{%
  \hyperref[{#1}]{\textup{\tagform@{\ref*{#1}}}}%
}
\makeatother

\usepackage[backend=biber, style=numeric, datamodel=mrnumber,sorting=anyt,maxalphanames=5,maxbibnames=99]{biblatex}
\usepackage{hyperref}
\usepackage{xurl}
\hypersetup{breaklinks=true}

\DeclareFieldFormat{mrnumber}{%
  MR\addcolon\space
  \ifhyperref
    {\href{http://www.ams.org/mathscinet-getitem?mr=#1}{\nolinkurl{#1}}}
    {\nolinkurl{#1}}}

\renewbibmacro*{doi+eprint+url}{%
  \iftoggle{bbx:doi}
    {\printfield{doi}}
    {}%
  \newunit\newblock
  \printfield{mrnumber}%
  \newunit\newblock
  \iftoggle{bbx:eprint}
    {\usebibmacro{eprint}}
    {}%
  \newunit\newblock
  \iftoggle{bbx:url}
    {\usebibmacro{url+urldate}}
    {}}

\renewbibmacro{in:}{}

\begin{document}

\title{Bertini's theorem for \texorpdfstring{$F$}{F}-rationality is false}

\author[Polstra]{Thomas Polstra}
\address{Department of Mathematics, University of Alabama, Tuscaloosa, AL 35487 USA}
\email{tmpolstra@ua.edu}
\urladdr{\url{https://thomaspolstra.github.io/}}

\author[Simpson]{Austyn Simpson}
\address{Department of Mathematics, Bates College, 3 Andrews Rd, Lewiston, ME 04240 USA}
\email{asimpson2@bates.edu}
\urladdr{\url{https://austynsimpson.github.io/}}

\begin{abstract}
    Let $k=\overline{\F_2}$. We construct a nine-dimensional $F$-rational affine variety $X$ admitting a locally closed embedding $X\hookrightarrow \PP_k^{19}$ together with a dense open set $U\subseteq (\PP^{19})^\vee$ such that $X\cap H$ is not $F$-rational (or even $F$-injective) for all $H\in U$. We also construct a nine-dimensional projective $F$-rational variety $\mathfrak{X}$ and a closed embedding  $\mathfrak{X}\hookrightarrow\mathbb{P}^{29}_k$ under which the same conclusion holds.
\end{abstract}

\maketitle

\section{Introduction}
This article is concerned with the potential loss of $F$-rationality in a general hyperplane section of a variety, a phenomenon which does not occur for rational singularities over the complex numbers. Consider for motivation the following two persistence conditions for a property $\cP$ of noetherian local rings.

\begin{enumerate}[label=(\Roman*)]
    \item Let $(R,\fm)\to (S,\fn)$ be a flat local ring homomorphism between excellent local rings. If $R$ satisfies $\cP$ and if the closed fiber $S/\fm S$ is a regular local ring, then $S$ satisfies $\cP$.\label{I}
    \item Let $X$ be an algebraic variety over an algebraically closed field $k$, all of whose local rings $\cO_{X,x}$ satisfy $\cP$. If $X\hookrightarrow \PP^n_k$ is a locally closed embedding, then the local rings of general hyperplane section $X\cap H$ also satisfy $\cP$.\label{II}
\end{enumerate}

Over the complex numbers, the property $\cP$ of having rational singularities is known to satisfy both \ref{I} and \ref{II}. The first fact is due to Elkik in her study of deformations of rational singularities (see \cite[Th\'{e}or\`{e}me 5]{Elk78}) while the second is classical: for instance, one can apply Bertini's theorem for smoothness to a resolution and then use the projection formula (see also \cite{Kol97} and \cite[Lemmas 2 and 3]{Bri02}).

In light of the connections between rational and $F$-rational singularities via reduction to prime characteristic \cite{Har98,MS97,Smi97}, there has been an active program for the past few decades to show that $F$-rationality and other Frobenius singularities also satisfy conditions \ref{I} and \ref{II}; see \cite{Abe01,DM24,Ene00,Ene09,PSZ18,QGSS24,SZ13,DSPS25}. For $F$-rationality, \ref{I} is false by \cite{QGSS24} while \ref{II} does hold provided an additional $F$-purity assumption on $X$ \cite{DSPS25}. We show that without this $F$-purity assumption, Bertini's theorem for $F$-rationality can fail.
\begin{Theoremx}\label{mainthm:affine}
Let $k=\overline{\F_2}$. There exists a nine-dimensional $F$-rational domain $R$ finitely generated over $k$, a locally closed embedding $X=\Spec R\hookrightarrow \PP^{19}_k$, and a dense open set $U\subseteq(\PP^{19}_k)^\vee$ such that $X\cap H$ is not $F$-rational for all $H\in U(k)$. In fact, $X\cap H$ is not even $F$-injective.
\end{Theoremx}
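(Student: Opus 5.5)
The plan is to realize a general hyperplane section of $X$ as, up to isomorphism, a closed fiber of a fixed family, and to arrange that every such fiber fails to be $F$-injective while the total space is $F$-rational. Concretely, I would start from the mechanism behind the failure of \ref{I} in \cite{QGSS24}. There one has a graded ring over a non-perfect field, such as $k(s)$, whose behaviour changes under the inseparable extension $k(s)\subseteq k(s^{1/2})$. I would spread such a ring out over $k[s]$ to obtain a finitely generated $k$-domain $R$ of dimension nine, together with a morphism $s\colon X=\Spec R\to\A^1_k$. The goal is that $R$ itself is $F$-rational but no fiber $R/(s-\lambda)$, for $\lambda\in k$, is $F$-injective. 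The failure of $F$-injectivity should come from an explicit nonzero socle-type class in top local cohomology killed by Frobenius. Characteristic $2$ is used so that squares are additive.

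Step one is to verify that $R$ is $F$-rational. I would show that $R$ is Cohen--Macaulay, for instance via a graded presentation and an $a$-invariant/degree computation. I would then show that the tight closure of a single system-of-parameters ideal (equivalently, $0^*$ in $H^9_\fm(R)$) is trivial. The expected route is a test element $c$, taken from the Jacobian ideal, with $R_c$ regular. Step two is to show that each fiber $R/(s-\lambda)$ is not $F$-injective, by exhibiting the Frobenius-killed class explicitly.

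Step three is the embedding. I would choose $20$ generators $x_0,\dots,x_{19}$ of $R$ (with $x_0=1$ after dehomogenizing) giving a locally closed embedding $X\hookrightarrow\PP^{19}_k$. They should be chosen so that for $H=\{\sum a_ix_i=0\}$ with $a$ in a dense open $U$, the linear form $\sum a_ix_i$ restricted to $X$ equals, after an automorphism of $X$ or of $R$ depending on $a$, the function $s-\lambda(a)$. The characteristic-$2$ trick is to include coordinates of the form $y_i^2$. Then $\sum a_i y_i^2=(\sum a_i^{1/2}y_i)^2$, and since $k$ is perfect the coefficients $a_i^{1/2}$ make sense. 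The hyperplane equation then becomes a change of variables applied to the defining data of the family. This reduces $X\cap H$ to a fiber of the type treated in step two. Non-$F$-rationality then follows formally, because $F$-rational rings are $F$-injective.

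I expect the main obstacle to be the interplay between steps one and three. The embedding must be rich enough that general hyperplanes are identified with bad fibers for all $H$ in a dense open set, not just for special ones. At the same time, adjoining the extra coordinates must not destroy the $F$-rationality of $R$, and the bookkeeping must land exactly on dimension $9$ and $\PP^{19}$. I would first try to make the identification in step three hold for the generic hyperplane over $k(a_0,\dots,a_{19})$. I would then descend to a dense open $U$ by constructibility of the non-$F$-injective locus in the family of hyperplane sections.
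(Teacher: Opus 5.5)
\textbf{There is a genuine gap in Step three.} That step carries essentially all of the difficulty, and the mechanism you propose for it is unsupported. Whatever coordinates $1,u_1,\dots,u_{19}\in R$ you choose, a general hyperplane restricts to a function $L$ that contains a \emph{general} linear combination of the algebra generators of $R$. In the paper this is $L=h_1+h_2+(t_1^2+\alpha_0)x_1x_2x_3+\sum_{i=1}^3(t_{i+1}^2+\alpha_i)yx_i$, where $h_1$ is affine-linear in the $t_j$ and $h_2$ is an arbitrary combination of nine cubic monomials. To make $X\cap H$ a fiber $s=\lambda$ up to an automorphism, $\operatorname{Aut}(X)$ would have to carry a dense subset of a $19$-dimensional linear system onto the one-parameter family $s-\lambda$. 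Nothing in your plan produces such automorphisms. Coordinates of the form $y_i^2$ do not help: they contribute one term $(\sum\sqrt{a_i}\,y_i)^2$, and the general combination of the remaining non-square coordinates survives untouched. There is also evidence that a global identification is the wrong target. If $s$ is a linear form in the $t_j$, then by translation invariance and the paper's Fedder-type lemma, every point of $V(R_+)\cap V(s-\lambda)\cong\A^5$ is a non-$F$-injective point of the fiber. By contrast, the paper locates bad points of $X\cap H$ only along a line, because elsewhere on $V(R_+)\cap H$ the coefficients of $x_1x_2x_3$ and $yx_i$ are units. Separately, your worry that extra coordinates might destroy $F$-rationality is misplaced: the coordinates are elements of $R$, and choosing an embedding does not change $R$.

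\textbf{The missing idea is that no global identification is needed.} It suffices to find one point of $X\cap H$ whose local ring is not $F$-injective. At a point of $Z=V(R_+)$, this failure is insensitive to changing the equation by elements of $\fn^{[2]}$, the Frobenius power of the maximal ideal of the hypersurface at that point. The paper uses this as follows.
\begin{enumerate}
\item All local rings of $X$ at points of $Z$ are isomorphic via $t_i\mapsto t_i+\tau_i$, $y\mapsto y+\sum_i\sqrt{\tau_i}\,m_i$.
\item Nine of the thirteen degree-three generators already lie in $\fn^{[2]}$, since each cubic monomial other than $x_1x_2x_3$ is divisible by some $x_r^2$.
\item Replacing the coordinate $t_6$ by $\theta=t_6+t_1^2x_1x_2x_3+t_2^2yx_1+t_3^2yx_2+t_4^2yx_3$ turns the coefficients of the four remaining generators in $L$ (after normalizing $a_6=1$) into squares $(t_j+r_j)^2$ of affine-linear forms.
\item These squares vanish together with $h_1$ along an affine line $\Gamma_H\subseteq Z\cap H$. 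At each point of $\Gamma_H$ one has $L\equiv\widetilde h_1 \bmod \fn^{[2]}$, with $\widetilde h_1$ a nonzero linear form in the translated variables $T_j$.
\item The element $\xi=Y\sum_j\sqrt{b_j}\,w_j$ then satisfies $\xi\notin J$ but $\xi^2\in J^{[2]}$, where $J=(T_1,\dots,T_5,x_1^3,x_2^3,x_3^3)$ is a parameter ideal of the local ring of $X\cap H$.
\end{enumerate}
This is where characteristic $2$ and perfectness of $k$ genuinely enter.

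Your Step one is also only a sketch, since you do not say which ring to spread out. In the paper, the total space is $F$-rational because each of the six quadratic monomials has its own parameter $t_i$. As a result, the Frobenius powers of the seven socle generators become $w^q,\,t_1^{q/2}w^q,\dots,t_6^{q/2}w^q$, and these remain independent modulo $\fb_q$.
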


To understand the construction of this counterexample, we recall the axiomatic connection established in \cite{CGM86} between the two persistence phenomena above. Consider the closure $Z$ of the incidence correspondence $$\{(x,H)\mid H\in (\PP^n_k)^\vee, x\in H\}\subseteq \PP^n_k\times_k (\PP^n_k)^\vee.$$ Set $Y:=X\times_{\PP^n_k} Z$ and let $\eta$ be the generic point of $(\PP^n_k)^\vee.$ To show that the version of Bertini's theorem described in \ref{II} holds for some property $\cP$, one might try to use \ref{I} to pass the property from $X$ to $Y_\eta\times_{\kappa(\eta)} L$ for finite extensions $\kappa(\eta)\subseteq L$. One can then attempt to spread the property from the geometric generic fiber to general hyperplane sections, thus proving \ref{II}. The examples produced in \cite{QGSS24} were therefore promising evidence that $F$-rationality could fail Bertini's theorem, as they demonstrated the loss of $F$-rationality along a purely inseparable base change $R\to R\otimes_K K^{1/p}$ where $K=\F_p(t)$. The challenge lies in realizing this same pathology in the generic hyperplane section of a variety over an algebraically closed field.

Our construction is built from spreading out a weighted variant of the rings produced in \cite{QGSS24}. More precisely, we construct an $F$-rational ring $R$ for which $\Spec R$ is the total space of a flat family over $\A^6_{\overline{\F}_2}$ whose generic fiber is $F$-rational but not geometrically reduced. We then find a convenient replacement of one of the algebra generators of $R$ yielding an embedding under which Bertini fails. To obtain a projective counterexample, we instead construct a projective compactification of $X$ with explicitly controlled affine charts and a suitable closed immersion into projective space.

\begin{Theoremx}\label{mainthm:projective}
    There exists a nine-dimensional $F$-rational projective variety $\mathfrak{X}$ over $k=\overline{\F_2}$, a closed embedding $\mathfrak{X}\hookrightarrow \PP^{29}_k$, and a dense open set $U\subseteq \left(\PP^{29}_k\right)^{\vee}$ such that for every $H\in U(k)$, $\mathfrak{X}\cap H$ is not $F$-injective.
\end{Theoremx}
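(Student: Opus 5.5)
The plan is to deduce \cref{mainthm:projective} from \cref{mainthm:affine} by compactifying the affine example $X=\Spec R$ so that $F$-rationality is preserved. Write $R$ as a quotient of a polynomial ring with explicit generators. Since $\Spec R$ is a flat family over $\A^6$, it is natural to assume $R$ is generated by the nine coordinates $x_0,\dots,x_8$ of the base and fiber directions together with the remaining weighted generators. I would homogenize the defining equations with respect to a suitable (weighted) grading. This gives a projective variety $\overline{X}$ covered by finitely many affine charts. On each chart I want the coordinate ring to be either isomorphic to a localization of $R$, or a polynomial extension of a ring of the same type as $R$. The latter should be $F$-rational by the same argument that proved $R$ is $F$-rational: the deformation/weighted arguments of \cite{QGSS24}-type, which the paper presumably establishes before proving \cref{mainthm:affine}.

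The first step is to choose the compactification so that the chart ``at infinity'' for each weighted variable is again a ring in the same family. I would achieve this by symmetrizing the equations, i.e.\ using the $\PP^{n}$-homogenized form of the base $\A^6\subseteq\PP^6$ in a way that keeps each chart of the form (base chart) $\times$ (fiber model). $F$-rationality is local, so $\mathfrak X:=\overline{X}$ is $F$-rational once every chart is.

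The second step is to construct a closed embedding $\mathfrak X\hookrightarrow\PP^{29}_k$. I would use a very ample linear system whose restriction to the open chart $X$ recovers, up to adding extra coordinates, the locally closed embedding $X\hookrightarrow\PP^{19}$ from \cref{mainthm:affine}, including the replaced generator. Concretely, I take the $20$ sections giving that embedding and add $10$ further monomials needed to separate points and tangents at infinity; this accounts for $30=20+10$ coordinates. Then for a general $H\in(\PP^{29})^\vee$, the section $H\cap X$ is a hyperplane section of $X$ under a projection-compatible embedding. The proof of \cref{mainthm:affine} shows non-$F$-injectivity for the generic hyperplane, via failure of geometric reducedness of the generic fiber and spreading out. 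I would rerun that argument with $30$ linear coefficients instead of $20$: the additional coefficients only enlarge the base field $\kappa(\eta)$, and the purely inseparable pathology persists after a separable (purely transcendental) extension.

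Finally, non-$F$-injectivity of the open subscheme $X\cap H\subseteq\mathfrak X\cap H$ implies that $\mathfrak X\cap H$ is not $F$-injective, since $F$-injectivity localizes. Intersecting the resulting dense open sets gives $U$.

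The main obstacle is the first step: ensuring that the charts at infinity are still $F$-rational. Without $F$-purity there is no general stability tool, so the compactification must be engineered so that each chart is literally isomorphic to (a polynomial extension of a localization of) a ring already known to be $F$-rational. I would try first a weighted projective closure in which the extra charts are obtained by the same substitution that defines $R$, and check it by explicit computation.
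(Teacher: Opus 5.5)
Your third step is where the argument breaks. The proof of \cref{mainthm:affine} is not a generic-fiber or spreading-out argument. It is an explicit Frobenius-closure computation at the closed points of a line $\Gamma_H$, and it depends on the exact shape of the restricted equation. Because of the choice of $\theta$, every $H\in U$ restricts to
$L=h_1+h_2+(t_1+r_1)^2x_1x_2x_3+\sum_{i=1}^3(t_{i+1}+r_{i+1})^2yx_i$,
where $h_2$ is a combination of the other nine cubic monomials. So the four cubic monomials not already in the Frobenius square of the maximal ideal occur only with coefficients that are squares. That is exactly what gives $\xi^2\in J^{[2]}$ after translating to a point of $\Gamma_H$.

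Adding ten coordinates is not a base-field enlargement. A general hyperplane of $\PP^{29}_k$ restricts to $X$ as $\sum_{j<20}a_js_j+\sum_{j\geq 20}a_js_j$, with ten new functions $s_j$ and independent coefficients. The hyperplanes treated in \cref{mainthm:affine} form only the closed locus $a_{20}=\cdots=a_{29}=0$ of this family, and nothing about a general member follows from that. Whether the pathology survives depends on what the $s_j$ are. For example, a section restricting to $t_1x_1x_2x_3$ gives the coefficient of $x_1x_2x_3$ a nonzero linear part at every point. Then neither the argument of \cref{mainthm:affine} nor the functional $\lambda$ of \cref{lemma: Not F-injective lemma} applies.

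The paper builds its linear system around exactly this constraint:
\begin{enumerate}
\item The only sections with a cubic component are the $T_a^2zm$ with $m\in\mathcal{E}_3$. So on $D_+(T_{a_0}z^2)$ the cubic part of any restricted hyperplane is $\sum_m\ell_m^2m$ with $\ell_m$ linear forms, using characteristic $2$.
\item All other sections restrict to affine-linear forms in the $t_a$ or to elements of $\fn^{[2]}$.
\item At a point of the line cut out by $h$ and the four $\ell_m$ with $m\in\{x_1x_2x_3,x_1y,x_2y,x_3y\}$ (nonempty when $N_H$ has rank $5$), one gets $L=h+\epsilon$ with $\epsilon\in\fn^{[2]}$. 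Then \cref{lemma: Not F-injective lemma} shows via the Frobenius dual that $R_{a_0}/(L)$ is not $F$-injective.
\end{enumerate}

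Your count $30=20+10$ is also unsupported. Nothing guarantees that ten extra sections give a very ample system. On a compactification like the paper's, with monomial sections, base-point-freeness alone needs a distinct monomial ($T_a^3y^3$ or $T_a^3x_i^6$) for each of the $24$ points with $z=0$ where exactly one $T_a$ and one of $y,x_1,x_2,x_3$ is nonzero. None of these is covered by your twenty sections, all of which are divisible by $z$.

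In the paper, $29$ has a different origin. $\mathfrak X$ is first embedded in $\PP^{2547}_k$ by $2548$ monomials chosen to generate every chart of the cover in \cref{lem: An affine cover of X}. Each chart needs at most $20$ generators, so every local ring has embedding dimension at most $20$. Hence the tangent variety has dimension at most $29$ and the secant variety at most $19$, and projection from a general $\PP^{2517}_k$ is a closed immersion into $\PP^{29}_k$. Hyperplanes of $\PP^{29}_k$ are the hyperplanes of $\PP^{2547}_k$ containing the center, so a general one still lies in the good open set of Theorem~\ref{theorem: the embedding, F-rationality, failure of Bertini}. This is how the special shape of the linear system survives the drop to $30$ coordinates.

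Finally, your first step is left open. Symmetrizing the base is the right instinct: the paper uses $F=\sum_{a\in\Lambda}T_am_a^2$, treating $y$ as one of seven quadratic monomials, and adds $z$ to compactify the fibers. But the charts over the other vertices of $\PP^6_k$ are then the rings $R_a$ with $f_a=m_a^2+\sum_{b\neq a}t_bm_b^2$, where a quadratic monomial in the $x_i$ plays the role of $y$. These are not covered by the tight closure computation of \cref{section quasi-projective example}. The paper proves their $F$-rationality separately, using the canonical module $\bigoplus_n(C_a)_{3n-1}$ and the Frobenius dual (\cref{lem: F rationality of Veronese general}). It handles the charts with $z=0$ as direct summands of regular rings.
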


The projective counterexample of \cref{mainthm:projective} is less explicit than the affine counterexample of \cref{mainthm:affine}. The nine-dimensional variety $\mathfrak{X}$ is a projective compactification of $X$. Its embedding into $\PP^{29}_k$ is obtained by first constructing an explicit closed immersion $\mathfrak{X}\hookrightarrow\PP^{2547}_k$ and then
restricting the linear projection $\PP^{2547}_k\dashrightarrow\PP^{29}_k$ from a general center $\PP^{2517}_k\subseteq\PP^{2547}_k$.

For comparison, using ideas from \cite{CGM89}, Schwede and Zhang construct a projective surface that is $F$-injective away from finitely many points but whose general hyperplane section is not $F$-injective. This left unanswered whether such a failure can occur for a variety which is either normal or which is $F$-injective \emph{everywhere}; see \cite[Remark 7.6 and Question 8.1]{SZ13}. \cref{mainthm:affine,mainthm:projective} resolve both of these issues because $F$-rationality implies both $F$-injectivity and normality. Finally, the same examples show that Bertini's theorem fails for $F$-anti-nilpotence, a singularity condition lying strictly between $F$-rationality and $F$-injectivity; see for example \cite{EH08,MQ18}.

We prove \cref{mainthm:affine,mainthm:projective} in \cref{section quasi-projective example,sec: projective}, respectively. The proof in the affine setting uses an elementary tight closure calculation, whereas in the projective case we instead use canonical modules and Frobenius duals.

\subsection*{Acknowledgments} 
    We are grateful to Alessandro De Stefani and Wenliang Zhang for helpful discussions. Polstra was supported by a grant from the National Science Foundation DMS \#2502317.
\subsection*{AI Disclosure} These examples were found with the help of ChatGPT-5.6 Sol Pro and ChatGPT-6 Astra Pro. We include an account of how the main theorems were developed.
\begin{enumerate}[label=(\arabic*)]
    \item A prompt intended to narrow the search space was given to ChatGPT-5.6 Sol Pro which included a suggested modification of the hypersurfaces found in \cite{QGSS24}. ChatGPT correctly identified the variety $X=\Spec R$ described in \cref{section quasi-projective example}. However, the locally closed embedding claimed to witness the failure of Bertini's theorem was a certain $X\hookrightarrow \PP^{43}_k$. In addition, there were several inaccuracies in the proof that a general hyperplane section with respect to the suggested embedding was not $F$-injective. In the same response, ChatGPT reported failing to find a \emph{projective} counterexample.
    \item For the same $X=\Spec R$, the authors then identified the simpler embedding $X\hookrightarrow \PP^{19}_k$ presented in \cref{section quasi-projective example} under which a general hyperplane is not $F$-injective.\label{AI:embedding}
    \item With the new embedding in \ref{AI:embedding}, we tasked both ChatGPT-5.6 Sol Pro and ChatGPT-6 Astra Pro (after the latter was released) with finding a projective compactification $\mathfrak{X}$ of $X$ and a \emph{closed} embedding $\mathfrak{X}\hookrightarrow \PP^N_k$ under which a general hyperplane section is not $F$-injective. ChatGPT-6 Astra Pro correctly identified such an embedding with $N=2547$.\label{AI:projective}
    \item The authors identified the simplification of $\mathfrak{X}\hookrightarrow \PP^{29}_k$ via the process outlined in the introduction.
\end{enumerate}
The strategy for proving that general hyperplane sections of these varieties are not $F$-injective has been reworked considerably from the suggestions made by ChatGPT both for simplicity and to reflect the authors' tastes. All writing is due to the authors, although ChatGPT-6 Astra Pro was used in several iterations of proofreading. The authors assume all responsibility for the correctness of this writeup.

\section{Preliminaries}

\subsection{Frobenius and tight closure}
Let $(R,\fm)$ be an $F$-finite $d$-dimensional Cohen--Macaulay local domain of prime characteristic $p>0$ and let $\underline{x}=x_1,\dots, x_d$ denote a system of parameters for $R$. Using the \v{C}ech complex on $\underline{x}$, recall that $R$ is \emph{$F$-injective} if the natural Frobenius map $F:H^d_\fm(R)\to H^d_\fm(R)$ sending $\left[\frac{r}{x_1^n\dots x_d^n}\right]\to \left[\frac{r^p}{x_1^{np}\dots x_d^{np}}\right]$ is injective. The ring $R$ is said to be \emph{$F$-rational} if the tight closure of zero
\begin{align*}
    0^*_{H^d_\fm(R)}=\{\eta\in H^d_\fm(R)\mid \text{there exists } 0\neq c\in R \text{ such that }cF^e(\eta)=0\text{ for all }e\gg0\}
\end{align*}
is the zero submodule of $H^d_\fm(R)$.

Both of these notions may also be restated in terms of Frobenius and tight closure of ideals. Let $I\subseteq R$ be an ideal. Recall that the \emph{Frobenius closure of $I$}, denoted $I^F$, is the ideal consisting of elements $r\in R$ such that $r^q\in I^{[q]}$ for all $q=p^e\gg 0$. The \emph{tight closure of $I$}, denoted $I^*$, comprises the elements $r\in R$ for which there exists $0\neq c\in R$ such that $cr^q\in I^{[q]}$ for all $q\gg 0$. Under our current hypotheses, $R$ is $F$-injective (respectively, $F$-rational) if and only if $\fq=\fq^F$ (respectively, $\fq=\fq^*$) for some ideal $\fq$ generated by a system of parameters; see \cite[Corollary 3.9]{QS17} and \cite[Theorem 4.2(d)]{HH94a}.

Note that if $\fq\subseteq R$ is a parameter ideal, then $R$ is $F$-rational if and only if $\frac{\fq^*}{\fq}$ intersects the socle $\Soc(R/\fq)$ only at zero.

\subsection{Frobenius dual}
\label{subsection: Frobenius dual}

We recall the dual description of Frobenius and its relation to $F$-singularities. See \cite[Section~2.2]{DSPS25} for a more detailed discussion. Let $R$ be an $F$-finite Cohen--Macaulay domain of characteristic $p>0$ with a global canonical module $\omega_R$. Finite duality identifies
\[
    F_*^e\omega_R\cong \Hom_R(F_*^eR,\omega_R).
\]
Under compatible choices of these identifications, evaluation at $F_*^e1$ defines the \emph{Frobenius dual}
\[
    \Phi_R^e:F_*^e\omega_R\longrightarrow\omega_R
\]
obtained as the $\omega_R$-dual of $F^e:R\to F^e_*R$. In particular, if $\Phi_R = \Phi_R^1$, then $\Phi_R^{e+1}=\Phi_R\circ F_*\Phi_R^e$.

These maps commute with localization and completion. For a prime $\mathfrak p\subseteq R$, put $d=\dim R_{\mathfrak p}$. Local duality identifies the completed localization of $\Phi_R^e$
with the Matlis dual of
\[
    F^e:H^d_{\mathfrak pR_{\mathfrak p}}(R_{\mathfrak p})
    \longrightarrow
    F_*^eH^d_{\mathfrak pR_{\mathfrak p}}(R_{\mathfrak p}).
\]
Consequently, $R$ is $F$-injective if and only if $\Phi_R$ is surjective.

For $c\in R$, write
\[
    \Phi_R^e(F_*^ec-):F_*^e\omega_R\longrightarrow\omega_R,
    \qquad
    F_*^e\eta\longmapsto\Phi_R^e(F_*^e(c\eta)).
\]
If $c\neq0$ is a parameter test element, then $R$ is $F$-rational if and only if this map is surjective for some $e\geq1$; see \cite[Proposition~2.3]{DSPS25}.


If $k$ is a perfect field of prime characteristic $p>0$, $S = k[x_1,x_2,\ldots,x_d]$ a polynomial ring over $k$, then $F_*S\cong S^{\oplus p^d} = \Span_S\{F_*x_1^{i_1}x_2^{i_2}\cdots x_d^{i_d}\mid 0\leq i_j<p\}$. If $\Phi_S:F_*S\to S$ is the dual basis element of the basis element $x_1^{p-1}\cdots x_d^{p-1}$, then $\Hom_{S}(F_*S,S) = \Span_{F_*S}\{\Phi_S\}$ serves as the Frobenius dual of the polynomial ring $S$. If $f\in S$ is a nonzerodivisor of $S$, then Fedder implicitly proves $\Phi_S(F_*f^{p-1}-)$ is a lift of the Frobenius dual of the hypersurface $S/(f)$, see \cite[Corollary Page 465]{Fed83} and \cite[Lemma~8.13]{Sta16}. The following proposition, likely known to experts, is the extension of Fedder's observation to $F$-finite Cohen-Macaulay rings with a canonical module and will allow us to explicitly examine the Frobenius dual of a quotient $R/(f)$ when we are able to describe the Frobenius dual of $R$.

\begin{proposition}
    \label{proposition: Fedder for Frobenius dual}
    Let $R$ be an $F$-finite Cohen-Macaulay ring of prime characteristic $p>0$, $\omega_R$ a global canonical module of $R$, and $\Phi_R:F_*\omega_R\to \omega_R$ an identification of the Frobenius dual of $R$. Let $g\in R$ be a nonzero divisor and identify $\omega_{R/(g)}\cong \omega_{R}/g\omega_R$. Then $\Phi_{R}(F_*g^{p-1}-)$ is a lift of the Frobenius dual $\Phi_{R/(g)}$ of $R/(g)$.
\end{proposition}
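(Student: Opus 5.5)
The plan is to dualize the diagram expressing that Frobenius on $R$ descends to $R/(g)$, exactly as in Fedder's argument. Set $\bar R=R/(g)$. Frobenius on $R$ fits into the commutative diagram
\[
0\to R\xrightarrow{\cdot g} R\to \bar R\to 0,\qquad 0\to F_*R\xrightarrow{\cdot F_*g^p} F_*R\to F_*(R/g^pR)\to 0,
\]
with the vertical maps given by $F$. The right-hand vertical map is $\bar F:\bar R\to F_*(R/g^p)$, $\bar r\mapsto \overline{r^p}$. However, the left-hand vertical map must be $r\mapsto F_*(g^{p-1}r^p)$, because $F(gr)=g^p r^p = g^p\cdot g^{-1}\cdots$. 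In other words, the correct left map is $F_*g^{p-1}\cdot F$. The Frobenius of $\bar R$ is then $\bar R\to F_*\bar R$, obtained by composing $\bar F$ with the surjection $F_*(R/g^p)\to F_*(R/g)$.

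Next I apply $\Hom_R(-,\omega_R)$. Since $g$ and $g^p$ are nonzerodivisors and $\omega_R$ is maximal Cohen--Macaulay, we have $\operatorname{Ext}^1_R(\bar R,\omega_R)\cong\omega_R/g\omega_R=\omega_{\bar R}$, and similarly $\operatorname{Ext}^1_R(F_*(R/g^p),\omega_R)\cong F_*(\omega_R/g^p\omega_R)$. Both identifications come from the connecting maps of the long exact sequences, together with the finite duality $\Hom_R(F_*R,\omega_R)\cong F_*\omega_R$. The map of short exact sequences induces a map of long exact sequences. In the portion
\[
\Hom(F_*R,\omega)\to\Hom(F_*R,\omega)\to \operatorname{Ext}^1(F_*(R/g^p),\omega)\to0,
\]
the dual of the middle vertical $F$ is $\Phi_R$, and the dual of the left vertical $F_*g^{p-1}F$ is $\Phi_R(F_*g^{p-1}-)$. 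Naturality of the connecting homomorphism then gives a commutative square: the induced map $F_*(\omega_R/g^p\omega_R)\to\omega_R/g\omega_R$ is induced by $\Phi_R(F_*g^{p-1}-)$, via the quotient $\omega_R\to\omega_R/g\omega_R$ applied after $\omega_R\xrightarrow{\cdot g}\omega_R$. This is the step where the factor $g^{p-1}$ must be tracked carefully.

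Finally, I dualize the surjection $F_*(R/g^p)\to F_*(R/g)$. Its $\operatorname{Ext}^1$-dual is the inclusion $F_*(\omega_R/g\omega_R)\hookrightarrow F_*(\omega_R/g^p\omega_R)$ given by multiplication by $g^{p-1}$. This is again seen by comparing the short exact sequences for $g$ and $g^p$, which are related by the map $\cdot g^{p-1}$ on the left copy of $R$.

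The obstacle is the bookkeeping of these two competing $g^{p-1}$ factors, so I will recheck it against the polynomial case. Resolving the maps properly means using the left vertical map $F$ and the right comparison $R/g^p\to R/g$ with the middle map being multiplication by $g^{p-1}$ on the $F_*R$ level. Composing the two dual maps, the Frobenius dual $\Phi_{\bar R}:F_*\omega_{\bar R}\to\omega_{\bar R}$ sends $F_*\bar\eta$ to the class of $\Phi_R(F_*(g^{p-1}\eta))$. This is the claimed lift, and it is well defined because $\Phi_R(F_*g^{p-1}g\omega_R)=\Phi_R(F_*g^p\omega_R)=g\Phi_R(F_*\omega_R)\subseteq g\omega_R$.

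Up to the unit ambiguity inherent in choosing the identifications, this matches the definition of the Frobenius dual. The identifications are fixed by choosing $\omega_{\bar R}\cong\operatorname{Ext}^1_R(\bar R,\omega_R)$ compatibly with finite duality, which is what "compatible choices" in the definition allows.
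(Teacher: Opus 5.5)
Your final argument is correct, and its core idea is the paper's: both dualize the morphism of short exact sequences comparing $0\to R\xrightarrow{g}R\to\bar R\to 0$ with its Frobenius pushforward. The difference is in the machinery. You factor $F_{\bar R}$ as $\bar R\xrightarrow{\bar F}F_*(R/g^pR)\twoheadrightarrow F_*\bar R$ and dualize each factor globally with $\operatorname{Ext}^1_R(-,\omega_R)$. The paper instead uses the single composite diagram, with bottom row $0\to F_*R\xrightarrow{F_*g}F_*R\to F_*\bar R\to 0$ and left vertical map $r\mapsto F_*(g^{p-1}r^p)$. It localizes and completes, and then Matlis-dualizes the induced map on $H^{\bullet}_{\fm}$.

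Your route avoids completion and local duality. In exchange, it needs two facts: $\operatorname{Ext}^1_R(F_*R,\omega_R)=0$, since $F_*R$ is maximal Cohen--Macaulay, and the isomorphism $\operatorname{Ext}^1_R(M,\omega_R)\cong\Hom_{\bar R}(M,\omega_R/g\omega_R)$, natural in $\bar R$-modules $M$. The second is what guarantees that the $\operatorname{Ext}^1$-dual of $F_{\bar R}$ really is $\Phi_{\bar R}$. You only gesture at this compatibility, roughly as much as the paper does. The two-step factorization is also unnecessary: dualizing the composite diagram gives the result in one step.

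Your first two paragraphs contradict your last one and should be rewritten. In the diagram you wrote, with bottom row $0\to F_*R\xrightarrow{F_*g^p}F_*R\to F_*(R/g^pR)\to 0$, the left vertical map is just $F$, since $F(gr)=F_*(g^pr^p)=F_*g^p\cdot F(r)$. The map $r\mapsto F_*(g^{p-1}r^p)$ is the left vertical map only for the bottom row $F_*R\xrightarrow{F_*g}F_*R$, whose cokernel is $F_*\bar R$.

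Consequently, the induced map $F_*(\omega_R/g^p\omega_R)\to\omega_R/g\omega_R$ is $F_*\bar\eta\mapsto\overline{\Phi_R(F_*\eta)}$, not $\Phi_R(F_*g^{p-1}-)$. As written, composing your second paragraph with the multiplication-by-$g^{p-1}$ inclusion from your third paragraph would give $\Phi_R(F_*g^{2p-2}-)$. Similarly, in the comparison of the sequences for $g^p$ and $g$, it is the left vertical map that is multiplication by $g^{p-1}$; the middle one is the identity. Once the first diagram's left map is corrected to $F$, the factor $g^{p-1}$ enters exactly once, and your composite is the claimed lift.
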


\begin{proof}
    The claims can be checked after localization at a maximal
    ideal containing $g$ and completion. Thus we may assume that
    $(R,\fm)$ is complete local. Put $d=\dim R$, $S = R/(g)$,
    and identify $\omega_{S}=\omega_R/g\omega_R$.

    There is a commutative diagram with exact rows
    \[
    \xymatrix{
    0\ar[r] & R\ar[r]^{\cdot g}\ar[d]^{F_*g^{p-1}}
      & R\ar[r]\ar[d]^{F} & S\ar[r]\ar[d]^{F} & 0\\
    0\ar[r] & F_*R\ar[r]^{F_*g}
      & F_*R\ar[r] & F_*S\ar[r] & 0.
    }
    \]
    Since $R$ and $S$ are Cohen--Macaulay of dimensions $d$ and $d-1$, respectively, local cohomology gives
    \[
    \xymatrix{
    0\ar[r] & H^{d-1}_{\mathfrak m}(S)\ar[r]\ar[d]^F
      & H^d_{\mathfrak m}(R)\ar[r]^{\cdot g}\ar[d]^{F_*g^{p-1}}
      & H^d_{\mathfrak m}(R)\ar[r]\ar[d]^F & 0\\
    0\ar[r] & H^{d-1}_{\mathfrak m}(F_*S)\ar[r]
      & H^d_{\mathfrak m}(F_*R)\ar[r]^{F_*g}
      & H^d_{\mathfrak m}(F_*R)\ar[r] & 0.
    }
    \]
    Under the compatible local-duality and finite-duality identifications, its Matlis dual is
    \[
    \xymatrix{
    0\ar[r] & F_*\omega_R\ar[r]^{F_*g}\ar[d]^{\Phi_R}
      & F_*\omega_R\ar[r]\ar[d]^{\Phi_R(F_*g^{p-1}-)}
      & F_*\omega_S\ar[r]\ar[d]^{\operatorname{Tr}_{S}} & 0\\
    0\ar[r] & \omega_R\ar[r]^{g}
      & \omega_R\ar[r] & \omega_S\ar[r] & 0.
    }
    \]
    The right-hand square identifies $\operatorname{Tr}_{S}$ with the map induced by $\Phi(F_*g^{p-1}-))$, as claimed. Faithful flatness of completion and localization at all maximal ideals containing $g$ give the assertion for the original ring.
\end{proof}

\section{The quasi-projective example}\label{section quasi-projective example}
Let $k=\overline{\F_2}$ and consider the polynomial rings $A:=k[t_1,\dots, t_6]$ and $B:=A[x_1,x_2,x_3]$. We label the six quadratic monomials in the $x_i$ as 
\begin{align}
    (m_1,\ldots,m_6)=(x_1^2,x_2^2,x_3^2,x_1x_2,x_1x_3,x_2x_3).
\end{align}
Let $f=y^2+\sum\limits_{i=1}^6 t_im_i^2\in B[y]$ and consider the nine-dimensional hypersurface $S=\frac{B[y]}{(f)}.$
The equation $f$ defining $S$ is homogeneous of degree four with respect to the relative grading $\deg(t_a)=0$, $\deg(x_i)=1$, and $\deg(y)=2$. Set $R$ to be the third Veronese subring
\begin{align*}
    R:=S^{(3)}=\bigoplus\limits_{n\geq 0}S_{3n}\subseteq S.
\end{align*}
For each $q=2^e$ define the ideals
\begin{align*}
    \fq:=(t_1,\ldots,t_6,x_1^3,x_2^3,x_3^3)R,\quad \fb_q=(t_1^q,\ldots,t_6^q,x_1^{3q},x_2^{3q},x_3^{3q})B,\quad \fa_{q}:=(t_1^{q},\ldots, t_6^q)A.
\end{align*}
Note also that
\begin{equation}
    S=B\oplus yB \quad \text{ and }\quad\fq^{[q]}S=\fb_q\oplus y\fb_q\label{eq:S-direct-sum}
\end{equation}
 for all $q=2^e$. Let $X=\Spec R$. In what follows, we show that $X$ is $F$-rational and admits a locally closed immersion $X\hookrightarrow \PP^{19}_k$; we then construct a dense open set $U\subseteq (\PP^{19}_k)^{\vee}$ for which $X\cap H$ is not $F$-injective for all $H\in U$.

\begin{notation}
     Let $Z = V(R_+)\subseteq X$ and identify $Z\cong \A_k^6$ with coordinates $t_1,\ldots, t_6$. For a closed point $\tau=(\tau_i)\in Z(k)$, choose values $\lambda_i\in k$ such that $\lambda_i^2=\tau_i$. Set $T_i=t_i+\tau_i$ and let $$z=y+\sum\limits_{i=1}^6\lambda_i m_i\in B[y].$$ Let $\fm_\tau=(T_1,\ldots, T_6)+R_+$ be the maximal ideal of $R$ corresponding to $\tau$.
\end{notation}

\subsection{\texorpdfstring{$X$}{X} is \texorpdfstring{$F$}{F}-rational}
In this subsection, we show that $X=\Spec(R)$ is $F$-rational. The proof proceeds by showing that $X$ is strongly $F$-regular away from the set $Z$. Then we find a tightly closed ideal $\fq\subseteq R$ generated by a system of parameters, which shows that the local ring at the origin is $F$-rational. A degree-preserving involution of $R$ then shows that $X$ is $F$-rational on $Z$.

\begin{lemma}The ring $R$ enjoys the following properties.\label{lem:SFR-away-from-Z}
    \begin{enumerate}[label=(\alph*)]
        \item\label{lem:SFR-away-from-Z algebra generators} The ring $R$ is generated over $A$ by the degree three component $S_3$; that is, by the thirteen elements
        \begin{equation}
            \{x_1^{i_1}x_2^{i_2}x_3^{i_3}\mid i_1+i_2+i_3=3, 0\leq i_j\}\cup \{yx_1,yx_2,yx_3\}.\label{eq:A-algebra-gens}
        \end{equation}
        \item\label{lem:SFR-away-from-Z 9-dimensional} $R$ is a nine-dimensional Cohen--Macaulay normal domain.\label{lem:SFR-away-from-Z-CM}
        \item\label{lem:SFR-away-from-Z SFR away from Z} For all prime ideals $\fp\in X\smallsetminus Z$, the local ring $R_\fp$ is strongly $F$-regular.\label{lem:SFR-away-from-Z-SFR}
    \end{enumerate}
\end{lemma}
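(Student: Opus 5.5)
The plan is to deduce everything about $R$ from the structure of $S=B\oplus yB$ and from $R$ being a Veronese-type direct summand of $S$.

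\textbf{Part (a).} Since $S=B\oplus yB$ with $\deg y=2$, the component $S_{3n}$ is spanned over $A$ by monomials $x^\alpha$ with $|\alpha|=3n$ and $yx^\beta$ with $|\beta|=3n-2$. First I would check that $S_3$ consists of the ten cubic monomials together with $yx_1,yx_2,yx_3$. Next I would show $S_{3n}=S_3\cdot S_{3n-3}$ for $n\ge 2$. A monomial $x^\alpha$ factors off a cubic monomial. For $yx^\beta$ with $|\beta|=3n-2\ge 4$, write $x^\beta=x_i\cdot x^{\beta'}$ with $|\beta'|=3n-3$, so that $yx^\beta=(yx_i)x^{\beta'}$. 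Degree reasons force exactly one factor to carry $y$ anyway, since $y^2$ is rewritten in $B$ via $f$. By induction, $R$ is generated over $A$ by $S_3$.

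\textbf{Part (b).} The ring $S$ is a hypersurface, hence Cohen--Macaulay of dimension $10-1=9$. To show $S$ is a domain, note that $f=y^2+g$ with $g=\sum t_im_i^2\in B$. Then $f$ is irreducible iff $g$ is not a square in $\Frac(B)$. This holds because $g$ is linear in $t_1$ with coefficient $x_1^4\ne 0$, so $\partial g/\partial t_1\neq 0$ and $g\notin\Frac(B)^2$.

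For normality I would use Serre's criterion. $S$ is $S_2$ as a hypersurface, so it remains to check $R_1$. The partials are $\partial f/\partial t_i=m_i^2$ and $\partial f/\partial y=0$, while the $x$-partials vanish in characteristic $2$. Hence the singular locus is contained in $V(y^2+g, m_1^2,\dots,m_6^2)=V(x_1,x_2,x_3,y)$, which has codimension $3$ in $S$. So $S$ is a normal domain.

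The Veronese $R=S^{(3)}$ is a direct summand of $S$ as an $R$-module via the degree projection, so $R$ is a normal domain of the same dimension $9$. For Cohen--Macaulayness I would use that $S^{(3)}$ is a module-finite $\mathbb Z/3$-invariant subring: the group $\mu_3\subseteq k^\times$ acts with weights given by the grading. Since $3$ is invertible in characteristic $2$, $R=S^{\mu_3}$ is a direct summand of the Cohen--Macaulay ring $S$. Hochster--Eagon / Hochster--Roberts for linearly reductive group actions then gives that $R$ is Cohen--Macaulay. Equivalently, one can argue via the graded pieces of local cohomology, $H^9_{\fm}(R)=[H^9_{\fm S}(S)]_{3\mathbb Z}$, with the vanishing of lower local cohomology inherited the same way.

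\textbf{Part (c).} Strong $F$-regularity descends to direct summands, and $R\to S$ is a $\mu_3$-invariant split inclusion that is étale in codimension… so it is cleaner to localize. Let $\fp\notin Z$. Then some generator in $R_+$ lies outside $\fp$: either a cubic monomial in the $x_i$, or some $yx_i$. In each case some $x_i$ or $y$ is a unit in $S_\fp$, where $S_\fp$ denotes $S$ localized at $R\smallsetminus\fp$.

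If $S_{x_i}$ is regular, I am done. This holds because the singular locus of $S$ lies in $V(x_1,x_2,x_3)$, so $S_{x_i}$ is regular. If instead $y$ is a unit but all $x_i\in\fp$, then $y^2=-g\in (x)$, so $y\in\sqrt{\fp S}$, which is a contradiction. Hence $S\otimes_R R_\fp$ is regular, and $R_\fp$ is a direct summand of it, so $R_\fp$ is strongly $F$-regular. The direct-summand property survives localization since the splitting is $R$-linear.

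The main obstacle I expect is the careful verification of the singular locus, in particular confirming that the Jacobian ideal really forces all $x_i=0$. Here one must use that the $m_i^2$ include $x_1^4,x_2^4,x_3^4$.
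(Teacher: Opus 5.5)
Your proposal is correct. Part (a), the domain and normality portion of (b), and part (c) follow the paper's proof essentially verbatim:
\begin{itemize}
\item the decomposition $S=B\oplus yB$ gives the generators;
\item $\partial/\partial t_1$ shows $\sum t_im_i^2$ is not a square, so $f$ is irreducible;
\item the Jacobian computation gives singular locus $V_S(x_1,x_2,x_3,y)$ of codimension three;
\item normality passes from $S$ to its summand $R$;
\item $R_\fp$ is strongly $F$-regular as a direct summand of a localization of the regular ring $S[x_i^{-1}]$.
\end{itemize}

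The genuine difference is how you get Cohen--Macaulayness and $\dim R=9$. The paper notes that $S$ is free over $P=A[x_1^3,x_2^3,x_3^3]$ on the basis $y^\ell x^a$ with $\ell\in\{0,1\}$ and $0\le a_i\le 2$. Hence $R$ is free over $P$ on the basis elements of degree divisible by three, which gives both claims in one elementary step. You instead apply Hochster--Eagon to the $\mu_3$-action. That is exactly how the paper itself treats the charts $R_a$ in Section 4, where no transparent free basis is available. Your route is more conceptual and generalizes; the paper's is self-contained here.

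Be precise about why your route works. $\mu_3$ is finite of order invertible in $k$, so $S$ is module-finite over $R=S^{\mu_3}$. Hence $S$ is a maximal Cohen--Macaulay $R$-module with $R$ as a direct summand. Hochster--Roberts is not the relevant result, since it needs a regular ambient ring. Linear reductivity alone would not suffice for a non-regular Cohen--Macaulay ring: Segre products arise as torus invariants of Cohen--Macaulay rings without being Cohen--Macaulay.

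In (c), your case ``$y$ is a unit but all $x_i\in\fp$'' is vacuous, and as phrased it does not parse, since $x_i\notin R$. Every one of the thirteen generators of $R_+$ is divisible in $S$ by some $x_i$. So some $x_i$ is always a unit in $S\otimes_R R_\fp$; this is the paper's observation that $D(x_1^3)$, $D(x_2^3)$, $D(x_3^3)$ cover $X\smallsetminus Z$.
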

\begin{proof}
    Since $S=B\oplus yB$, we see that $R$ is spanned over $A$ by the monomials $x_1^{a_1}x_2^{a_2}x_3^{a_3}$ where $a_1+a_2+a_3\equiv 0\bmod 3$, together with $yx_1^{a_1}x_2^{a_2}x_3^{a_3}$ where $a_1+a_2+a_3\equiv 1\bmod 3$. Monomials of the first type are products of cubic monomials in the $x_i$, and those of the second type are products of some $yx_i$ and cubic monomials in the $x_i$. This proves \ref{lem:SFR-away-from-Z algebra generators}. It follows that $R$ is nine-dimensional and Cohen--Macaulay because $R$ is free over $A[x_1^3,x_2^3,x_3^3]$.

    Observe that $\frac{\partial}{\partial t_1}\left(\sum\limits_{j=1}^6 t_j m_j^2\right)=x_1^4$ so $\sum\limits_{j=1}^6 t_j m_j^2$ is not a square in $\Frac(B)$. It follows that $f$ is irreducible so that $S$ is a domain. Since 
    \begin{align*}
        \frac{\partial}{\partial t_j}f=m_j^2, \quad \frac{\partial}{\partial x_i}f=\frac{\partial}{\partial y}f=0,
    \end{align*}
    we see that the singular locus of $S$ is given by $V_S(x_1,x_2,x_3,y)$. Since this has codimension three, we see that $S$ is normal and hence so too is $R$; this proves \ref{lem:SFR-away-from-Z-CM}. Note that $R_+$ is prime, so $\sqrt{(x_1^3,x_2^3,x_3^3)R}=R_+$ and $X\smallsetminus Z$ is covered by $D_R(x_1^3), D_R(x_2^3)$, and $D_R(x_3^3)$. Finally, each $S[x_i^{-1}]$ is regular and contains $R[x_i^{-3}]$ as a direct summand. It follows (e.g. by \cite[Theorem 3.9]{MP25}) that $R[x_i^{-3}]$ is strongly $F$-regular, as claimed in \ref{lem:SFR-away-from-Z-SFR}.
\end{proof}

\begin{lemma}\label{lem: closed-point-isomorphism}
    For each closed point $\tau\in Z(k)$, we have an isomorphism of local rings $R_{\fm_\tau}\cong R_{\fm_0}$.
\end{lemma}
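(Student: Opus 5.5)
We will construct a degree-preserving $k$-algebra automorphism $\sigma$ of $S$ (hence of $R=S^{(3)}$) with $\sigma(\fm_0)=\fm_\tau$; localizing then gives $R_{\fm_0}\cong R_{\fm_\tau}$. The notation already suggests the map. Since $\operatorname{char} k=2$ and $\lambda_i^2=\tau_i$, we have
\[
z^2=y^2+\sum_{i=1}^6\lambda_i^2m_i^2=y^2+\sum_{i=1}^6\tau_i m_i^2,
\]
and therefore
\[
f=y^2+\sum_i t_im_i^2=z^2+\sum_i (t_i+\tau_i)m_i^2=z^2+\sum_i T_im_i^2 .
\]
So $f$ has the same shape in the variables $(T,x,z)$ as in $(t,x,y)$.

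Define the $k$-algebra endomorphism $\sigma$ of $B[y]=k[t_1,\dots,t_6,x_1,x_2,x_3,y]$ by
\[
t_i\mapsto t_i+\tau_i=T_i,\qquad x_i\mapsto x_i,\qquad y\mapsto y+\sum_i\lambda_im_i=z .
\]
It is an automorphism, with inverse $t_i\mapsto t_i-\tau_i=t_i+\tau_i$ and $y\mapsto y+\sum\lambda_im_i$; in characteristic $2$, $\sigma$ is in fact an involution. By the display above, $\sigma(f)=f$, so $\sigma$ descends to an automorphism of $S=B[y]/(f)$.

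Next we check the grading. With $\deg t_i=0$, $\deg x_i=1$, $\deg y=2$, the images $T_i$, $x_i$, $z$ are homogeneous of degrees $0,1,2$, because each $m_i$ is quadratic in the $x$'s and $\tau_i,\lambda_i\in k$. Thus $\sigma$ preserves each graded piece $S_n$, restricts to a graded automorphism of $R=\bigoplus S_{3n}$, and preserves $R_+$.

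Finally, $\fm_0=(t_1,\dots,t_6)+R_+$, so $\sigma(\fm_0)=(T_1,\dots,T_6)+R_+=\fm_\tau$. An automorphism carrying one maximal ideal to another induces an isomorphism of the localizations, giving $R_{\fm_0}\cong R_{\fm_\tau}$. (Here $T_i=t_i+\tau_i=t_i-\tau_i$ vanishes exactly at $\tau$, consistent with the notation.) There is no real obstacle; the only point needing care is the characteristic-$2$ identity $z^2=y^2+\sum\tau_im_i^2$, where the cross terms vanish because $2=0$.
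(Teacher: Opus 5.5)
Your proof is correct and is essentially the paper's argument: the paper uses the same involution $t_i\mapsto T_i$, $x_i\mapsto x_i$, $y\mapsto z$ of $B[y]$, notes that it fixes $f$, and concludes from $\Phi_\tau(\fm_0)=\fm_\tau$. You also spell out that the map is graded, so it restricts to $R=S^{(3)}$ and preserves $R_+$; the paper leaves this step implicit.
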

\begin{proof}
    Note that $$z^2\equiv \sum\limits_{i=1}^6 T_i m_i^2\bmod (f).$$ Then the involution $\Phi_\tau:B[y]\to B[y]$ defined by
    \begin{align*}
        \Phi_{\tau}(t_i)=T_i,\quad \Phi_{\tau}(x_i)=x_i,\quad \Phi_{\tau}(y)=z.
    \end{align*}
    descends to $S$ since $\Phi_\tau(f)=f$. The stated isomorphism then follows from $\Phi_\tau(\fm_0)=\fm_\tau$
\end{proof}
We therefore work primarily with the local ring $R_{\fm_0}$ at the origin of $Z$. We next aim to show that $R_{\fm_0}$ is $F$-rational by showing that the socle $\Soc\left(\frac{R_{\fm_0}}{\fq R_{\fm_0}}\right)$ has trivial intersection with the tight closure $\frac{(\fq R_{\fm_0})^*}{\fq R_{\fm_0}}$.

\begin{definition}
    Let $w:=x_1^2x_2^2x_3^2\in B$ and for each $1\leq i\leq 6$ let $w_i:=\frac{w}{m_i}\in B$.
\end{definition}
\begin{lemma}
    The ideal $\fq$ is a parameter ideal, and the socle $\Soc\left(\frac{R}{\fq}\right)$ is a seven-dimensional $k$-vector space with basis given by the classes of $w,yw_1,\ldots, yw_6.$ In particular, the same property is true of $\Soc\left(\frac{R_{\fm_0}}{\fq R_{\fm_0}}\right)$.
\end{lemma}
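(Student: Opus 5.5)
The plan is to reduce everything to monomial bookkeeping over the free $A$-basis of $S$. By \eqref{eq:S-direct-sum}, $S=B\oplus yB$. Moreover $R=S^{(3)}$ is spanned over $A$ by the monomials $x^a$ with $|a|\equiv 0 \bmod 3$ and $yx^a$ with $|a|\equiv 1\bmod 3$. Hence $R$ is a free module over the polynomial ring $P:=A[x_1^3,x_2^3,x_3^3]$, with basis
\[
\{x^a : 0\le a_j\le 2,\ |a|\equiv 0 \bmod 3\}\cup\{yx^a : 0\le a_j\le 2,\ |a|\equiv 1\bmod 3\}.
\]
Counting exponent vectors in $\{0,1,2\}^3$ by total degree (degrees $0,\dots,6$ occur $1,3,6,7,6,3,1$ times), the first set has $1+7+1=9$ elements and the second has $3+6=9$ elements. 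Since $\fq$ is generated by the nine variables of $P$, it is generated by a system of parameters, as $R$ is finite over $P$ of dimension nine. Because $R$ is $P$-free, $R/\fq$ is the $k$-vector space with the eighteen basis classes above, and it is graded by the $x$-degree (with $\deg y=2$) together with the $y$-degree.

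To find the socle, I will compute multiplication by the thirteen $A$-algebra generators of \cref{lem:SFR-away-from-Z}\ref{lem:SFR-away-from-Z algebra generators} on $R/\fq$. The $t_i$ act as zero. The pure cubic monomials $x^b$ act on $x^a$ and $yx^a$ by adding exponents, killing the result modulo $\fq$ as soon as some exponent reaches $3$. The generators $yx_i$ are the only subtle part, because $y^2=\sum t_j m_j^2\in \fq$ in $R$. Hence $yx_i\cdot yx^a\in\fq$, while $yx_i\cdot x^a=yx^{a+e_i}$, which vanishes unless all exponents stay at most $2$.

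With these rules the socle of the $B$-part is spanned by $w=x_1^2x_2^2x_3^2$. Indeed, every $x^a$ with $|a|\in\{0,3\}$ is sent by $yx_i$ to a nonzero basis element $yx^{a+e_i}$ for some $i$. The top class $w$ is killed by everything: cubics push some exponent to $3$, and $yx_i w$ has $x_i^3$. For the $y$-part, any $yx^a$ with $|a|=1$ is moved by a suitable cubic to a $yx^{a'}$ with $|a'|=4$ and $a'\le 2$ componentwise, so it is not in the socle. Each $yx^a$ with $|a|=4$ and all $a_j\le 2$ is killed by every cubic and by every $yx_i$. These six elements are exactly $yw_1,\dots,yw_6$ with $w_i=w/m_i$.

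The one point needing care is that the socle is not just the span of basis elements, so I must also handle linear combinations. Since multiplication by each generator is homogeneous for the bigrading and sends distinct basis monomials to distinct basis monomials (or zero), a combination lies in the socle if and only if each monomial in it does. I expect this to be the main thing to check rigorously. It follows from injectivity of $x^a\mapsto x^{a+b}$ on the monomials that survive.

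Finally, $R/\fq$ is supported at $\fm_0$, since $\sqrt{\fq}=\fm_0$: $R_+$ is the radical of $(x_1^3,x_2^3,x_3^3)R$. Hence $R/\fq\cong R_{\fm_0}/\fq R_{\fm_0}$, and the socle statement transfers to the localization.
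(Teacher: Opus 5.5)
Your proposal is correct and takes essentially the paper's route. Both arguments produce the eighteen-element monomial $k$-basis of $R/\fq$: you get it from freeness of $R$ over $A[x_1^3,x_2^3,x_3^3]$, while the paper identifies $R/\fq$ with the part of $E=S/\fq S$ in degrees divisible by three. Both then test socle membership monomial by monomial against the thirteen degree-three generators.

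Your remark that each generator sends distinct surviving monomials to distinct monomials makes explicit a step the paper leaves to ``reasoning by degrees'': a combination lies in the socle iff each of its monomials does. One small correction: $y^2\notin R$, so the accurate statement is $yx_i\cdot yx^a=\sum_j t_j m_j^2x^{a+e_i}\in(t_1,\dots,t_6)R$.
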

\begin{proof}
    Note that $S/\fq S\cong E:=\frac{k[x_1,x_2,x_3,y]}{(x_1^3,x_2^3,x_3^3,y^2)}$, and the projection $S\to R$ identifies $R/\fq$ with the direct sum of the homogeneous elements of $E$ with degrees divisible by three. The quotient $R/\fq$ then admits a monomial $k$-basis given by $$\mathcal{B}=\{y^\ell x_1^{a_1}x_2^{a_2}x_3^{a_3}\mid 0\leq a_i\leq 2, \ell\in\{0,1\}, a_1+a_2+a_3+2\ell\equiv 0\bmod 3\}.$$ In particular, $R/\fq$ is artinian and $\sqrt{\fq}=\fm_0$. It is straightforward to verify that $w$ and $yw_i$ annihilate the maximal ideal of $R/\fq$. Conversely, reasoning by degrees, if $x_1^{a_1}x_2^{a_2}x_3^{a_3}\in\Soc(R/\fq)\cap\mathcal{B}$ then $a_1=a_2=a_3=2$. If $yx_1^{a_1}x_2^{a_2}x_3^{a_3}\in\Soc(R/\fq)\cap\mathcal{B}$ then the bounds force $a_1+a_2+a_3=4$, so the claim follows.
\end{proof}
In proving that $R$ is $F$-rational, we'll need the following identity.
\begin{lemma}\label{lem:congruence}
    For each $1\leq i\leq 6$ and each $q=2^e\geq 2$, we have $(yw_i)^q\equiv t_i^{q/2}w^q\bmod (\fb_q,f)B[y]$.
\end{lemma}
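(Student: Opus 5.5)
We compute directly in $B[y]/(f)$, using that $y^2\equiv \sum_j t_j m_j^2 \pmod f$ (characteristic $2$). Since $q=2^e\geq 2$ is even, we have
\[
(yw_i)^q = (y^2)^{q/2} w_i^q \equiv \Big(\sum_{j=1}^6 t_j m_j^2\Big)^{q/2} w_i^q \pmod{f}.
\]
Because $q/2$ is a power of $2$, the Frobenius is additive, so
\[
\Big(\sum_{j} t_j m_j^2\Big)^{q/2}=\sum_{j} t_j^{q/2} m_j^{q}.
\]
Consequently
\[
(yw_i)^q\equiv \sum_{j=1}^6 t_j^{q/2} m_j^q w_i^q \pmod{f}.
\]

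The $j=i$ term is $t_i^{q/2}m_i^qw_i^q=t_i^{q/2}w^q$, since $m_iw_i=w$. It therefore suffices to show that for each $j\neq i$ the monomial $m_jw_i$ is divisible by some $x_l^3$. Then $(m_jw_i)^q$ is divisible by $x_l^{3q}\in\fb_q$, and so the $j$-th term lies in $\fb_q B[y]$.

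This divisibility is a finite check on exponents. Write $w_i=w/m_i$, where $w=x_1^2x_2^2x_3^2$. Then $m_jw_i=w\,m_j/m_i$, and its exponent vector is $(2,2,2)+\deg(m_j)-\deg(m_i)$ with entries summing to $6$. The condition fails only if every entry is at most $2$. Since the entries sum to $6$, that forces every entry to equal $2$, i.e.\ $\deg(m_j)=\deg(m_i)$, i.e.\ $j=i$. So for $j\neq i$ some exponent is at least $3$, which is exactly what we need.

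Assembling these steps gives the congruence modulo $(\fb_q,f)B[y]$. The only step needing any care is this exponent check; it is elementary, so I expect no real obstacle.
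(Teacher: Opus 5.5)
Your proof is correct and follows the paper's argument: reduce $(y^2)^{q/2}$ modulo $f$ using additivity of Frobenius, keep the $j=i$ term $t_i^{q/2}w^q$, and discard the $j\neq i$ terms into $\fb_q$ because $m_jw_i$ is divisible by some $x_l^3$. Your exponent-sum argument for that divisibility fills in a step the paper only asserts.
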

\begin{proof}
Fix $1\leq i\leq 6$. When $j\neq i$, the monomial $m_j w_i$ is divisible by some $x_r^3$. Note then that
\begin{align*}
    (yw_i)^q&\equiv \sum\limits_{j=1}^6 t_j^{q/2}(m_jw_i)^q\bmod fB[y]\\
    &\equiv t_i^{q/2} (m_i w_i)^q\bmod \fb_q\\
    &=t_i^{q/2} w^q. \qedhere
\end{align*}
\end{proof}
\begin{lemma}\label{lem:tight-closure}
    Let $0\neq g\in B$ and let $(\alpha_0,\alpha_1,\ldots, \alpha_6)\in k^7\smallsetminus\{0\}$. Then for all sufficiently large $q=2^e\gg 0$, we have 
\begin{equation}
gw^q\left(\alpha_0^q+\sum\limits_{i=1}^6\alpha_i^q t_i^{q/2}\right)\not\in \fb_q.\label{eq:deg-lemma}
\end{equation}
\end{lemma}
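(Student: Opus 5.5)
The plan is to exploit the fact that $\fb_q=(t_1^q,\ldots,t_6^q,x_1^{3q},x_2^{3q},x_3^{3q})B$ is a monomial ideal. Membership in $\fb_q$ can then be tested monomial by monomial, and it suffices to exhibit a single monomial of $g\,w^q P_q$ that survives, where
\[
P_q:=\alpha_0^q+\sum_{i=1}^6\alpha_i^q t_i^{q/2}.
\]

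\textbf{Step 1: reduce to a statement about $gP_q$.} Since $w^q=x_1^{2q}x_2^{2q}x_3^{2q}$, a monomial $t^{a}x^{b}w^q$ lies in $\fb_q$ if and only if some $a_i\geq q$ or some $b_j+2q\geq 3q$, i.e. some $b_j\geq q$. Hence
\[
g\,w^qP_q\notin\fb_q
\]
as soon as the polynomial $gP_q\in B$ has a monomial with nonzero coefficient all of whose exponents (in every $t_i$ and every $x_j$) are $<q$. Multiplication by $w^q$ is injective on monomials, so no cancellation can occur at this step.

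\textbf{Step 2: locate such a monomial via a term order.} Fix a graded monomial order on $B$, and use the fact that in a polynomial ring the \emph{smallest} term of a product is the product of the smallest terms of the factors. The coefficients multiply, and $B$ is a domain, so this product is nonzero. The smallest term of $P_q$ is determined as follows.
\begin{itemize}
\item If $\alpha_0\neq 0$, it is the constant $\alpha_0^q$.
\item Otherwise it is $\alpha_i^qt_i^{q/2}$ for the index $i$ with $\alpha_i\neq0$ whose $t_i^{q/2}$ is smallest. Such an $i$ exists since $(\alpha_0,\ldots,\alpha_6)\neq0$. Note that $\alpha_i^q\neq0$ because Frobenius is injective on $k$.
\end{itemize}
Let $c\,\mu$ be the smallest term of $g$, and let $D$ be the largest exponent of any variable appearing in $g$. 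Then $gP_q$ contains the monomial $\mu$ or $\mu t_i^{q/2}$ with nonzero coefficient $c\alpha_0^q$ or $c\alpha_i^q$, respectively. Every exponent of this monomial is at most $D+q/2$.

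\textbf{Step 3: conclude for large $q$.} For $q>2D$ we have $D+q/2<q$. Step 1 then gives \eqref{eq:deg-lemma} for all $q=2^e>2D$.

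\textbf{Main obstacle.} The only genuine point is ensuring that the chosen monomial of $gP_q$ is not cancelled by other products of terms. This is exactly what the multiplicativity of the minimal term under a monomial order handles. The remaining work is routine bookkeeping with the monomial ideal $\fb_q$.
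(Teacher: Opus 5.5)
Your proof is correct. Your Step 1 is the same reduction the paper makes, $(\fb_q:w^q)=(t_1^q,\ldots,t_6^q,x_1^q,x_2^q,x_3^q)B$, so in both arguments the task becomes showing $gP_q\notin(t_1^q,\ldots,t_6^q,x_1^q,x_2^q,x_3^q)B$.

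The routes differ in this second half. The paper expands $g=\sum h_{a_1a_2a_3}x_1^{a_1}x_2^{a_2}x_3^{a_3}$ with $h_{a_1a_2a_3}\in A$. It uses the fact that the $x$-monomials with exponents $<q$ form a free $A/\fa_q$-basis to deduce $h_{a_1a_2a_3}P_q\in\fa_q$. It then splits into two cases to contradict $\deg_{t_i}h<q/2$: when $\alpha_0\neq 0$ it reduces modulo $\fa_{q/2}$, and when $\alpha_0=0$ it computes $(I:_A t_{i_0}^{q/2})=\fa_{q/2}$ for $I=(t_{i_0}^q,t_i^{q/2}\mid i\neq i_0)$. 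You instead exhibit a surviving monomial directly, namely the trailing term under a monomial order. Your two cases for the smallest term of $P_q$ mirror the paper's case split.

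You can drop the term order entirely. For $q>2D$, every monomial in the support of $gP_q$ already has all exponents at most $D+q/2<q$. So all you need is $gP_q\neq 0$. This holds because $B$ is a domain and $P_q\neq 0$: its monomials $1,t_1^{q/2},\ldots,t_6^{q/2}$ are distinct and some coefficient $\alpha_i^q$ is nonzero. The cancellation you flag as the main obstacle is therefore a non-issue. Your route is shorter and more transparent than the paper's colon-ideal bookkeeping, and loses nothing.
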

\begin{proof}
    First choose $q=2^e\gg 0$ so that $\deg_{t_i}(g)<\frac{q}{2}$ and $\deg_{x_j}(g)<\frac{q}{2}$ for each $1\leq i\leq 6$ and each $j=1,2,3$. We may therefore write 
    \begin{equation}
        g=\sum\limits_{0\leq a_1,a_2,a_3<q}h_{a_1a_2a_3}x_1^{a_1}x_2^{a_2}x_3^{a_3} \label{eq:deg-lemma-g}
    \end{equation}
    where at least one $h_{a_1a_2a_3}\in A$ is nonzero. Suppose towards a contradiction the left-hand side of \eqref{eq:deg-lemma} was contained in $\fb_q$. Since $\fb_q$ is a monomial ideal, we see that
    $$(\fb_q:w^q)=(t_1^q,\ldots, t_6^q,x_1^q,x_2^q,x_3^q)B$$ so that
    \begin{equation}
g\left(\alpha_0^q+\sum\limits_{i=1}^6\alpha_i^q t_i^{q/2}\right)\in (t_1^q,\ldots, t_6^q,x_1^q,x_2^q,x_3^q)B.\label{eq:deg-lemma-g-2}
    \end{equation}
    Observe that the images of the monomials $$\{x_1^{a_1}x_2^{a_2}x_3^{a_3}\mid 0\leq a_1,a_2,a_3<q\}$$ form a free $A/\fa_q$-basis for $\frac{B}{(t_1^q,\ldots, t_6^q,x_1^q,x_2^q,x_3^q)}$. It then follows from \eqref{eq:deg-lemma-g-2} that 
    \begin{equation}
h_{a_1a_2a_3}\left(\alpha_0^q+\sum\limits_{i=1}^6\alpha_i^q t_i^{q/2}\right)\in\fa_q\label{eq:in-aq}
    \end{equation} for every $0\leq a_1,a_2,a_3<q$. Consider now the $k$-basis of monomials $$\{t_1^{b_1}\cdots t_6^{b_6}\mid 0\leq b_i<q/2\}$$ for $\overline{A}:=\frac{A}{\fa_{q/2}}$. Letting $a_1,a_2,a_3$ be such that $h:=h_{a_1a_2a_3}\neq 0$, it follows from the degree bound $\deg_{t_i}(h)<q/2$ that the image of $h$ in $\overline{A}$ is nonzero. We now divide into cases according to whether $\alpha_0=0$.

When $\alpha_0\neq 0$, \eqref{eq:in-aq} shows that $\alpha_0^q h\in \fa_{q/2}$ which contradicts the previous paragraph. Now suppose $\alpha_0=0$ and choose $i_0$ so that $\alpha_{i_0}\neq 0$. Consider the ideal $$I:=(t_{i_0}^q,t_i^{q/2}\mid i\neq i_0).$$ Note that $\fa_q\subseteq I$ and $$\alpha_0^q+\sum\limits_{i=1}^6\alpha_i^q t_i^{q/2}\equiv \alpha_{i_0}^qt_{i_0}^{q/2}\bmod I$$ so we obtain by \eqref{eq:in-aq} that $h\in (I:_A t_{i_0}^{q/2})=\fa_{q/2}$ which again contradicts $\deg_{t_i}(h)<q/2$.
\end{proof}
    
\begin{theorem}
    The parameter ideals $\fq\subseteq R$ and $\fq R_{\fm_0}$ are tightly closed. Moreover, $R$ is $F$-rational.
\end{theorem}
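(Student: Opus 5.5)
Since $R/\fq$ is artinian with socle spanned by $w,yw_1,\ldots,yw_6$, a nonzero element of $\fq^*/\fq$ would contain a nonzero socle element in its submodule. So to show $\fq=\fq^*$ it suffices to check that no nonzero socle element lies in $\fq^*$. Suppose then that $u=\alpha_0w+\sum_i\alpha_i yw_i$ with $(\alpha_0,\ldots,\alpha_6)\neq 0$, and that there is $0\neq c\in R$ with $cu^q\in\fq^{[q]}$ for all $q\gg0$.

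\emph{Step 1: reduce to $B$.} Since $q\geq 2$ is a power of $2$, we have $u^q=\alpha_0^qw^q+\sum_i\alpha_i^q(yw_i)^q$. By \cref{lem:congruence}, modulo $(\fb_q,f)$ this becomes
\[
u^q\equiv w^q\Big(\alpha_0^q+\sum_i\alpha_i^qt_i^{q/2}\Big).
\]
Write $c=c_0+yc_1$ with $c_0,c_1\in B$, using $S=B\oplus yB$ from \eqref{eq:S-direct-sum}. Then $cu^q\in\fq^{[q]}S=\fb_q\oplus y\fb_q$. Taking components, both $c_0$ and $c_1$ multiplied by $w^q(\alpha_0^q+\sum\alpha_i^qt_i^{q/2})$ lie in $\fb_q$. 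Here we note that $w^q(\ldots)$ lies in $B$, and that multiplication by $y$ preserves the decomposition, because $y\cdot yB=y^2B\subseteq B$ modulo $f$, which is fine since $\fb_q$ is extended from $B$.

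\emph{Step 2: conclude.} Choose $g\in\{c_0,c_1\}$ nonzero. \cref{lem:tight-closure} says this membership fails for $q\gg0$, which is a contradiction. Hence $\fq^*=\fq$ in $R$.

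\emph{Step 3: the local ring.} For $\fq R_{\fm_0}$ the same argument applies. Because $\fq$ is $\fm_0$-primary, $R_{\fm_0}/\fq R_{\fm_0}=R/\fq$, and the socles agree. A test element $c/s$ in $R_{\fm_0}$ can be replaced by $c$, since $s$ is a unit modulo the $\fm_0$-primary ideal $\fq^{[q]}$. Membership in $\fq^{[q]}R_{\fm_0}$ then equals membership in $\fq^{[q]}$, since that ideal is $\fm_0$-primary.

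\emph{Step 4: global $F$-rationality.} $R_{\fm_0}$ is Cohen--Macaulay by \cref{lem:SFR-away-from-Z} and has a tightly closed parameter ideal, so it is $F$-rational. By \cref{lem: closed-point-isomorphism} every $R_{\fm_\tau}$ is $F$-rational. Every prime in $Z$ lies in some $\fm_\tau$, and $F$-rationality localizes, so $R_\fp$ is $F$-rational for these primes. Primes off $Z$ give strongly $F$-regular, hence $F$-rational, local rings by \cref{lem:SFR-away-from-Z}.

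\emph{Main obstacle.} The delicate point is the bookkeeping in Step 1. One must check that the reduction modulo $f$ and $\fb_q$ in \cref{lem:congruence} is compatible with the direct-sum decomposition, so that the $y$-components of $cu^q$ genuinely land in $\fb_q$.
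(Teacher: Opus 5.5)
Your proposal is correct and follows essentially the same route as the paper. You use the Frobenius-power congruence of \cref{lem:congruence}, split along $\fq^{[q]}S=\fb_q\oplus y\fb_q$ to get conditions on $c_0,c_1\in B$, derive a contradiction from \cref{lem:tight-closure}, and then globalize via $R_{\fm_\tau}\cong R_{\fm_0}$ together with strong $F$-regularity off $Z$. The only deviation is your Step 3, where you replace the paper's citation that tight closure of parameter ideals commutes with localization by the direct observation that $\fq^{[q]}$ is $\fm_0$-primary, so that $\fq^{[q]}R_{\fm_0}\cap R=\fq^{[q]}$ and $R/\fq\cong R_{\fm_0}/\fq R_{\fm_0}$. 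That argument is valid and slightly more self-contained. Your side remark about $y\cdot yB$ is unnecessary, since $w^q(\alpha_0^q+\sum_i\alpha_i^q t_i^{q/2})$ already lies in $B$.
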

\begin{proof}
    Let $\eta=\alpha_0w+\sum\limits_{i=1}^6\alpha_i yw_i\in B[y]$ represent a nonzero element of $\Soc(R/\fq)$, and use the same notation for its image in $R$. Suppose that $\eta\in\fq^*\subseteq R$. Then there exists $0\neq c\in R$ such that $c\eta^q\in\fq^{[q]}$ for all $q=2^e\gg 0$. Write $c=c_0+yc_1$ where at least one $c_j\in B$ is nonzero. By \cref{lem:congruence} we see
    \begin{equation}
        \eta^q\equiv w^q\left(\alpha_0^q+\sum\limits_{i=1}^6\alpha_i^q t_i^{q/2}\right)\bmod (\fb_q,f)B[y]\label{eq:eqn-q-power-sum}
    \end{equation}
    so in particular both sides of \eqref{eq:eqn-q-power-sum} may be viewed as elements of $B$. From \eqref{eq:S-direct-sum} it then follows that for $j=0,1$ we have
    \begin{equation*}
c_jw^q\left(\alpha_0^q+\sum\limits_{i=1}^6\alpha_i^qt_i^{q/2}\right)\in \fb_q.
    \end{equation*} This contradicts \cref{lem:tight-closure} and since $R/\fq$ is artinian we conclude that $\fq^*=\fq$. Since tight closure commutes with localization for parameter ideals (see for example \cite[Proposition 4.14]{HH90}), it follows that $(\fq R_{\fm_0})^*=(\fq R_{\fm_0})$ so that $R_{\fm_0}$ is $F$-rational. Then $R_{\fm_\tau}$ is $F$-rational for all closed points $\tau\in Z(k)$ by \cref{lem: closed-point-isomorphism}. Since $F$-rationality localizes, $R_\fp$ is $F$-rational whenever $\fp\subseteq \fm_\tau$ for some $\tau\in Z(k)$. Finally, $R_\fp$ is strongly $F$-regular for all $\fp\in X\smallsetminus Z$ by \cref{lem:SFR-away-from-Z} so that $R$ is $F$-rational.
\end{proof}

\subsection{\texorpdfstring{$X\cap H$}{X∩H} is not \texorpdfstring{$F$}{F}-injective for general \texorpdfstring{$H\subseteq \PP^{19}_k$}{H⊆ℙ¹⁹ₖ}}

We first modify the generating set for $R$ to produce a locally closed embedding in $\PP^{19}_k$ that is a counterexample to Bertini's Theorem for $F$-rationality. Label the generators in \eqref{eq:A-algebra-gens} for $R$ as an $A$-algebra as $g_1,\ldots, g_{13}$, distinguishing $g_1=x_1x_2x_3$ and $g_{11}=yx_1$, $g_{12}=yx_2$, $g_{13}=yx_3$. Considering $R$ as a $k$-algebra, we then replace $t_6$ with the generator
\begin{equation}
    \theta:=t_6+t_1^2x_1x_2x_3+t_2^2 yx_1+t_3^2 yx_2+t_4^2 yx_3.
\end{equation}
Take homogeneous coordinates $[Z_0:\cdots :Z_{19}]$ for $\PP^{19}_k$ and consider the embedding
\begin{align*}
    \iota_\theta:&X\hookrightarrow \PP^{19}_k\\
    &x\mapsto [1:t_1(x):\cdots:t_5(x):\theta(x):g_1(x):\cdots:g_{13}(x)].
\end{align*}
Let $[a_0:\cdots:a_{19}]$ be the corresponding homogeneous coordinates on the dual space $(\PP^{19}_k)^\vee$ so that $[a]$ parametrizes the hyperplane $H_a=V(a_0Z_0+\cdots+a_{19}Z_{19})\subseteq \PP^{19}_k$. Now define the open set
\begin{align*}
    U:=\{[a_0:\cdots:a_{19}]\in (\PP^{19}_k)^\vee \mid a_6\neq 0\}.
\end{align*}
We will show that $\iota_\theta^{-1}(H_a)$ is not $F$-injective for every $[a]\in U$. 
\begin{theorem}
    For every hyperplane $H\in U(k)$, the hyperplane section $\iota_\theta^{-1}(H)=X\cap H$ is not $F$-injective.
\end{theorem}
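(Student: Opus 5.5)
The plan is to describe $X\cap H$ explicitly as a Veronese-type hypersurface ring and then exhibit a nonzero socle element of a parameter ideal that lies in its Frobenius closure. That is, $\fq'\neq(\fq')^F$ for a parameter ideal $\fq'$ in some local ring of $X\cap H$.

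\textbf{Step 1: eliminate $t_6$.} Fix $[a]\in U(k)$. Since $a_6\neq 0$, we may rescale so that $a_6=1$. Then $\iota_\theta^{-1}(H_a)=\Spec R/(h)$ with
\[
h=\theta+a_0+\sum_{i=1}^5 a_it_i+\sum_{j=1}^{13}a_{6+j}g_j .
\]
Since $\theta=t_6+(\text{terms not involving }t_6)$, the relation $h=0$ solves for $t_6$ (characteristic $2$, so signs are irrelevant):
\[
t_6=\varphi:=a_0+\sum_{i\le 5}a_it_i+t_1^2g_1+t_2^2yx_1+t_3^2yx_2+t_4^2yx_3+\sum_j a_{6+j}g_j .
\]
Hence $R/(h)\cong (S')^{(3)}$, where $S'=k[t_1,\dots,t_5,x_1,x_2,x_3,y]/(f')$ and $f'=y^2+\sum_{i\le5}t_im_i^2+\varphi\, m_6^2$.

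Two remarks on $f'$:
\begin{itemize}
\item It is not homogeneous, because $\varphi$ contains degree-three terms. Consequently $R/(h)$ is not literally a graded Veronese, and I will work directly in $R/(h)$, viewed as the quotient of the free module $B\oplus yB$ by $h$.
\item The point $\tau\in Z$ with $t_1=\dots=t_5=0$, $x=y=0$ and $t_6=a_0$ lies on $X\cap H$. I localize there, and, as in \cref{lem: closed-point-isomorphism}, absorb the constant $a_0$ by replacing $y$ with $y+\lambda m_6$, where $\lambda^2=a_0$.
\end{itemize}
The parameter ideal I intend to use is $\fq'=(t_1,\dots,t_5,x_1^3,x_2^3,x_3^3)$ in this local ring.

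\textbf{Step 2: compute a Frobenius power.} The candidate element is $\eta=yw_6$ (possibly corrected by lower-order terms). The computation of \cref{lem:congruence} gives
\[
(yw_6)^2\equiv \varphi\, (m_6w_6)^2=\varphi\, w^2 \pmod{\fb_2',f'},
\]
modulo the analogous ideal $\fb_2'$. Now check the pieces of $\varphi$ one at a time:
\begin{itemize}
\item the linear terms $a_it_i$ die modulo $\fq'$-type ideals only to first order, so they need to be handled after multiplying;
\item the terms $t_1^2g_1w^2$ and $t_2^2yx_1w^2,\dots,t_4^2yx_3w^2$ lie in $(t_1^2,\dots,t_4^2)\subseteq\fq'^{[2]}$;
\item each cubic monomial $g_j\neq g_1$ multiplied by $w^2$ is divisible by some $x_r^6$, so it too lies in $\fq'^{[2]}$.
\end{itemize}
The essential mechanism is that the $t_i^2$-coefficients in $\theta$ turn the troublesome part of $t_6$ into something that lies in the Frobenius power of the ideal of the remaining parameters. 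In other words, the generic hyperplane section realizes an inseparable specialization of $t_6$. This is exactly the mechanism of \cite{QGSS24}.

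\textbf{Step 3 (the main obstacle).} Two kinds of leftover terms remain:
\begin{itemize}
\item the contributions $a_{17}g_{11},a_{18}g_{12},a_{19}g_{13}$ (the $yx_i$ generators) and $a_{7}g_1$, for which $g\,w^2$ is \emph{not} in $\fq'^{[2]}$;
\item the linear terms $a_it_iw^2$.
\end{itemize}
To deal with them, I would first try a coordinate change $y\mapsto y+\ell$, where $\ell$ is a suitable polynomial in the $x_i$ with coefficients $\sqrt{a_j}$, chosen to absorb these terms into $y^2$. Alternatively, I would replace $\eta$ by $\eta+\sum_i\beta_i yw_i+\beta_0w$ with $\beta$'s chosen so that the cross terms cancel via \cref{lem:congruence}. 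The key fact making this possible is that $k$ is perfect, so square roots of the coefficients exist.

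After this, one verifies $\eta^2\in\fq'^{[2]}$. By iterating, or because Frobenius closure only needs one $q$, this gives $\eta\in(\fq')^F$. Finally, $\eta\notin\fq'$, because the image of $yw_6$ in the monomial basis of $R/(h)/\fq'$ is a nonzero socle element, by the same monomial-basis argument as for $\Soc(R/\fq)$. Hence $X\cap H$ is not $F$-injective. Making this bookkeeping of all nineteen coefficients uniform in $[a]\in U$ is where I expect the real work to lie.
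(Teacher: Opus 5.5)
\textbf{The gap: your choice of base point.} The problem is not bookkeeping. You normalize $a_6=1$ and localize at $\tau_0\in Z$ with $t_1=\dots=t_5=0$ and $t_6=a_0$. At that point the coefficient $t_1^2+a_7$ of $g_1=x_1x_2x_3$ and the coefficients $t_{i+1}^2+a_{16+i}$ of $yx_i$ are units.

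For generic $H\in U$, the local ring of $X\cap H$ at $\tau_0$ is actually $F$-injective. So every parameter ideal there is Frobenius closed, and no corrected $\eta$ can exist. One way to see this:
\begin{enumerate}
\item Take local coordinates $T_i=t_i$ for $i\le 5$, $T_6=t_6+a_0$, and $Y=y+\sqrt{a_0}\,m_6$, so that $f=Y^2+\sum_iT_im_i^2$.
\item Run the Frobenius-dual computation from the proof of \cref{lemma: Not F-injective lemma}.
\item The element $x_1x_2x_3YT_1\cdots T_5\in\omega_R$ maps to $\overline{Y}$ in $\omega_R/\fm\omega_R$.
\item The element $Y\prod_{j\neq i}T_j$ maps to $\sqrt{a_7+a_{17}\sqrt{a_0}}\;\overline{m_i}$. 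Here $a_7+a_{17}\sqrt{a_0}$ is the coefficient of $x_1x_2x_3$ in the $T$-constant part of the equation of $H$, since $a_{17}yx_1=a_{17}(Y+\sqrt{a_0}\,x_2x_3)x_1$.
\end{enumerate}
Hence the Frobenius dual of $X\cap H$ is surjective at $\tau_0$ whenever $a_7+a_{17}\sqrt{a_0}\neq0$, which is a dense open condition on $U$. In particular, the substitution $y\mapsto y+\ell$ cannot help: $(y+\ell)^2=y^2+\ell^2$ only adds squares, and $a_7x_1x_2x_3m_6^2$ is not a square.

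\textbf{The missing idea.} The $t_i^2$-coefficients in $\theta$ are there to absorb those constants by moving the base point; this is where perfectness of $k$ really enters.
\begin{enumerate}
\item Choose $\tau\in Z$ with $\tau_1=\sqrt{a_7}$ and $\tau_{i+1}=\sqrt{a_{16+i}}$ for $i=1,2,3$.
\item Choose $(\tau_5,\tau_6)$ on the line $h_1=0$, where $h_1=a_0+\sum_{j=1}^6a_jt_j$. These points form the line $\Gamma_H$.
\item Set $T_j=t_j+\tau_j$ and $Y=y+\sum_j\sqrt{\tau_j}\,m_j$. Then $t_1^2+a_7=T_1^2$ and $t_{i+1}^2+a_{16+i}=T_{i+1}^2$.
\item The restricted equation becomes $L=\widetilde h_1+h_2+T_1^2g_1+\sum_iT_{i+1}^2yx_i$, with $\widetilde h_1=\sum_ja_jT_j$ and $h_2=\sum_{j=2}^{10}a_{6+j}g_j$.
\end{enumerate}

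Your second repair then works, in the form $\xi=Y\sum_j\sqrt{a_j}\,w_j$ with no $w$-term:
\begin{enumerate}
\item $\xi^2\equiv\widetilde h_1w^2$ modulo $(x_1^6,x_2^6,x_3^6)R$, because the cross terms $(m_iw_j)^2$ with $i\neq j$ are divisible by some $x_r^6$.
\item Using $L=0$, this becomes $(h_2+T_1^2g_1+\sum_iT_{i+1}^2yx_i)w^2$, which lies in $J^{[2]}$ for $J=(T_1,\dots,T_5,x_1^3,x_2^3,x_3^3)$.
\item Finally $\xi\notin J$, since $Yw_6$ occurs with coefficient $1$.
\end{enumerate}
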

\begin{proof}
    We show in fact that for every $H\in U(k)$, there exists an affine line $\Gamma_H\subseteq Z\cap (X\cap H)$ such that the local ring $\cO_{X\cap H,x}$ is not $F$-injective for every closed point $x\in \Gamma_H$. Let $[a]\in U$ and write $H_a=V\left(\sum\limits_{j=0}^{19} a_j Z_j\right)$. Let $b_i=\frac{a_i}{a_6}$ for $0\leq i\leq 6$. One checks that restricting $H_a$ to $X$ under $\iota_\theta$ gives the element
    \begin{align}
        L=h_1+h_2+(t_1^2+\alpha_0)x_1x_2x_3+\sum\limits_{i=1}^3 (t_{i+1}^2+\alpha_i)yx_i\label{eq:Line-not-F-injective}
    \end{align}
    where
    \begin{align*}
        h_1:=b_0+\sum\limits_{j=1}^6 b_j t_j,\quad h_2:=\sum\limits_{j=2}^{10}d_j g_j
    \end{align*}
    for some $d_j\in k$. Let $r_{i+1}=\sqrt{\alpha_i}$ for $0\leq i\leq 3$ and define the affine line
    \begin{align*}
        \Gamma_{H_a}:=V_Z(t_1-r_1,t_2-r_2,t_3-r_3,t_4-r_4,h_1).
    \end{align*}
Fix a closed point $\tau\in \Gamma_{H_a}(k)$. Let $\lambda_j=\sqrt{\tau_j}$ and consider the change of coordinates
\begin{align*}
    T_j=t_j-\tau_j,\quad Y=y+\sum\limits_{j=1}^6\lambda_j m_j.
\end{align*}
Note that $Y^2=\sum\limits_{j=1}^6 T_j m_j^2$ and \eqref{eq:Line-not-F-injective} may be rewritten as
\begin{align}
L=\widetilde{h}_1+h_2+T_1^2x_1x_2x_3+\sum\limits_{i=1}^3 T_{i+1}^2\left(Y+\sum\limits_{j=1}^6 \lambda_j m_j\right)x_i\label{eq:translated-L}
\end{align}
where $\widetilde{h}_1=\sum\limits_{j=1}b_j T_j$. Now let $\fn=(T_1,\ldots, T_6)R+R_+\subseteq R$ and denote $D:=R_\fn/LR_\fn$. Note that the degree zero term $\widetilde{h}_1$ of $L$ is nonzero, so $D$ is an eight-dimensional Cohen--Macaulay local ring by \cref{lem:SFR-away-from-Z}\ref{lem:SFR-away-from-Z-CM}. Consider the ideals
\begin{align*}
    J&:=(T_1,\dots, T_5,x_1^3,x_2^3,x_3^3)D\\
    I&:=(T_1,\dots, T_5,x_1^3,x_2^3,x_3^3,L)R
\end{align*}
and note that $R/I$ is supported at $\fn/I$ so that $D/J\cong (R/I)_{\fn/I}$. We aim to show that $J$ is a parameter ideal of $D$. By pigeonhole we have $((x_1,x_2,x_3)B)^7\subseteq (x_1^3,x_2^3,x_3^3)B$, so in particular $h_2 m_6^2\in (x_1^3,x_2^3,x_3^3)B$. It follows that
\begin{align*}
    S/IS &\cong \frac{k[T_6,x_1,x_2,x_3,Y]}{(T_6+h_2,x_1^3,x_2^3,x_3^3,Y^2+T_6m_6^2)}\\
    &\cong \frac{k[x_1,x_2,x_3,Y]}{(x_1^3,x_2^3,x_3^3,Y^2+h_2 m_6^2)}\\
    &\cong \frac{k[x_1,x_2,x_3,Y]}{(x_1^3,x_2^3,x_3^3,Y^2)}=:E.
\end{align*}
Since the projection $S\to R$ is $R$-linear and restricts to the identity on $R$, we have $IS\cap R=I$ and an induced injection $R/I\hookrightarrow E$ where $E$ is a finite dimensional $k$-vector space. It follows that $R/I$ and its localization $D/J$ are both artinian so that $J$ is a parameter ideal of $D$.

Let $\beta_j=\sqrt{b_j}$ and denote $\xi=Y\sum\limits_{j}^6\beta_j w_j\in D$. Note that each $Y w_j$ has degree six, so $\xi\in R$. We claim that $\xi\in J^F\smallsetminus J$. To see that $\xi\not\in J$, simply note that the term $Yx_1^2x_2x_3$ appears with nonzero coefficient. We next show that $\xi^2\in J^{[2]}$. In the hypersurface $S$, we have the identity
\begin{align}
    \xi^2&=Y^2\sum\limits_{j=1}^6 b_j w_j^2\nonumber\\
    &=\sum\limits_{i=1}^6\sum\limits_{j=1}^6 T_i b_j (m_i w_j)^2\nonumber\\
    &=\widetilde{h}_1w^2+\sum\limits_{i\neq j}^6 T_i b_j (m_i w_j)^2.\label{eq:xi-h1}
\end{align}
For $i\neq j$ write $m_i=x_1^{a_1}x_2^{a_2}x_3^{a_3}$ and $m_j=x_1^{e_1}x_2^{e_2}x_3^{e_3}$ where $a_1+a_2+a_3=e_1+e_2+e_3=2$. Choose an index $r$ for which $a_r>e_r$ and write in $R$
\begin{align}
    m_iw_j&=\prod\limits_{\ell=1}^3 x_\ell^{2+a_\ell-e_\ell}\nonumber\\
    &=x_r^3\frac{m_iw_j}{x_r^3}.\label{eq:monomial-factor-in-R}
\end{align}
Note that the expression in \eqref{eq:monomial-factor-in-R} is valid in $R$ because the second factor is cubic in the $x_i$. It follows that
\begin{equation}
    \sum\limits_{i\neq j}^6 T_i b_j (m_i w_j)^2\in (x_1^6,x_2^6,x_3^6)R.\label{eq:i-not-j}
\end{equation}
Using \eqref{eq:translated-L} we also observe (in the ring $D$) the identity 
\begin{align*}
    \widetilde{h}_1w^2=h_2w^2+T_1^2 g_1 w^2+\sum\limits_{i=1}^3 T_{i+1}^2\left(Y+\sum\limits_{j=1}^6 \lambda_j m_j\right)x_i w^2.
\end{align*}
One verifies at once that $h_2 w^2\in (x_1^6,x_2^6,x_3^6)R$, so that by \eqref{eq:xi-h1} and \eqref{eq:i-not-j} we have
\begin{align*}
    \xi^2\equiv T_1^2 g_1w^2+\sum\limits_{i=1}^3 T_{i+1}^2\left(Y+\sum\limits_{j=1}^6 \lambda_j m_j\right)x_i w^2 \bmod (x_1^6,x_2^6,x_3^6)D.
\end{align*}
It follows that
\begin{align*}
    \xi^2\in (T_1^2,T_2^2,T_3^2,T_4^2,x_1^6,x_2^6,x_3^6)D\subseteq J^{[2]}
\end{align*}
as claimed, hence $D$ is not $F$-injective by \cite[Corollary 3.9]{QS17}.
\end{proof}

\section{The Projective Example}\label{sec: projective}

\subsection{\texorpdfstring{$F$}{F}-rational charts}\label{section affine charts}

To simplify the indexing in what follows, consider the seven-element set $$\Lambda:=\{c=(c_1,c_2,c_3,d)\in\Z_{\geq 0}^4\mid c_1+c_2+c_3+2d=2\}.$$ Let $k=\overline{\F_2}$ and for each $a\in \Lambda$ consider the polynomial rings $A_a:=k[t_b\mid b\in \Lambda\setminus\{a\}]$ and $B_a:=A_a[x_1,x_2,x_3,y]$. For each $a=(a_1,a_2,a_3,b)\in \Lambda$ let 
\[
m_a = x_1^{a_1}x_2^{a_2}x_3^{a_3}y^b,\quad f_a = m_a^2 + \sum_{b\in\Lambda\setminus\{a\}}t_bm_b^2,\quad \mbox{and } C_a = B_a/(f_a).
\]
We assign the variables $x_1,x_2,x_3$ degree $1$, $y$ degree $2$, and for each $a\in\Lambda$, $t_a$ the degree $0$. For each $n\in\NN$ let $\mathcal{E_n} = \{x_1^{\alpha_1}x_2^{\alpha_2}x_3^{\alpha_3}y^{\beta}\mid \alpha_1+\alpha_2+\alpha_3+2\beta = n\}$, the collection of degree $n$ monomials in the variables $x_1,x_2,x_3,y$. The set $\Lambda$ labels the exponent vectors of the monomials in $\mathcal{E}_2$ and
\[
\{m_b\mid b\in\Lambda\} = \{y,x_1^2,x_1x_2,x_1x_3,x_2^2,x_2x_3,x_3^2\} = \mathcal{E}_2
\]
consists of seven quadratic monomials in $B_a$ for every $a\in\Lambda$. Moreover, $f_a$ is homogeneous of degree $4$ and for each $a\in\Lambda$ we let $R_a = (C_a)^{(3)}$, the third Veronese of $C_a$. If $a = (0,0,0,1)$, then $X=\Spec(R_{(0,0,0,1)})$ is the affine variety of \cref{section quasi-projective example} that admits a locally closed embedding into $\PP^{19}_k$ so that if $H$ is a general hyperplane of $\PP^{19}_k$, then $X\cap H$ is not $F$-injective.

\begin{lemma}
    \label{lem: F rationality of Veronese general}
    For each $a\in \Lambda$, the ring $R_a$ enjoys the following properties:
    \begin{enumerate}[label=(\alph*)]
        \item\label{lem: F rationality of Veronese general group action} Let $\zeta\in k$ be a primitive third root of unity and $\mu_3=\langle\zeta\rangle = \{ 1,\zeta,\zeta^2\}$. Then $R_a$ is the invariant ring of a $\mu_3$-action on $C_a$ with the properties $\zeta\times x_i\mapsto \zeta x_i$, $\zeta\times y\mapsto \zeta^2y$, and $\zeta\times t_b = t_b$ for all $b\in\Lambda\setminus\{a\}$.
        \item\label{lem: F rationality of Veronese general CM and normal} $R_a$ is a nine-dimensional Cohen--Macaulay normal domain.
        \item\label{lem: F rationality of Veronese general SFR locus} For every $\fp\in \Spec(R_a)\smallsetminus V((R_a)_+)$, the scheme $\Spec(R_a)$ is smooth over $k$ at $\fp$.
        \item\label{lem: F rationality of Veronese general canonical module} $\omega_{R_a} \cong \oplus_{n\in\NN} (C_a)_{3n-1}$ is a canonical module of $R_a$. 
        \item\label{lem: F rationality of Veronese general restriction of Frobenius dual} Let $\Phi_{B_a}:F_*B_a\to B_a$ denote the Frobenius dual of $B_a$ with respect to the $p$-basis 
        \[
        \{t_b\mid b\in\Lambda\setminus\{a\}\}\bigcup\{x_1,x_2,x_3,y\}
        \]
        Let $\Phi_{C_a}$ be the Frobenius dual of $C_a$ obtained as the restriction of $\Phi_{B_a}(F_*f_a-)$ to the quotient $C_a$. Identify $\oplus_{n\in\NN} (C_a)_{3n-1}$ as the canonical module $\omega_{R_a}$ of $R_a$.
        \begin{itemize}
            \item For every $n\in\NN$, 
                \[
                \Phi_{C_a}((F_*\omega_{R_a})_n)\subseteq (\omega_{R_a})_n.
                \]
            \item The restriction of the domain of the Frobenius dual map $\Phi_{C_a}:F_*C_a\to C_a$ to $\Phi_{C_a}:F_*\omega_{R_a}\to \omega_{R_a}$ is the Frobenius dual of $R_a$.
        \end{itemize}
        \item\label{lem: F rationality of Veronese general F-rational} $R_a$ is $F$-rational.
    \end{enumerate}
\end{lemma}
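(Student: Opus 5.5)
The plan is to treat parts \ref{lem: F rationality of Veronese general group action}--\ref{lem: F rationality of Veronese general canonical module} as formal consequences of the grading and of the shape of $f_a$. Part \ref{lem: F rationality of Veronese general restriction of Frobenius dual} is then a degree bookkeeping exercise. Essentially all the work is in \ref{lem: F rationality of Veronese general F-rational}.

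\emph{Parts \ref{lem: F rationality of Veronese general group action} and \ref{lem: F rationality of Veronese general CM and normal}.} For \ref{lem: F rationality of Veronese general group action}, the $\mu_3$-action is the one induced by the grading, $\zeta$ acting on $C_a$ in degree $n$ by $\zeta^n$. This uses $\zeta^2 y$ because $\deg y=2$, and the action is well defined because $f_a$ is homogeneous. Its invariants are exactly $\bigoplus_n (C_a)_{3n}=R_a$.

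For \ref{lem: F rationality of Veronese general CM and normal}:
\begin{itemize}
\item \emph{Domain.} Argue as in \cref{lem:SFR-away-from-Z}. Since $\partial f_a/\partial t_b=m_b^2\neq 0$, the polynomial $f_a$ is not a square, so it is irreducible and $C_a$ is a domain.
\item \emph{Normality.} The Jacobian ideal contains all $m_b^2$, $b\in\Lambda\smallsetminus\{a\}$. Together with $f_a$, which supplies $m_a^2$ modulo the rest, these cut out $V(x_1,x_2,x_3,y)$, which has codimension $3$ in $C_a$. Hence $C_a$ is normal, being $R_1$ and a hypersurface.
\item \emph{Cohen--Macaulay.} Since $|\mu_3|$ is invertible in $k$, $R_a$ is a direct summand of the Cohen--Macaulay ring $C_a$, via the Reynolds operator. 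It is Cohen--Macaulay, for instance because $C_a$ is a finite $R_a$-module and local cohomology of $R_a$ is a summand of that of $C_a$. Normality and dimension $9$ pass to the invariant ring.
\end{itemize}

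\emph{Parts \ref{lem: F rationality of Veronese general SFR locus} and \ref{lem: F rationality of Veronese general canonical module}.} For \ref{lem: F rationality of Veronese general SFR locus}, note that $\mu_3$ acts freely on $\Spec C_a\smallsetminus V(x_1,x_2,x_3,y)$, which is exactly the preimage of $\Spec R_a\smallsetminus V((R_a)_+)$. Moreover $C_a$ is smooth there by the Jacobian computation. So the quotient map is étale there and $\Spec R_a$ is smooth.

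For \ref{lem: F rationality of Veronese general canonical module}, $C_a$ is a hypersurface of degree $4$ in a polynomial ring whose variables have weights $0$ (the $t_b$), $1,1,1,2$. Its $a$-invariant is $4-5=-1$, so $\omega_{C_a}\cong C_a(-1)$. Since the quotient map is étale in codimension one (the fixed locus has codimension $3$), $\omega_{R_a}$ is the invariant part of $\omega_{C_a}$ for the action twisted so that the generator is invariant. Concretely this is $\bigoplus_n (C_a)_{3n-1}$; alternatively one checks it is reflexive of rank one and agrees with $\omega$ on the smooth locus.

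\emph{Part \ref{lem: F rationality of Veronese general restriction of Frobenius dual}.} With $p=2$, $\Phi_{B_a}$ sends $F_*(\text{monomial of degree }D)$ either to $0$ or to a monomial of degree $(D-5)/2$, since the weights sum to $5$. Hence $\Phi_{B_a}(F_*f_a\,\cdot)$ sends degree $d$ to degree $(d-1)/2$. An input of degree $3m-1$ has nonzero image only if $m$ is even, $m=2n$, and then lands in degree $3n-1$. Reading $(F_*\omega_{R_a})_n$ as the summand that can map to degree $3n-1$ gives the graded containment. That this restriction is the Frobenius dual of $R_a$ follows by dualizing the split inclusion $R_a\to C_a$. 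The map is compatible with Frobenius and its $\mu_3$-isotypic decomposition, and the $\omega$-dual of $F$ on the invariant summand is the restriction of the $\omega$-dual on $C_a$. Alternatively, check agreement on the smooth locus and use reflexivity.

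\emph{Part \ref{lem: F rationality of Veronese general F-rational}, the main obstacle.} Away from $V((R_a)_+)\cong\A^6$ the ring is smooth by \ref{lem: F rationality of Veronese general SFR locus}. Along $V((R_a)_+)$ I would first reduce to the origin by the translation involution of \cref{lem: closed-point-isomorphism}. For a point $\tau$, substitute $t_b\mapsto t_b+\tau_b$ and replace $m_a$ by $m_a+\sum\lambda_b m_b$ where $\lambda_b^2=\tau_b$. When $m_a$ is a single variable, such as $y$, this is an automorphism. When $m_a$ is an $x$-monomial, I would instead choose a linear change in the relevant quadratic monomials and check that it preserves $f_a$. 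If that fails, I would use that the set of points of $V((R_a)_+)$ where $R_a$ is $F$-rational is open and $\mathbb{G}_m$-stable under the torus scaling, and argue that the non-$F$-rational locus is closed and conic, so it contains the origin if nonempty.

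At the origin I would use the Frobenius-dual criterion of \cite[Proposition~2.3]{DSPS25}. Take a parameter test element $c$ coming from \ref{lem: F rationality of Veronese general SFR locus}, say a power of $x_1^3$, and show that $\Phi^e_{R_a}(F^e_*c\,\cdot)$ surjects onto the lowest-degree generators $(C_a)_2$ of $\omega_{R_a}$. Using \ref{lem: F rationality of Veronese general restriction of Frobenius dual} and \cref{proposition: Fedder for Frobenius dual}, this map is $\Phi^e_{B_a}(F^e_*c f_a^{q-1}\,\cdot)$. Expanding $f_a^{q-1}$ produces terms $\prod t_b^{i_b}m_b^{2i_b}$. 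Exactly as in \cref{lem:congruence,lem:tight-closure}, I would pick a multi-index that makes the $t$-exponents $\equiv q-1$ and leaves a monomial in $x,y$ hitting each $m_b\in\mathcal{E}_2$ after applying $\Phi^e$, possibly after multiplying by $t_b$. By the permutation symmetry of $x_1,x_2,x_3$ it suffices to treat $a=y$, $a=x_1^2$ and $a=x_1x_2$.

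I expect the hardest step to be this explicit surjectivity computation for the cases $a\neq(0,0,0,1)$, together with making the translation to the origin rigorous there. For $a=(0,0,0,1)$ it is already done by the tight-closure argument of Section~\ref{section quasi-projective example}.
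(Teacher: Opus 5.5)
Parts (a)--(e) follow the paper's proof in outline: the grading-induced $\mu_3$-action, the Jacobian criterion plus Serre, the invariant ring as a direct summand, an étale quotient off $V(x_1,x_2,x_3,y)$, $a$-invariant $-1$, and a degree count for $\Phi$. There are two small slips.
\begin{itemize}
\item ``$f_a$ is not a square, hence irreducible'' is not a valid inference. Use instead that $f_a$ is linear in $t_b$ with coefficient $m_b^2$, which is coprime to the remaining terms.
\item In (d), the correct equivariant structure on $\omega_{C_a}=C_a(-1)$ gives the generator weight $\zeta$, i.e.\ degree $1$, not trivial weight. Trivial weight would return $R_a$ itself. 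Your final answer $\bigoplus_n(C_a)_{3n-1}$ is nevertheless right.
\end{itemize}

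The genuine gap is in (f). The decisive computation is only announced (``I would pick a multi-index\ldots''), and the scaffolding around it does not hold up. Your reduction to the origin fails for $a=x_1x_2$ and its permutations. The translation trick needs $m_a$ to be a variable. The torus fallback also fails: a diagonal $\mathbb{G}_m$-action with weights $e_1,e_2,e_3,e_y$ on $x_1,x_2,x_3,y$ that preserves $f_a$ up to scalar forces $t_{x_1^2}$ and $t_{x_2^2}$ to have weights $2(e_2-e_1)$ and $2(e_1-e_2)$. These are never both positive. In fact $t_{x_1^2}t_{x_2^2}$ is invariant under the whole diagonal torus, so points with $\tau_{x_1^2}\tau_{x_2^2}\neq0$ never have the origin in their orbit closure. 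Separately, working with $c$ a power of $x_1^3$ and $q\gg0$ forces you to control all cross terms of $f_a^{q-1}$, which is much harder than necessary.

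The missing idea is that surjectivity can be checked globally at $e=1$, so no reduction to the origin is needed. Take $c=x_1x_2x_3\in(R_a)_1$. Let $m=x_1x_2x_3\,y\prod_{b\neq a}t_b$ be the product of all ten variables of $B_a$. Then $\Phi_{B_a}(F_*m)=1$, and $\Phi_{B_a}$ kills every reduced monomial properly dividing $m$. Expanding $f_a$ gives
\[
\Phi_{B_a}(F_*f_am)=m_a,\qquad \Phi_{B_a}\bigl(F_*f_a\tfrac{m}{t_b}\bigr)=m_b\quad(b\neq a).
\]
Both $m$ and $m/t_b$ lie in $c\,(C_a)_2\subseteq c\,\omega_{R_a}$. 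The seven $m_b$ generate $\omega_{R_a}$ over $R_a$, and the image of $\Phi_{R_a}(F_*c\,\cdot)$ is an $R_a$-submodule. Hence this map is surjective onto all of $\omega_{R_a}$, not merely near one point.

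The identity
\[
\Phi^{e+1}_{R_a}\bigl(F^{e+1}_*c^{2^{e+1}-1}\,\cdot\bigr)=\Phi_{R_a}\bigl(F_*c\,\Phi^{e}_{R_a}(F^{e}_*c^{2^{e}-1}\,\cdot)\bigr)
\]
propagates surjectivity to every $e$. It therefore gives surjectivity of $\Phi^e_{R_a}(F^e_*c^n\,\cdot)$ once $2^e-1\geq n$, where $c^n$ is a parameter test element; such an $n$ exists because $(R_a)_c$ is regular by (c). Then \cite[Proposition~2.3]{DSPS25} gives $F$-rationality of $R_a$ everywhere at once, uniformly in $a$. Note that any computation exhibiting exact preimages of the $m_b$ would already be global, so your reduction step was superfluous even on its own terms.
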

\begin{proof}
    Consider the $\mu_3$-action on $B_a$ defined by $\zeta\times x_i\mapsto \zeta x_i$, $\zeta\times y\mapsto \zeta^2y$, and $\zeta\times t_b = t_b$ for all $b\in\Lambda\setminus\{a\}$. Then $\zeta\times f_a = \zeta^4f_a$ and there is an induced action on $C_a = B_a/(f_a)$ as claimed. The action of $\zeta$ on $(C_a)_d$ is multiplication by $\zeta^d$. Therefore $R_a$ is the invariant ring of the $\mu_3$-group action on $C_a$. The hypersurface $C_a$ is nine-dimensional and Cohen-Macaulay. The map $R_a\to C_a$ is finite, therefore $R_a$ is nine-dimensional. By \cite[Proposition~13]{HE71}, $R_a$ is Cohen-Macaulay. 
    
    To show $R_a$ is normal it suffices to show $C_a$ is normal since invariant rings of a finite group action on a normal domain are normal. For each $b\in\Lambda\setminus\{a\}$, 
    \[
    \frac{\partial f_a}{\partial t_b} = m_b^2
    \]
    and $B_a/((f_a)+ (m_b^2\mid b\in\Lambda\setminus\{a\}))$ is six-dimensional. The height three ideal $\left(m_b^2\mid b\in\Lambda\setminus\{a\}\right)C_a$ defines the singular locus of $\Spec(C_a)$. Therefore $C_a$ is a normal domain by Serre's criterion for normality.

    We next verify if $\fp\in \Spec(R_a)\setminus V((R_a)_+)$ then $\Spec(R_a)$ is smooth at $\fp$ over $k$. Equivalently, $(R_a)_{\fp}$ is a regular local ring as $k$ is perfect. For such a prime $\fp$, either $x_i^3\not\in \fp$ for some $1\leq i\leq 3$ or $y^3\not\in \fp$. If $1\leq i\leq 3$, $c\in C_a$ is homogeneous, then there exists $0\leq r<3$ so that $c = (cx_i^{-r})x_i^r$ and $cx_i^{-r}$ has degree divisible by three. Therefore $C_a[x_i^{-1}]\cong R_a[x_i^{-3}][U]/(U^3-x_i^{-3})$. Similarly, for homogeneous $c\in C_a$ there exists $0\leq r<3$ so that $c = (cy^{-r})y^r$, $cy^{-r}\in R_a[y^{-3}]$, and $C_a[y^{-1}]\cong R_a[y^{-3}][U]/(U^3-y^{-3})$. Therefore the finite maps $R_a[x_i^{-3}]\subseteq C_a[x_i^{-1}]$ and $R_a[y^{-3}]\subseteq C_a[y^{-1}]$ are faithfully flat and \'{e}tale. The algebras $C_a[x_i^{-1}]$ and $C_a[y^{-1}]$ are regular by the Jacobian criterion, hence $R_{\fp}$ is regular for every $\fp\in \Spec(R_a)\setminus V((R_a)_+)$.

    As an $A_a$-algebra, $B_a$ is generated by the four positive degree elements $x_1,x_2,x_3,y$ of degrees $1,1,1,2$ respectively. Therefore $\omega_{B_a} = B_a(-(1+1+1+2))=B_a(-5)$, see \cite[\S 3.6]{BrunsHerzog}. The element $f_a\in B_a$ is homogeneous of degree $4$, hence $\omega_{C_a} = C_a(4-5) = C_a(-1)$. By \cite[Corollary~3.1.3]{GW78}, $\omega_{R_a} \cong (\omega_{C_a})^{(3)}$. In particular, $(\omega_{R_a})_n = C_a(-1)_{3n} = (C_a)_{3n-1}$.

    Consider the natural $\Z \frac{1}2$-grading of $F_*\omega_{R_a}$ given by $(F_*\omega_{R_a})_{\frac{n}{2}} = F_*((\omega_{R_a})_n) = F_*((C_a)_{3n-1})$. After lifting $\Phi_{C_a}$ to $\Phi_{B_a}(F_*f_a-)$ in $\Hom_{B_a}(F_*B_a,B_a)$ by degree considerations, 
    \[
    \Phi_{C_a}\bigl(F_*((C_a)_{3n-1})\bigr)
    \subseteq
    \begin{cases}
        (C_a)_{\frac{3n}{2}-1}, & \text{if $n$ is even},\\
        0, & \text{if $n$ is odd}.
    \end{cases}
    \]
    Hence there is a well-defined restriction of the Frobenius dual map $\Phi_{C_a}:F_*C_a\to C_a$ to a map $\Phi_{R_a}:=\Phi_{C_a}:F_*\omega_{R_a}\to\omega_{R_a}$. To identify this restricted map with the canonical Frobenius dual on
    $R_a$, let $\mathfrak m\in\operatorname{MaxSpec}(A_a)$ and put $\mathfrak n_C=\mathfrak m C_a+(C_a)_+$ and $
    \mathfrak n_R=\mathfrak m R_a+(R_a)_+.$ The Veronese decomposition and the \v{C}ech complexes give a natural
    isomorphism
    \[
    H^9_{\mathfrak n_R}(R_a)
    \cong
    \left(H^9_{\mathfrak n_C}(C_a)\right)^{(3)}.
    \]
    This isomorphism commutes with Frobenius. Indeed, Frobenius sends
    relative degree $3d$ to relative degree $6d$. By local duality, the
    Matlis duals of these modules are the completions of the localized canonical modules
    $\omega_{R_a}$ and $\omega_{C_a}$, respectively, and the dual of
    Frobenius is the canonical Frobenius dual. Since $\Phi_{C_a}$ is the
    canonical Frobenius dual on $C_a$, its restriction to
    $F_*\omega_{R_a}$ is the canonical Frobenius dual on $R_a$. Under the compatible canonical-module identifications, functorial local duality identifies the completed localization of the restricted map with the completed canonical Frobenius dual. By faithful flatness of completion, the two maps agree after localization at $\mathfrak n_R$. Since $\omega_{R_a}$ is torsion-free over the domain $R_a$, the map
    \[
    \omega_{R_a}\longrightarrow(\omega_{R_a})_{\mathfrak n_R}
    \]
    is injective. Therefore the two globally defined maps $F_*\omega_{R_a}\to\omega_{R_a}$ agree.

    Let $m = x_1x_2x_3y\prod_{b\in\Lambda\setminus\{a\}}t_b\in B_a$. We note that $\Phi_{B_a}(F_*m) = 1$ and if $m'$ is a reduced monomial in $B_a$ properly dividing the monomial $m$, then $\Phi_{B_a}(F_*m')=0$. If $b\in \Lambda\setminus\{a\}$, then
    \[
    f_a\frac{m}{t_b} = m_a^2\frac{m}{t_b} + m_b^2m+ \sum_{c\in\Lambda\setminus\{a,b\}}t_cm_c^2\frac{m}{t_b}
    \]
    and
    \[
    \Phi_{B_a}\left(F_*f_a\frac{m}{t_b}\right) = m_b.
    \]
    Similarly,
    \[
    f_am = m_a^2m + \sum_{b\in\Lambda\setminus\{a\}} t_bm_b^2m
    \]
    and
    \[
    \Phi_{B_a}\left(F_*f_am\right)=m_a.
    \]

    The seven elements $\{m_b\mid b\in \Lambda\}$ generate $\omega_{R_a} = \oplus_{n\in\NN} (C_a)_{3n-1}$ as an $R_a$-module. The monomials $m$ and $\frac{m}{t_b}$ for $b\in \Lambda$ are multiples of $x_1x_2x_3\in (R_a)_1 = (C_a)_3$. Therefore $\Phi_{R_a}(F_*x_1x_2x_3-):F_*\omega_{R_a}\to \omega_{R_a}$ is surjective. The ring $(R_a)_{x_1x_2x_3}$ is strongly $F$-regular by the above. Hence there is $n\in\NN$ so that $(x_1x_2x_3)^n$ is a parameter test element of $R_a$. Let $e_0\in\NN$ be chosen so that $2^{e_0}-1\geq n$ and let $\Phi_{R_a}^{e_0}\in\Hom_{R_a}(F^{e_0}_*\omega_{R_a},\omega_{R_a})$ the $e_0$th iterate of the Frobenius dual map. Then $\Phi^{e_0}_{R_a}(F^{e_0}_*(x_1x_2x_3)^{2^{e_0}-1}-):F^{e_0}_*\omega_{R_a}\to \omega_{R_a}$ is surjective. Consequently, $R_a$ is $F$-rational, see \cite[Proposition~2.3]{DSPS25}.
\end{proof}

\begin{lemma}
    \label{lemma: Not F-injective lemma}
    Let $k=\overline{\mathbb F_2}$ and $A=k[t_1,\ldots,t_6]$. Let $q_1,\ldots,q_6$ be an ordering of $x_1^2, x_1x_2, x_1x_3, x_2^2, x_2x_3, x_3^2$, $ f=y^2+\sum_{i=1}^6t_iq_i^2$ a degree $4$ element of $A[x_1,x_2,x_3,y]$, $C = A[x_1,x_2,x_3,y]/(f)$, and $R=(C)^{(3)}$. Let $h\in A$ be a nonzero linear form in the six variables $t_1,\ldots,t_6$, $\fn = (t_1,t_2,\ldots,t_6,x_1,x_2,x_3,y)C$, and $\fm = \fn\cap R$. Then for every $\epsilon\in \fn^{[2]}\cap R$, $R/(h+\epsilon)$ is not $F$-injective.
\end{lemma}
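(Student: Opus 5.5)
The plan is to copy the argument for the $\PP^{19}$ example: exhibit a parameter ideal $J$ of $D:=R_\fm/(h+\epsilon)$ and an element $\xi\in J^F\smallsetminus J$, then conclude by \cite[Corollary 3.9]{QS17}. Since $h+\epsilon\in\fm$ has nonzero linear part $h$ in the $t_i$, it is a nonzerodivisor, and $D$ is an eight-dimensional Cohen--Macaulay local ring (the analogue of \cref{lem:SFR-away-from-Z}\ref{lem:SFR-away-from-Z-CM} holds for this $R$). Write $h=\sum_j b_jt_j$ and fix $j_0$ with $b_{j_0}\neq 0$. Put
\[
J:=(t_j \mid j\neq j_0)D+(x_1^3,x_2^3,x_3^3)D .
\]
To see that $J$ is a parameter ideal, I would use the relation $b_{j_0}t_{j_0}=h+\epsilon-\sum_{j\neq j_0}b_jt_j-\epsilon$. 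Modulo $J$ this says $t_{j_0}\in \fm^2$ up to units, because $\epsilon\in\fn^{[2]}$. So $\fm D$ is nilpotent modulo $J$, and $D/J$ is artinian.

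For the candidate element, let $w=x_1^2x_2^2x_3^2$, $w_j=w/q_j$, $\beta_j=\sqrt{b_j}$, and $\xi:=y\sum_j\beta_jw_j$. Each $yw_j$ has degree $6$, so $\xi\in R$. Exactly as in \eqref{eq:xi-h1}, using $y^2=\sum_i t_iq_i^2$ in $C$, we get
\[
\xi^2=hw^2+\sum_{i\neq j}t_ib_j(q_iw_j)^2 .
\]
The cross terms lie in $(x_1^6,x_2^6,x_3^6)R$ by the factorization \eqref{eq:monomial-factor-in-R}. In $D$ we have $hw^2=\epsilon w^2$, since we are in characteristic $2$. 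It therefore suffices to show $\epsilon w^2\in J^{[2]}$.

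\emph{Showing $\epsilon w^2\in J^{[2]}$.} Since $\epsilon\in\fn^{[2]}\cap R$, write $\epsilon=\sum_i t_i^2u_i+\sum_i x_i^2v_i+y^2s$ in $C$. Projecting onto the graded pieces of degree divisible by $3$ (an $R$-linear retraction $C\to R$), we may take $u_i\in R$, $\deg v_i\equiv 1$ and $\deg s\equiv 2 \pmod 3$. Then:
\begin{itemize}
\item $x_i^2v_iw^2=x_i^6\cdot(v_iw^2/x_i^4)$, where the second factor has degree divisible by $3$, so this term lies in $(x_i^6)R$.
\item $y^2sw^2=\sum_it_i(q_i^2w^2)s$. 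Here $q_i^2w^2$ is divisible by some $x_r^6$ with cubic-compatible cofactor, so this term also lies in $(x_1^6,x_2^6,x_3^6)R$.
\item $t_i^2u_iw^2$ lies in $J^{[2]}$ for every $i\neq j_0$.
\end{itemize}
The only problematic term is $t_{j_0}^2u_{j_0}w^2$. For it, substitute $t_{j_0}^2=b_{j_0}^{-2}\bigl(\sum_{j\neq j_0}b_j^2t_j^2+\epsilon^2\bigr)$ in $D$. This turns the term into something in $J^{[2]}+\epsilon^2w^2D$. Iterating the same analysis with $\epsilon^2\in\fn^{[4]}$ in place of $\epsilon$ gives $\epsilon w^2\in J^{[2]}+\fm^N D$ for every $N$. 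Since $D/J^{[2]}$ is artinian, Krull's intersection theorem then gives $\epsilon w^2\in J^{[2]}$. Hence $\xi^2\in J^{[2]}$, and so $\xi\in J^F$.

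\emph{Showing $\xi\notin J$.} As in the $\PP^{19}$ proof, I would compute $C/(J+(h+\epsilon))C$. Set the $t_j$ with $j\neq j_0$ to zero and use the relation to eliminate $t_{j_0}$ (it becomes a nilpotent element expressible through $\epsilon$). The quotient should reduce to $k[x_1,x_2,x_3,y]/(x_1^3,x_2^3,x_3^3,y^2+(\text{terms in }(x)^6))$, which is isomorphic to $E$, after arguing that the $t_{j_0}$-contributions die modulo $(x^3)$. Since the projection $C\to R$ gives $R/I\hookrightarrow C/IC$, the monomial $yx_1^2x_2x_3$ (or whichever $yw_j$ has $\beta_j\neq0$) survives, so $\xi\notin J$.

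I expect the main obstacle to be the $t_{j_0}$-terms. Both in the membership $\epsilon w^2\in J^{[2]}$ and in identifying $C/IC$ with $E$, $t_{j_0}$ is only implicitly determined via $\epsilon$. I would first try to handle this cleanly by a completion-based substitution: in $\widehat{D}$, solve for $t_{j_0}$ as a power series in the other generators. Only if that fails would I fall back on the Krull-intersection iteration sketched above.
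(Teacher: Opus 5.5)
Your route is different from the paper's and it is viable. The first half is sound, but the non-membership $\xi\notin J$ is asserted rather than proved, and the reason you sketch for it would not suffice.

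\textbf{Comparison with the paper.} The paper works on the dual side. It identifies $\omega_R=\bigoplus_n C_{3n-1}$ and takes the Frobenius dual of $R$ to be the restriction of $\Phi(F_*f\,-)$. It then lifts the Frobenius dual of $R/(h+\epsilon)$ to $\Phi_R(F_*(h+\epsilon)\,-)$ via the Fedder-type proposition. Finally it shows that the image modulo $\fm\omega_R$ lies in the kernel of the functional $\lambda$ given by $\overline{y}\mapsto0$, $\overline{q_i}\mapsto\sqrt{\delta_i}$, since everything collapses to $\varphi(h\,\Phi(F_*g))=0$. Your $\xi=y\sum_j\sqrt{b_j}\,w_j$ is the Matlis-dual counterpart of $\lambda$ (note $q_j\cdot yw_j=yw$).

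In the dual formulation the perturbation costs nothing, because $\Phi_R(F_*(\epsilon\,\omega_R))\subseteq\fm\omega_R$. Your formulation avoids canonical modules of the Veronese and the lifting proposition, but you must control $\epsilon$ by hand twice.

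\textbf{The first control is fine.} Your argument for $\epsilon w^2\in J^{[2]}$ is correct, and the iteration can be replaced by one line. Modulo $J^{[2]}$ you get $\epsilon w^2\equiv b_{j_0}^{-2}\epsilon u_{j_0}\cdot\epsilon w^2$, and $1-b_{j_0}^{-2}\epsilon u_{j_0}$ is a unit of $D$. There is one small slip: modulo $J$, $t_{j_0}$ need not lie in $\fm^2$ (take $\epsilon=x_1^2x_2$). Still, $J$ is a parameter ideal. Indeed $R_+\subseteq\sqrt{J}$, and modulo $R_+$ the relation reads $t_{j_0}(b_{j_0}+t_{j_0}a)=0$ with $a\in k[t_{j_0}]$.

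\textbf{The gap: showing $\xi\notin J$.} Terms in $(x)^6$ do not die modulo $(x_1^3,x_2^3,x_3^3)$; $w$ itself is such a term. Moreover, eliminating $t_{j_0}$ at best gives a surjection $E\twoheadrightarrow C_\fn/IC_\fn$. That surjection does not tell you that $y\sum_j\beta_jw_j$ survives.

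The missing idea is to use $\epsilon\in R$. After the retraction, each $v_i$ has components only in degrees $\equiv1\pmod 3$. Since $C_n=B_n\oplus yB_{n-2}$, this forces $v_i\in(x_1,x_2,x_3)C$. Modulo $(t_j\mid j\neq j_0)$ you then have
\[
h+\epsilon\equiv t_{j_0}\bigl(b_{j_0}+t_{j_0}u_{j_0}+q_{j_0}^2s\bigr)+\sum_{i=1}^3x_i^2v_i ,
\]
where the bracket is a unit at $\fn$. Hence, locally, $t_{j_0}\in(x)^3$ and $y^2=t_{j_0}q_{j_0}^2\in(x)^7\subseteq(x_1^3,x_2^3,x_3^3)$. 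This is exactly where $\epsilon\in R$ enters: for $\epsilon=x_1^2y\notin R$, with $h=t_{j_0}$ and $q_{j_0}=x_2x_3$, one would instead get $y^2=yw$.

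To see that $\xi$ survives, build a map in the other direction.
\begin{enumerate}[label=(\roman*)]
\item Choose lifts of the $v_i$ in $(x)$. By Hensel's lemma in the Artinian local ring $E$, the displayed relation has a root $t_{j_0}=\tau\in\fm_E$.
\item Necessarily $\tau\in(x)^3E$, so $\tau q_{j_0}^2=0$.
\item The assignment $t_j\mapsto0$ for $j\neq j_0$, $t_{j_0}\mapsto\tau$, $x_i\mapsto x_i$, $y\mapsto y$ then defines a local homomorphism $C_\fn\to E$. It kills $f$ and $I$, and sends $\xi$ to $y\sum_j\beta_jw_j\neq0$.
\end{enumerate}
This gives $\xi\notin J$. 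Your completion/implicit-function idea amounts to this construction, but it only goes through once the degree-mod-$3$ observation is in place.
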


\begin{proof}
    Identify $\omega_R = \oplus_{n\geq 1} C_{3n-1}$ as in the proof of \cref{lem: F rationality of Veronese general} and identify the Frobenius dual $\Phi_R\in\Hom_R(F_*\omega_R,\omega_R)$ as the Frobenius dual of $C$ with the restricted domain. We will first show $R_{\fm}/hR_{\fm}$ is not $F$-injective. Let $\Phi$ denote the standard Frobenius dual of $A[x_1,x_2,x_3,y]$ with respect to the $p$-basis $\{t_1,\ldots,t_6,x_1,x_2,x_3,y\}$. By \cref{proposition: Fedder for Frobenius dual}, the Frobenius dual of $R/(h)$ lifts to $\Phi_R(F_*h-):F_*\omega_R\to \omega_R$. If $g\in A[x_1,x_2,x_3,y]$ is the lift of an element of $\omega_R$, then $\Phi_R(F_*g)$ lifts to $\Phi(F_*fg) = y\Phi(F_*g) + \sum_{i=1}^6q_i\Phi(F_*t_ig)$. In particular, the lift of an element $\Phi_R(F_*hg)$ in $A[x_1,x_2,x_3,y]$ is of the form
    \begin{align*}
    \Phi_R(F_*hg) &= y\Phi(F_*hg) + \sum_{i=1}^6q_i\Phi(F_*t_ihg).
    \end{align*}
    By the grading, the coefficients on the right belong to $A[x_1,x_2,x_3,y]^{(3)}$, so their classes modulo $(f)$ belong to $R$.
    
    As vector space over $R/\fm$, $\omega_R/\fm\omega_R$ is seven-dimensional with basis $\{\overline{y},\overline{q_1},\ldots,\overline{q_6}\}$. If $\varphi: A[x_1,x_2,x_3,y]\to k$ is the evaluation at the origin map, then the image of $\Phi_R(F_*hg)$ in $\omega_R/\fm\omega_R$ is
    \[
    \overline{\Phi_R(F_*hg)} = \varphi(\Phi(F_*hg))\overline{y} +  \sum_{i=1}^6\varphi(\Phi(F_*t_ihg))\overline{q_i}.
    \]

    Assume that $h=\sum_{i=1}^6\delta_it_i$ and let $\lambda: \omega_R/\fm\omega_R\to k$ be the linear transformation defined by $\overline{y}\mapsto 0$ and $\overline{q_i}\mapsto \sqrt{\delta_i}$. Then
    \begin{align*}
    \lambda\left(\overline{\Phi_R(F_*hg)}\right) &= \sum_{i=1}^6 \varphi(\Phi(F_*t_ihg))\sqrt{\delta_i}\\
    &=\varphi\left(\Phi\left(F_*\sum_{i=1}^6\delta_it_ihg\right)\right)\\
    &=\varphi(\Phi(F_*h^2g))\\
    &=\varphi(h\Phi(F_*g))\\
    &=0.
    \end{align*}
    Therefore for every $g\in \omega_R$, $\overline{\Phi_R(F_*hg)}$ belongs to the kernel of $\lambda: \omega_R/\fm\omega_R\to k$ and hence $\Phi_R(F_*h-)$ is not surjective. Equivalently, $R/(h)$ is not $F$-injective.

    Let $\epsilon\in \fn^{[2]}\cap R$ and abuse notation and let $\epsilon\in \fn^{[2]}\cap (A[x_1,x_2,x_3,y])^{(3)}$ denote a lift of $\epsilon$ in $A[x_1,x_2,x_3,y]$. Since $\Phi_R(F_*(\epsilon\omega_R))\subseteq \fm \omega_R$, we can repeat the above analysis with $h+\epsilon$ to find that for any $g\in \omega_R$,
    \[
    \lambda(\overline{\Phi_R((h+\epsilon)g)})= \lambda(\overline{\Phi_R(hg)}) = 0,
    \]
    i.e., $\overline{\Phi_R((h+\epsilon)g)}\in \ker(\lambda)$ and $\Phi_R(F_*h+\epsilon-)$ cannot be surjective. Because $h$ is a linear form in $t_1,\ldots,t_6$, $h + \epsilon$ is regular on $R$, by \cref{proposition: Fedder for Frobenius dual}, $\Phi_R(F_*h+\epsilon-)$ is the lift of the Frobenius dual of $R/(h+\epsilon)$ and $R/(h+\epsilon)$ is not $F$-injective.
\end{proof}

\subsection{Projective Compactification}\label{section Projective model}

We continue using the notation of \cref{section affine charts} in addition to the following notation and assigned bigrading.
\begin{itemize}
    \item $\displaystyle \mathcal{A}=k[T_a\mid a\in\Lambda]$ with $T_a$ variables of degree $(1,0)$.
    \item $\displaystyle \mathcal{B} = \mathcal{A}[x_1,x_2,x_3,y,z]$ with $x_1,x_2,x_3$ variables of degree $(0,1)$, $y$ a variable of degree $(0,2)$, and $z$ a variable of degree $(1,3)$.
    \item $F = \sum_{a\in\Lambda}T_a m_a^2\in \mathcal{B}$, a homogeneous equation of degree $(1,4)$.
    \item $\displaystyle \mathcal{C} = \frac{\mathcal{B}}{(F)}$.
    \item $\displaystyle \mathcal{R} = \bigoplus_{n\in\NN} \mathcal{C}_{(3n,6n)}$.
\end{itemize}

\begin{lemma}
    \label{lem: script R standard graded F-rational}
    $\mathcal{R}$ is a standard graded $k$-algebra.
\end{lemma}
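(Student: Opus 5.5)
We must show $\mathcal{R}_0=k$ and that $\mathcal{R}$ is generated as a $k$-algebra by $\mathcal{R}_1=\mathcal{C}_{(3,6)}$. The first claim is immediate, since $\mathcal{C}_{(0,0)}=k$. For generation, it suffices to prove the statement already at the level of $\mathcal{B}$. The relation $F$ is bihomogeneous, so $\mathcal{B}_{(3n,6n)}\to\mathcal{C}_{(3n,6n)}$ is surjective, and it is enough to show that every bihomogeneous monomial of $\mathcal{B}$ of bidegree $(3n,6n)$ with $n\ge 1$ factors as a product of $n$ monomials of bidegree $(3,6)$.

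\textbf{Step 1: describe the monomials.} A monomial $T^\gamma x^\alpha y^\beta z^c$ has bidegree $(|\gamma|+c,\ |\alpha|+2\beta+3c)$. Requiring this to equal $(3n,6n)$ gives
\[
|\gamma|=3n-c,\qquad |\alpha|+2\beta=6n-3c,\qquad 0\le c\le 3n.
\]
Since $|\gamma|$ and $|\alpha|+2\beta$ are independent, the monomial is $z^c\cdot T^\gamma\cdot u$, where $T^\gamma$ is any $T$-monomial of degree $3n-c$ and $u\in\mathcal{E}_{6n-3c}$ is any monomial of degree $6n-3c$ in $x_1,x_2,x_3,y$.

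\textbf{Step 2: split off one factor of bidegree $(3,6)$.} I argue by induction on $n$. Write $c=3s+r$ with $0\le r\le 2$. If $c\ge 3$, split off $z^3$, which has bidegree $(3,9)$. That is too much second degree, so instead I peel off factors adapted to the value of $c$:
\begin{itemize}
\item $z^2T_a\,m$ with $m$ of degree $0$ has bidegree $(3,6)$;
\item $zT_aT_b\,u'$ with $u'$ of degree $3$ has bidegree $(3,6)$;
\item $T_aT_bT_c\,u''$ with $u''$ of degree $6$ has bidegree $(3,6)$.
\end{itemize}
So the generators of $\mathcal{R}_1$ are the monomials $z^jT^{\gamma}v$ with $j\in\{0,1,2\}$ (note $j=3$ would force $|\gamma|=0$ and degree $-3$, which is impossible), $|\gamma|=3-j$, and $v\in\mathcal{E}_{6-3j}$.

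Given the monomial from Step 1, I choose $j=\min(c,2)$. The condition $c\le 3n$ together with $|\gamma|=3n-c$ lets me pick $3-j$ of the $T$-variables, provided $3n-c\ge 3-j$. This holds when $n\ge 2$, or when $n=1$, where the monomial is itself a generator. The remaining monomial then has bidegree $(3(n-1),6(n-1))$ with $z$-exponent $c-j$, and $T$-degree $3(n-1)-(c-j)$, which is nonnegative by the same count. I must also split $u$ as $v\cdot u_{\rm rest}$ with $v\in\mathcal{E}_{6-3j}$ and $u_{\rm rest}\in\mathcal{E}_{6(n-1)-3(c-j)}$.

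\textbf{The main obstacle: divisibility within $\mathcal{E}$.} The delicate point is that a monomial in $\mathcal{E}_N$ need not be divisible by some element of $\mathcal{E}_M$ when $M\le N$. Because $y$ has degree $2$, a pure power of $y$ admits no divisor of odd degree. This matters only when $6-3j$ is odd, namely $j=1$ with target degree $3$.

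To handle it, I will choose $j$ more flexibly. When $u$ is a pure power of $y$, I take $j\in\{0,2\}$ with the same parity as $c$. Since $6n-3c$ is then even, $c$ is even, so $j=0$ or $j=2$ is available (with $c\ge 2$ when $j=2$). I then check that the remaining $z$-exponent and degree constraints still allow continuation. Outside this pure-$y$ case, and whenever the degree of the piece split from $u$ has the right parity, a monomial in $x_1,x_2,x_3,y$ of degree $N\ge M$ whose $x$-part has degree $\ge 1$ has a divisor of every degree $M\le N$ with $M\equiv N$ or with an available $x$. This elementary lemma finishes the induction, and with it the proof that $\mathcal{R}$ is generated in degree one.
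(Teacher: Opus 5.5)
Your proposal is correct and follows essentially the paper's argument. Both reduce to monomials of $\mathcal{B}$ and peel off a factor $T_az^2$, $T_aT_bzm$ with $m\in\mathcal{E}_3$, or $T_aT_bT_cm$ with $m\in\mathcal{E}_6$, according to whether the $z$-exponent is $\geq 2$, $=1$, or $=0$, and both use parity of the $x$-degree to find the required divisor in $\mathcal{E}$.

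Some tidying is needed:
\begin{itemize}
\item The count $3n-c\geq 3-j$ comes from $c\leq 2n$, which is forced by $6n-3c\geq 0$, not from $c\leq 3n$.
\item Since $c=1$ makes $|\alpha|$ odd, the rule $j=\min(c,2)$ never meets the pure-$y$ obstruction. So the ``flexible'' choice of $j$ and the promised continuation check are unnecessary: the remainder is just a monomial of bidegree $(3(n-1),6(n-1))$, and induction covers it.
\end{itemize}
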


\begin{proof}
    Consider a monomial term $\prod_{a\in\Lambda}T^{\nu_a}x_1^{a_1}x_2^{a_2}x_3^{a_3}y^bz^d$ of degree $n \geq 2$. Then $\sum_{a\in\Lambda}\nu_a + d = 3n$ and $a_1+a_2+a_3 +2b + 3d = 6n$. If $d\geq 2$, then $\left(\prod_{a\in\Lambda}T^{\nu_a}_ax_1^{a_1}\right)x_2^{a_2}x_3^{a_3}y^bz^d$ is divisible by a monomial $T_az^2\in \left(\mathcal{R}\right)_1$. If $d = 1$ then $a_1+a_2+a_3+2b = 6n-3\geq 3$ and $a_1+a_2+a_3+2b$ is odd. Therefore $x_1^{a_1}x_2^{a_2}x_3^{a_3}y^b$ is divisible by a monomial in $m\in\mathcal{E}_3$. By degree considerations, the monomial $\prod_{a\in\Lambda}T^{\nu_a}x_1^{a_1}x_2^{a_2}x_3^{a_3}y^bz^d$  is divisible by a monomial $T_aT_bzm$ for some $a,b\in \Lambda$ and $m\in\mathcal{E}_3$. Lastly, if $d = 0$ then $x_1^{a_1}x_2^{a_2}x_3^{a_3}y^b$ is divisible by a monomial in $\mathcal{E}_6$ and $\prod_{a\in\Lambda}T^{\nu_a}x_1^{a_1}x_2^{a_2}x_3^{a_3}y^b$ is divisible by a monomial of the form $T_aT_bT_cm$ for some $a,b,c\in\Lambda$ and $m\in\mathcal{E}_6$. This shows every monomial element of $\left(\mathcal{R}\right)_n$ with $n\geq 2$ is divisible by a monomial term of $\left(\mathcal{R}\right)_1$. Therefore $\mathcal{R}$ is indeed standard graded over $k$.
\end{proof}

Let $\mathfrak{X} = \Proj_k(\mathcal{R})$ denote the corresponding projective variety. We next prove that $\mathfrak{X}$ is $F$-rational and there is a closed immersion $\mathfrak{X}\hookrightarrow\PP^{2547}_k$ so that the intersection of a general hyperplane $H$ of $\PP^{2547}_k$ with $\mathfrak{X}$ is not $F$-injective. This is accomplished by identifying $2548$ monomial elements $V$ of $\left(\mathcal{R}\right)_1$ so that the associated linear system is very ample. Before this, we collect some observations about the affine charts $\cO_\mathfrak{X}(D_+(W)) \cong \left(\mathcal{R}[W^{-1}]\right)_0 \cong \left(\mathcal{C}[W^{-1}]\right)_{(0,0)}$ where $W\in \left(\mathcal{R}\right)_1$ is a monomial.

\begin{remark}\label{Remark on charts of X}
    Let $W\in\left(\mathcal{R}\right)_1$ be a monomial. Then $W$ is divisible by $T_a$ for some $a\in\Lambda$. The affine section ring $\cO_\mathfrak{X}(D_+(W))$ is realized as a homogeneous localization of an algebra that adjoins fractions to $\mathcal{C}[T_a^{-1}]$. For every $b\in \Lambda$ set $t_b = \frac{T_b}{T_a}$. Then $F = T_a\left(m_a^2 +\sum_{b\in\Lambda\setminus\{a\}}t_bm_b^2\right)$, $T_a$ is a unit of $\mathcal{C}[T_a^{-1}]$, and so 
    \[
    \mathcal{C}[T_a^{-1}]\cong C_a\left[z,T_a,T_a^{-1}\right]
    \]
    where $C_a$ is the hypersurface ring from \cref{section affine charts}. The bidegree of $\mathcal{C}$ persists through the isomorphism. For each $b\in \Lambda$, the bidegree of $t_b$ is $(0,0)$, $x_1,x_2,x_3$ have bidegree $(0,1)$, $y$ has bidegree $(0,2)$, and $z$ has bidegree $(1,3)$. Therefore for every monomial $W\in\left(\mathcal{R}\right)_1$, if $T_a$ divides $W$, we make an identification of the corresponding affine chart
    \[
    \left(\mathcal{R}[W^{-1}]\right)_0 \cong \left(\mathcal{C}[W^{-1}]\right)_{(0,0)} \cong \left(C_a\left[z,T_a,T_a^{-1}, W^{-1}\right]\right)_{(0,0)}.
    \]
\end{remark}

\begin{lemma}
    \label{lem: An affine cover of X}
    Let $\mathfrak{X} = \Proj_{k}(\mathcal{R})$ and
    \[
    V'=\{T_az^2\mid a\in\Lambda\}\cup \{T_a^3y^3\mid a\in\Lambda\}\cup\{T_a^3x_i^6\mid a\in\Lambda\mbox{ and }1\leq i\leq 3 \}.
    \]
    Then $V'$ is a collection of monomials of $\left(\mathcal{R}\right)_1$ whose associated linear system is base point free.
\end{lemma}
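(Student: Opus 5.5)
First I check that each element of $V'$ lies in $(\mathcal{R})_1=\mathcal{C}_{(3,6)}$ by computing bidegrees. $T_az^2$ has bidegree $(1,0)+2(1,3)=(3,6)$. $T_a^3y^3$ has bidegree $(3,0)+(0,6)=(3,6)$. $T_a^3x_i^6$ has bidegree $(3,0)+(0,6)=(3,6)$. So all of them are monomials of $(\mathcal{R})_1$.

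For base point freeness, it suffices to show that the ideal $(V')\mathcal{R}$ has radical equal to the irrelevant ideal $\mathcal{R}_+$. Equivalently, every homogeneous prime $P\subseteq\mathcal{R}$ containing $V'$ contains $\mathcal{R}_+$. Since $\mathcal{R}\subseteq\mathcal{C}$ is the diagonal-type Veronese $\bigoplus_n\mathcal{C}_{(3n,6n)}$, it is a direct summand, and $\mathcal{C}$ is integral over it. Indeed, any bihomogeneous $c\in\mathcal{C}_{(i,j)}$ satisfies $c^{N}\in\mathcal{R}$ whenever $(Ni,Nj)$ lies on the ray $(3n,6n)$. This holds only when $j=2i$, so I will instead argue via monomials.

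I will show that every monomial $\mu\in(\mathcal{R})_1$ has a power lying in $(V')\mathcal{R}$. Since $\mathcal{R}$ is standard graded by \cref{lem: script R standard graded F-rational}, $\mathcal{R}_+$ is generated by these monomials, which gives the radical claim.

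Write $\mu=\prod_a T_a^{\nu_a}x_1^{a_1}x_2^{a_2}x_3^{a_3}y^bz^d$ with $\sum\nu_a+d=3$ and $a_1+a_2+a_3+2b+3d=6$. Consider $\mu^{N}$ for large $N$ divisible by $6$. Its $T$-part has total degree $(3-d)N$, and its $z$-part is $z^{dN}$. I will split into cases on $d$.

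If $d\geq 1$, then $z^{dN}$ supplies $z^2$ factors. I pair them with the $T$'s: $\mu^N$ contains $z^{dN}$ and $T$-degree $(3-d)N$, so when $d=3$ I need extra $T$'s. Here I use that $\mathcal{R}$ is a $\mathcal{C}$-submodule only up to degree, so it is cleaner to work in $\mathcal{C}$. I show $\mu^N\in (V')\mathcal{C}\cap\mathcal{R}$, and then pass from $\mathcal{C}$ to $\mathcal{R}$ using the Reynolds-type projection $\mathcal{C}\to\mathcal{R}$ onto bidegrees $(3n,6n)$. This projection is $\mathcal{R}$-linear, so $(V')\mathcal{C}\cap\mathcal{R}=(V')\mathcal{R}$.

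In $\mathcal{C}$ the argument reduces to radicals. The ideal $(V')\mathcal{C}$ contains, up to radical, $\bigl(T_az^2, T_a^3 y^3, T_a^3x_i^6\bigr)$, and its radical contains $(T_a)\cdot(z,y,x_1,x_2,x_3)$. So a prime containing $V'$ contains all $T_a$, or contains all of $z,y,x_1,x_2,x_3$. In the first case, the relation $F=\sum T_am_a^2$ gives nothing new, and every element of $\mathcal{R}_+$ has $T$-degree $3n-d'$ with $d'\le 2n$ the $z$-exponent. In bidegree $(3n,6n)$ the $z$-exponent is at most $2n$, forcing positive $T$-degree, so $\mathcal{R}_+\subseteq\sqrt{(T_a)}$. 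In the second case, $\mathcal{R}_+$ lies in $(x,y,z)$ since the second degree $6n>0$.

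The main obstacle is making this contraction from $\mathcal{C}$ to $\mathcal{R}$ rigorous. I would handle it by noting that $\mathcal{R}\to\mathcal{C}$ is integral on the relevant localizations, because $\mathcal{C}$ is a module over the Veronese and radicals contract along direct summands. Concretely, $\sqrt{(V')\mathcal{C}}\cap\mathcal{R}=\sqrt{(V')\mathcal{R}}$ holds because $r^n\in(V')\mathcal{C}\cap\mathcal{R}=(V')\mathcal{R}$. Thus $\mathcal{R}_+\subseteq\sqrt{(V')\mathcal{R}}$, and the linear system is base point free.
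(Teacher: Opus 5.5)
Your final argument is correct, but it is packaged differently from the paper's.

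The paper stays inside $\mathcal{R}$ and works with monomials. A monomial of bidegree $(3n,6n)$ with $n\ge 1$ is divisible by some $T_a$, since its $z$-exponent is at most $2n$. It is also divisible by one of $x_1,x_2,x_3,y,z$. So a suitable power is divisible by $T_az^2$, $T_a^3y^3$ or $T_a^3x_i^6$, and the cofactor again has bidegree on the ray $(3m,6m)$, hence lies in $\mathcal{R}$.

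You instead compute in the ambient ring $\mathcal{C}$:
\begin{itemize}
\item $\sqrt{(V')\mathcal{C}}$ contains the product ideal $(T_a\mid a\in\Lambda)\cdot(x_1,x_2,x_3,y,z)$.
\item $\mathcal{R}_+\subseteq(T_a\mid a\in\Lambda)\mathcal{C}\cap(x_1,x_2,x_3,y,z)\mathcal{C}$, so every $r\in\mathcal{R}_+$ has $r^2\in\sqrt{(V')\mathcal{C}}$.
\item You descend using the $\mathcal{R}$-linear projection $\pi\colon\mathcal{C}\to\mathcal{R}$ onto the bidegrees $(3n,6n)$. This gives $(V')\mathcal{C}\cap\mathcal{R}=(V')\mathcal{R}$, and hence $\sqrt{(V')\mathcal{C}}\cap\mathcal{R}=\sqrt{(V')\mathcal{R}}$.
\end{itemize}

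The key observation is the same in both: divisibility by some $T_a$ and by some generator of positive second degree. Your version makes explicit the descent step that the paper leaves implicit, namely that the cofactor lies in $\mathcal{R}$. It also handles arbitrary elements of $\mathcal{R}_+$ at once, so you do not need the standard-gradedness lemma. The paper's version is shorter and never leaves $\mathcal{R}$.

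When you write it up, remove the false or unused remarks:
\begin{itemize}
\item $\mathcal{C}$ is not integral over $\mathcal{R}$; for instance $\dim\mathcal{C}=11$ while $\dim\mathcal{R}=10$. No integrality is needed, because your sentence beginning \emph{Concretely} already proves the contraction of radicals via $\pi$.
\item The case $d=3$ cannot occur, since $a_1+a_2+a_3+2b+3d=6$ forces $d\le 2$.
\item The abandoned case analysis on $d$ can be dropped entirely.
\end{itemize}
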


\begin{proof}
    It is clear each element of $V'$ belongs to $\left(\mathcal{R}\right)_1$. Let $n\geq 1$ and $W\in \left(\mathcal{R}\right)_n = \mathcal{C}_{(3n,6n)}$ a monomial. In addition to being divisible by some $T_a$, a monomial of degree $(3n,6n)$ is divisible by $z,y,$ or some $x_i$ with $1\leq i\leq 3$. Therefore a large enough power of a monomial of degree $(3n,6n)$ with $n\geq 1$ is divisible by a monomial of degree $(3,6)$ of the form $T_az^2, T_a^3y^3$ or $T_a^3x_i^6$ for some $a\in\Lambda$ and $1\leq i\leq 3$. It follows that the projective variety $X$ is covered by the affine open sets defined by the non-vanishing of the collection degree $(3,6)$ elements in $V'$. Equivalently, the associated linear system of $V'$ is base point free.
\end{proof}

\begin{corollary}
    \label{cor: X is F-rational}
    Let $\mathfrak{X} = \Proj_k(\mathcal{R})$.
    \begin{enumerate}[label=(\alph*)]
        \item For each closed point $x\in \mathfrak{X}$, the embedding dimension $\dim_k(\fm_x/\fm_{x}^2)\leq 20$.
        \item The projective variety $\mathfrak{X}$ is $F$-rational.
    \end{enumerate}
\end{corollary}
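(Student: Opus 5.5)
Both parts will be checked chart by chart on the affine cover of \cref{lem: An affine cover of X}. For each monomial $W\in V'$, pick $a\in\Lambda$ with $T_a\mid W$. By \cref{Remark on charts of X}, $\cO_\mathfrak{X}(D_+(W))\cong (C_a[z,T_a,T_a^{-1},W^{-1}])_{(0,0)}$. The plan is to identify each such chart with a localization of $R_a$, possibly times a Laurent variable, so that \cref{lem: F rationality of Veronese general} applies directly.

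\textbf{Identifying the charts.} If $W=T_az^2$, then $z$ is a unit. The element $u=z/T_a$ has bidegree $(0,3)$, so multiplying any bidegree $(0,0)$ fraction by the appropriate power of $u$ puts it in $C_a$ of degree divisible by $3$. Concretely, an element of $C_a$ of $x$-degree $3n$ corresponds to $g\,u^{-n}$. This should give $\cO_\mathfrak{X}(D_+(T_az^2))\cong R_a[w^{\pm1}]$-type data, more precisely $\cong (C_a[u,u^{-1}])_0\cong \bigoplus_{n}(C_a)_{3n}u^{-n}$ restricted to $n\ge 0$ by the remaining denominators. I expect it to be exactly $R_a$, realized as the algebra generated over $A_a$ by $(C_a)_3u^{-1}$. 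If $W=T_a^3y^3$ or $T_a^3x_i^6$, then $z/T_a$ is not inverted. Write $v=y^3$ or $x_i^6$, so $v\in (R_a)_2$. The chart becomes $(R_a[v^{-1}])_0$-like pieces together with the extra element $zT_a^{-1}\cdot(\text{degree }(0,-3))$. This is a polynomial ring in one new variable over the degree-zero homogeneous localization of a Veronese of $C_a$. All points of such a chart lie outside $V((R_a)_+)$ of the corresponding Veronese, since $v$ is inverted. So by \cref{lem: F rationality of Veronese general}\ref{lem: F rationality of Veronese general SFR locus} these charts are smooth over $k$, being a polynomial extension of a Dehomogenization of smooth data.

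\textbf{Part (b).} Regularity passes to polynomial extensions and to localizations. The charts $D_+(T_az^2)$ are (localizations of) $R_a$, which are $F$-rational by \cref{lem: F rationality of Veronese general}\ref{lem: F rationality of Veronese general F-rational}; $F$-rationality localizes. Since the charts cover $\mathfrak{X}$, $\mathfrak{X}$ is $F$-rational.

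\textbf{Part (a).} At smooth points the embedding dimension is $9\le 20$. At points of the charts $D_+(T_az^2)$, \cref{lem: F rationality of Veronese general}\ref{lem: F rationality of Veronese general SFR locus} shows that points off $V((R_a)_+)$ are smooth. At a point of $V((R_a)_+)$, the maximal ideal is generated by the six $t_b-\tau_b$ and the thirteen degree-three generators of $(C_a)_3$. Since $|\mathcal{E}_3|=10+3=13$, this gives at most $19\le 20$ generators. The main obstacle is making the chart identification precise, especially verifying that the inverted monomials in $D_+(T_az^2)$ do not introduce extra generators beyond $R_a$. I would check this first on monomials, using the divisibility argument from \cref{lem: script R standard graded F-rational}.
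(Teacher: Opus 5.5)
Your overall plan (chart by chart over $V'$, with $D_+(T_az^2)\cong R_a$) matches the paper's, but two steps are wrong, and the first leaves a genuine hole in (b).

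\textbf{The charts $D_+(T_a^3y^3)$ are not smooth.} Your claim is fine for $W=T_a^3x_i^6$. There $x_i$ is a unit of degree one, so the chart is the polynomial ring $(C_a[x_i^{-1}])_0[z/(T_ax_i^3)]$, and $C_a[x_i^{-1}]\cong (C_a[x_i^{-1}])_0[x_i^{\pm1}]$ is regular. It fails for $W=T_a^3y^3$ when $a\neq a_0:=(0,0,0,1)$.

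Once $y$ is inverted, the relation $f_a=m_a^2+t_{a_0}y^2+\cdots$ can be solved for $t_{a_0}$. Hence $C_a[y^{-1}]\cong k[t_b\mid b\neq a,a_0][x_1,x_2,x_3,y^{\pm1}]$. The bidegree $(0,0)$ part of $C_a[z,T_a^{\pm1},y^{-1}]$ then has $k[t_b\mid b\neq a,a_0]$-basis the monomials $x^{\alpha}(z/T_a)^{\gamma}y^{-(|\alpha|+3\gamma)/2}$ with $|\alpha|+\gamma$ even. So the chart is $\A^5$ times the affine cone over the second Veronese embedding of $\PP^3$, which is singular along $\{x=z=0\}$. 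In particular it is not a polynomial ring over $(R_a[y^{-3}])_0$. Moreover, $(R_a[y^{-3}])_0$ is itself $\A^5$ times the cone over $v_2(\PP^2)$: smoothness of $\Spec R_a[y^{-3}]$ (part (c) of \cref{lem: F rationality of Veronese general}) does not descend to a dehomogenization at an element of degree $2$.

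These singular points, where $x=z=0$ and $T_{a_0}=0$, lie in no $D_+(T_bz^2)$ and no $D_+(T_b^3x_i^6)$. So your proof of (b) says nothing about them, and in (a) you implicitly give them embedding dimension $9$ when it is actually $5+10=15$. The missing ingredient is the paper's argument: the bidegree $(0,0)$ part is a direct summand of the regular ring $C_a[z,T_a^{\pm1},y^{-1}]$, hence strongly $F$-regular and so $F$-rational. For (a), count the explicit generators $t_b$, $x_ix_j/y$, $zx_i/(T_ay^2)$, $z^2/(T_a^2y^3)$, which gives $6+6+3+1=16$.

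\textbf{Your count of $19$ on $D_+(T_az^2)$ is wrong for $a\neq a_0$.} \cref{lem:SFR-away-from-Z} generates $R$ in degree three only because $y^2=\sum t_im_i^2$ holds there. For $a\neq a_0$, $y^3$ is not a product of two elements of $\mathcal{E}_3$, and the $y^2$-coefficient of $f_a$ is $t_{a_0}$. Consequently every lift to $B_a$ of a degree-$6$ element of $A_a[(C_a)_3]$ has $y^3$-coefficient in $t_{a_0}A_a$. So $R_a$ needs the extra $A_a$-algebra generator $y^3$; its generators are $\mathcal{E}_3\cup\{y^3\}$. The same coefficient argument, with $(t_b\mid b\neq a)$ in place of $t_{a_0}A_a$, shows $y^3\notin\fm^2$ at the origin of $R_a$. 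The correct count is therefore $6+13+1=20$, and it is attained at that point. The bound in (a) is sharp, not slack.
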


\begin{proof}
    Let $V'$ be the collection of monomials $(\mathcal{R})_{1}$ of \cref{lem: An affine cover of X} defining a base point free linear system. We will show that for each $W\in V'$, the associated affine chart $\cO_{\mathfrak{X}}(D_+(W))$ is a finitely generated $k$-algebra generated by at most $20$ elements and $\cO_{\mathfrak{X}}(D_+(W))$ is $F$-rational.

    Suppose first that $W = T_az^2$ so that by Remark~\ref{Remark on charts of X}
    \[
    \cO_\mathfrak{X}(D_+(T_az^2))
    \cong
    \left(C_a\left[z,T_a,T_a^{-1},(T_az^2)^{-1}\right]\right)_{(0,0)} = \left(C_a\left[z,z^{-1},T_a,T_a^{-1}\right]\right)_{(0,0)},
    \]
    Let $u$ be a homogeneous element of $C_a$ of degree $(0,\alpha)$ and suppose $uz^{\beta_1}T_a^{\beta_2}$ with $\beta_1,\beta_2\in \Z$ has bidegree $(0,0)$. Then $\beta_1+\beta_2 = 0$ and $\alpha + 3\beta_1 = 0$. Therefore there exists $n\geq 0$ so that $\alpha = 3n$ and $uz^{\beta_1}T_a^{\beta_2}$ is of the form $u\left(\frac{T_a}{z}\right)^{n}$ and $u\in (C_a)_{(0,3n)}$. 
    
    Let $n\geq 1$ and consider a monomial of bidegree $(0,0)$ takes the form
    \[
    x_1^{a_1}x_2^{a_2}x_3^{a_3}y^b\left(\frac{T_a}{z}\right)^c, \quad a_1+a_2+a_3+2b -3c = 0.
    \]
    Given such a term, $x_1^{a_1}x_2^{a_2}x_3^{a_3}y^b$ has weighted degree divisible by three. Factor out powers of $y^3\left(\frac{T_a}{z}\right)^2$. If the remaining $y$-exponent is $1$, factor out $yx_i\frac{T_a}{z}$ for some $1\leq i\leq 3$; if a remaining $y$-exponent is $2$, factor out $(yx_i)(yx_j)\left(\frac{T_a}{z}\right)^2$, allowing the possibility that $i=j$. The remaining $x$-monomial has degree divisible by three, so it factors into cubic monomials. Therefore as an $A_a$-algebra, $\cO_\mathfrak{X}(D_+(T_az^2))$ is generated by the monomials
    \[
    \left\{x_1^{a_1}x_2^{a_2}x_3^{a_3}\frac{T_a}{z}\mid a_1+a_2+a_3 = 3\right\} \bigcup \left\{y^3\left(\frac{T_a}{z}\right)^2\right\}\bigcup\left\{yx_i\frac{T_a}{z}\mid 1\leq i\leq 3\right\}.
    \]
    Combined with the six variables generating $A_a$ over $k$, $\cO_\mathfrak{X}(D_+(T_az^2))$ is a $k$-algebra generated by $6+10+1+3=20$ elements.
    
    Let $u$ be a bidegree homogeneous element of $C_a$ of degree $(0,\alpha)$ and suppose $uz^{\beta_1}T_a^{\beta_2}$ with $\beta_1,\beta_2\in \Z$ has bidegree $(0,0)$. Then $uz^{\beta_1}T_a^{\beta_2}$ is of the form $u\left(\frac{T_a}{z}\right)^{n}$ and $u\in (C_a)_{(0,3n)}$. It follows that $\cO_\mathfrak{X}(D_+(T_az^2)) \cong R_a$ with the isomorphism identifying a degree $(0,0)$ element $u\left(\frac{T_a}{z}\right)^{n}$ with $u\in R_a$. By \cref{lem: F rationality of Veronese general}\ref{lem: F rationality of Veronese general F-rational}, $\cO_\mathfrak{X}(D_+(T_az^2))$ is $F$-rational.

    Next consider the case $W = T_a^3y^3$ for some $a\in\Lambda$. By \cref{Remark on charts of X} we identify
    \[
    \cO_{\mathfrak{X}}(D_+(T_a^3y^3)) \cong \left(C_a\left[z,T_a,T_a^{-1}, (T_ay^3)^{-1}\right]\right)_{(0,0)} = \left(C_a\left[z,T_a,T_a^{-1}, y^{-1}\right]\right)_{(0,0)}.
    \]
    The algebras $(C_a)_{y}$ and $\left(C_a\left[z,T_a,T_a^{-1}, y^{-1}\right]\right)$ are regular, the latter of which admits $\left(C_a\left[z,T_a,T_a^{-1}, y^{-1}\right]\right)_{(0,0)}$ as a direct summand. Hence $\left(C_a\left[z,T_a,T_a^{-1}, y^{-1}\right]\right)_{(0,0)}$ is strongly $F$-regular and therefore $F$-rational.

    We next bound the number of generators of $\left(C_a\left[z,T_a,T_a^{-1}, y^{-1}\right]\right)_{(0,0)}$ as a $k$-algebra. Consider a monomial of bidegree $(0,0)$ that is of the form
    \[
    x_1^{\alpha_1}x_2^{\alpha_2}x_3^{\alpha_3}y^{\beta}z^{\gamma_1}T_a^{\gamma_2}
    \]
    with $\alpha_1,\alpha_2,\alpha_3,\gamma_1\geq 0$ and $\beta,\gamma_2\in\Z$. Then $\gamma_1 +\gamma_2 = 0$. Hence the monomial is of the form
    \[
    x_1^{\alpha_1}x_2^{\alpha_2}x_3^{\alpha_3}y^{\beta}\left(\frac{z}{T_a}\right)^{\gamma}
    \]
    with $\alpha_1,\alpha_2,\alpha_3,\gamma\geq 0, \beta\in\Z$, and $\alpha_1 + \alpha_2 + \alpha_3 + 3\gamma +2\beta = 0$. 

    This implies the total number of factors of $x_1,x_2,x_3,\frac{z}{T_a}$ is even. We can therefore group two at a time, with corresponding power of $y^{-1}$, and identify relative positive degree monomial generators over $A_a$ as the set
    \[
    \left\{\frac{x_ix_j}{y}\mid 1\leq i\leq j\leq3\right\}\bigcup \left\{\frac{zx_i}{T_ay^2}\mid 1\leq i\leq 3\right\} \bigcup \left\{\frac{z^2}{T_a^2y^3}\right\}.
    \]
 Therefore $\left(C_a\left[z,T_a,T_a^{-1}, y^{-1}\right]\right)_{(0,0)}$ is a $k$-algebra generated by at most $6+6+3+1=16$ elements.

    Finally consider the case $W = T_a^3x_i^6$ for some $a\in\Lambda$ and $1\leq i\leq 3$. Then
    \[
    \cO_\mathfrak{X}(D_+(T_a^3x_i^6)) \cong \left(C_a\left[z, T_a,T_a^{-1},x_i^{-6}\right]\right)_{(0,0)} = \left(C_a\left[z, T_a,T_a^{-1},x_i^{-1}\right]\right)_{(0,0)}.
    \]
    Similar to the previous case, $C_a[z, T_a,T_a^{-1},x_i^{-1}]$ is strongly $F$-regular, $\left(C_a\left[z, T_a,T_a^{-1},x_i^{-1}\right]\right)_{(0,0)}$ is a direct summand of $C_a[z, T_a,T_a^{-1},x_i^{-1}]$, and hence $\cO_\mathfrak{X}(D_+(T_a^3x_i^6))$ is $F$-rational.

     We next bound the number of generators of $\left(C_a\left[z, T_a,T_a^{-1},x_i^{-1}\right]\right)_{(0,0)}$ as a $k$-algebra. Consider a monomial of bidegree $(0,0)$ of the form $x_1^{\alpha_1}x_2^{\alpha_2}x_3^{\alpha_3}y^{\beta}z^{\gamma_1}T_a^{\gamma_2}\left(\frac{1}{x_i}\right)^{\delta}$ belonging to the chart. Then $\gamma_1+\gamma_2 = 0$, implying the monomial is of the form
    \[
    x_1^{\alpha_1}x_2^{\alpha_2}x_3^{\alpha_3}y^{\beta}\left(\frac{z}{T_a}\right)^{\gamma}\left(\frac{1}{x_i}\right)^{\delta}
    \]
    with $\alpha_1,\alpha_2,\alpha_3,\beta,\gamma,\delta\geq 0$ and $\alpha_1+\alpha_2+\alpha_3+2\beta +3\gamma - \delta = 0$.
    It follows that over $A_a$, $\left(C_a\left[z, T_a,T_a^{-1},x_i^{-1}\right]\right)_{(0,0)}.$ is generated by the set of monomials
    \[
    \left\{\frac{x_j}{x_i}\mid j\not=i\right\}\bigcup\left\{\frac{y}{x_i^2}, \frac{z}{T_ax_i^3}\right\}.
    \]
    Therefore as a $k$-algebra, $\left(C_a\left[z, T_a,T_a^{-1},x_i^{-1}\right]\right)_{(0,0)}$ is generated by $6+ 2+2 = 10$ elements.
\end{proof}

\begin{lemma}
    \label{lem: the embedding of the projective variety}
    For each $n\in\NN$, let $\mathcal{E}_n = \{x_1^ax_2^bx_3^cy^d\mid a+b+c+2d = n\}$. Let $\mathfrak{X} = \Proj_k(\mathcal{R})$ and consider the sets
\begin{align*}
    V_1 = \{T_az^2\mid a\in\Lambda\},\quad V_2 = \{T_a^2zm\mid a\in\Lambda\mbox{ and }m \in\mathcal{E}_3\}, \quad V_3 = \{T_a^2T_bm\mid a,b\in\Lambda\mbox{ and }m \in\mathcal{E}_6\}.
\end{align*}
    Then $V := V_1\cup V_2\cup V_3$ is a collection of $2548$ monomials in $\left(\mathcal{R}\right)_1$ that define a very ample linear system.
\end{lemma}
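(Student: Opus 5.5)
The plan is to prove the statement in three steps: count the monomials, check that the linear system is base point free by showing $V\supseteq V'$, and then verify very ampleness chart by chart. For that last step I will use the standard criterion for a base point free linear system spanned by global sections $V\subseteq(\mathcal{R})_1$. It says the induced morphism $\mathfrak{X}\to\PP^{|V|-1}_k$ is a closed immersion provided two things hold: the opens $D_+(W)$ for $W$ in some subset of $V$ cover $\mathfrak{X}$, and for each such $W$ the $k$-algebra map $k[Z_{W'}/Z_W\mid W'\in V]\to\cO_{\mathfrak{X}}(D_+(W))$, $Z_{W'}/Z_W\mapsto W'/W$, is surjective. In that case each standard affine $D_+(Z_W)\cap\mathfrak{X}$ is a closed subscheme of $\A^{|V|-1}$.

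\emph{Counting and membership.} First, every monomial in $V$ has bidegree $(3,6)$:
\begin{itemize}
\item $T_az^2$ has bidegree $(1,0)+(2,6)$;
\item $T_a^2zm$ with $m\in\mathcal{E}_3$ has bidegree $(2,0)+(1,3)+(0,3)$;
\item $T_a^2T_bm$ with $m\in\mathcal{E}_6$ has bidegree $(3,0)+(0,6)$.
\end{itemize}
Next the sizes. We have $|\Lambda|=7$. $|\mathcal{E}_3|=10+3=13$, namely the cubic $x$-monomials together with $yx_1,yx_2,yx_3$. $|\mathcal{E}_6|=28+15+6+1=50$, according to whether the $y$-exponent is $0,1,2,3$. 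For $V_3$, the map $(a,b,m)\mapsto T_a^2T_bm$ is injective, and likewise for $V_2$. Hence
\[
|V|=7+7\cdot 13+7\cdot 7\cdot 50=7+91+2450=2548,
\]
and the three sets are disjoint because they are distinguished by their $z$-degree.

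\emph{Base point freeness.} We have $V'\subseteq V$: $T_az^2\in V_1$, while $T_a^3y^3=T_a^2T_a\cdot y^3$ and $T_a^3x_i^6=T_a^2T_a\cdot x_i^6$ lie in $V_3$. So by \cref{lem: An affine cover of X} the opens $D_+(W)$ for $W\in V'$ cover $\mathfrak{X}$, and the linear system of $V$ is base point free.

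\emph{Surjectivity on charts.} For each $W\in V'$, I would take the explicit $k$-algebra generators of $\cO_{\mathfrak{X}}(D_+(W))$ found in the proof of \cref{cor: X is F-rational} and realize each one as a ratio $W'/W$ with $W'\in V$.
\begin{itemize}
\item For $W=T_az^2$: $t_b=T_bz^2/W$; $m\,T_a/z=T_a^2zm/W$ for $m$ cubic in the $x_i$; $yx_i\,T_a/z=T_a^2z(yx_i)/W$; and $y^3(T_a/z)^2=T_a^3y^3/W$.
\item For $W=T_a^3y^3$: $t_b=T_a^2T_by^3/W$; $x_ix_j/y=T_a^3y^2x_ix_j/W$; $zx_i/(T_ay^2)=T_a^2z(yx_i)/W$; and $z^2/(T_a^2y^3)=T_az^2/W$.
\item For $W=T_a^3x_i^6$: $t_b=T_a^2T_bx_i^6/W$; $x_j/x_i=T_a^3x_jx_i^5/W$; $y/x_i^2=T_a^3yx_i^4/W$; and $z/(T_ax_i^3)=T_a^2zx_i^3/W$.
\end{itemize}
All of these numerators lie in $V$. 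Surjectivity on every chart of the cover then gives that the morphism defined by $V$ is a closed immersion, so $V$ is very ample.

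The main obstacle is not conceptual but is in making sure the generating sets from \cref{cor: X is F-rational} really generate each chart as $k$-algebras (including the $t_b$ for $b\ne a$, which come from $A_a$), and in stating the closed-immersion criterion cleanly for a $k$-subspace of $(\mathcal{R})_1$ that is not all of $(\mathcal{R})_1$. For the latter I would argue that the morphism $\mathfrak{X}\to\PP(V^\vee)$ pulls $D_+(Z_W)$ back to $D_+(W)$. Surjectivity of the coordinate rings on these affines, which cover the source, then gives a closed immersion into the union of the corresponding standard opens. Since $\mathfrak{X}$ is proper, its image is closed, and so the map is a closed immersion into $\PP^{2547}_k$.
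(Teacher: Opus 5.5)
Your proposal is correct and follows the paper's proof essentially step for step: the same count $7+91+2450$, base point freeness from $V'\subseteq V$, the same chart-by-chart realization of the generators from the proof of \cref{cor: X is F-rational} as ratios $U/W$ with $U\in V$ (your numerators match the paper's exactly), and the same properness argument upgrading the closed immersion into $\bigcup_{W\in V'}D_+(Z_W)$ to one into $\PP^{2547}_k$. The only imprecision, which is harmless, is writing $\PP(V^\vee)$: the $2548$ monomials are linearly dependent in $(\mathcal{R})_1$ (for instance $T_b^2x_1x_2F=0$ gives a relation among elements of $V_3$), so the target should be read as $\PP^{2547}_k$ with one coordinate per monomial, as you do elsewhere, and the image then simply lies in a linear subspace.
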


\begin{proof}
    It is clear that each element of $V$ is a monomial of $\left(\mathcal{R}\right)_1$, that $|V_1|=7$, $|V_2| = 7\times13=91$, $|V_3| = 49\times50 = 2450$, and $|V| = 7+91+2450 = 2548$. Let $V'$ be the collection of monomials described in \cref{lem: An affine cover of X}. Then $V'\subseteq V$ and hence $V$ defines a base point free linear system. 

    Let $\varphi_V:\mathfrak{X}\to \PP^{|V|-1}_k = \PP^{2547}_k$ be the associated morphism of $V$. It remains to check $\varphi_V$ is a closed immersion; to do so, it suffices to show that for each $W\in V'$, $\mathcal{O}_\mathfrak{X}(D_+(W)) = k\left[\frac{U}{W}\mid U\in V\right]$. Indeed, for each $W\in V'$ let $Z_W$ be the corresponding coordinate of $\PP^{2547}_k$. Assuming that for each $W\in V'$ that $\mathcal{O}_\mathfrak{X}(D_+(W))$ is generated by the sections $\frac{U}{W}$ as $U$ varies in $V$ implies $D_+(W)\subseteq D_+(Z_W)$ is a closed immersion. Hence $\varphi_V$ defines a closed immersion in the open set $\bigcup_{W\in V'}D_+(Z_W)\subseteq \PP^{2547}_k$. But $\varphi_V:\mathfrak{X}\to \PP^{2547}_k$ is proper, hence its image is closed, so $\varphi_V:\mathfrak{X}\hookrightarrow \PP^{2547}_k$ is a closed immersion as needed.

    It remains to check for every $W\in V'$ the sections $\{\frac{U}{W}\mid U\in V\}$ generate $\cO_{\mathfrak{X}}(D_+(W))$ as a $k$-algebra. First suppose $W = T_az^2$ for some $a\in\Lambda$. In the proof of \cref{cor: X is F-rational}, we identified the generators of $\cO_{\mathfrak{X}}(D_+(W))$ as an $A_a$-algebra to be 
    \[
    \left\{x_1^{a_1}x_2^{a_2}x_3^{a_3}\frac{T_a}{z}\mid a_1+a_2+a_3 = 3\right\} \bigcup \left\{y^3\left(\frac{T_a}{z}\right)^2\right\}\bigcup\left\{yx_i\frac{T_a}{z}\mid 1\leq i\leq 3\right\}.
    \]
    
    If $b\in\Lambda$ then
    \[
    t_b = \frac{T_bz^2}{T_az^2}\quad \mbox{and}\quad T_bz^2\in V_1.
    \]
    If $a_1+a_2+a_3 +2b = 3$, then 
    \[
    x_1^{a_1}x_2^{a_2}x_3^{a_3}y^b\frac{T_a}{z} = \frac{T_a^2zx_1^{a_1}x_2^{a_2}x_3^{a_3}y^b}{T_az^2} \quad \mbox{and}\quad T_a^2zx_1^{a_1}x_2^{a_2}x_3^{a_3}y^b\in V_2.
    \]
    The final generator is
    \[
    y^3\left(\frac{T_a}{z}\right)^2=\frac{T_a^3y^3}{T_az^2}\quad\mbox{and}\quad T_a^3y^3\in V_3,
    \]
    so
    \[
    \cO_\mathfrak{X}(D_+(T_az^2)) = k\left[\frac{U}{T_az^2}\mid U\in V\right]
    \]
    as needed.

    We now show that if $a\in \Lambda$, then $\cO_{\mathfrak{X}}(D_+(T_a^3y^3))$ is a $k$-algebra generated by the sections $\left\{\frac{U}{T_a^3y^3}\mid U\in V\right\}$. We again use the $A_a$-algebra generators of $\cO_{\mathfrak{X}}(D_+(T_a^3y^3))$ from the proof of \cref{cor: X is F-rational}, 
    \[
    \left\{\frac{x_ix_j}{y}\mid 1\leq i\leq j\leq3\right\}\bigcup \left\{\frac{zx_i}{T_ay^2}\mid 1\leq i\leq 3\right\} \bigcup \left\{\frac{z^2}{T_a^2y^3}\right\}.
    \]
    If $b\in\Lambda$, then
    \[
    t_b = \frac{T_a^2T_by^3}{T_a^3y^3}\quad\mbox{and}\quad T_a^2T_by^3\in V_3.
    \]
    If $1\leq i\leq j\leq 3$ then
    \[
    \frac{x_ix_j}{y} = \frac{T_a^3x_ix_jy^2}{T_a^3y^3}\quad\mbox{and}\quad T_a^3x_ix_jy^2\in V_3.
    \]
    If $1\leq i\leq 3$ then
    \[
    \frac{zx_i}{T_ay^2} = \frac{T_a^2zyx_i}{T_a^3y^3}\quad \mbox{and}\quad T_a^2zyx_i\in V_2.
    \]
    The final generator
    \[
    \frac{z^2}{T_a^2y^3} = \frac{T_az^2}{T_a^3y^3}\quad\mbox{and}\quad T_az^2\in V_1.
    \]
    This completes the argument that
    \[
    \cO_\mathfrak{X}(D_+(T_a^3y^3)) = k\left[\frac{U}{T_a^3y^3}\mid U\in V\right]
    \]

    The final class of charts we need to check is a chart of the form 
    \[
    \cO_\mathfrak{X}(D_+(T_a^3x_i^6)) \cong \left(C_a\left[z, T_a,T_a^{-1},x_i^{-6}\right]\right)_{(0,0)} = \left(C_a\left[z, T_a,T_a^{-1},x_i^{-1}\right]\right)_{(0,0)}.
    \]
    Again by the proof of \cref{cor: X is F-rational}, we have $A_a$-algebra generators of $\cO_\mathfrak{X}(D_+(T_a^3x_i^6))$ given by
    \[
    \left\{\frac{x_j}{x_i}\mid j\not=i\right\}\bigcup\left\{\frac{y}{x_i^2}, \frac{z}{T_ax_i^3}\right\}.
    \]
    If $b\in\Lambda$ then
    \[
    t_b = \frac{T_a^2T_bx_i^6}{T_a^3x_i^6}\quad\mbox{and}\quad T_a^2T_bx_i^6\in V_3.
    \]
    If $j\not=i$,
    \[
    \frac{x_j}{x_i} = \frac{T_a^3x_i^5x_j}{T_a^3x_i^6}\quad\mbox{and}\quad T_a^3x_i^5x_j\in V_3.
    \]
    As for the final two generators
    \[
    \frac{y}{x_i^2} = \frac{T_a^3x_i^4y}{T_a^3x_i^6}\quad\mbox{and}\quad T_a^3x_i^4y\in V_3
    \]
    and
    \[
    \frac{z}{T_ax_i^3} = \frac{T_a^2zx_i^3}{T_a^3x_i^6}\quad\mbox{and}\quad T_a^2zx_i^3\in V_2.
    \]
    Therefore $\cO_\mathfrak{X}(D_+(T_a^3x_i^6)) = k\left[\frac{U}{T_a^3x_i^6}\mid U\in V\right]$ as needed to complete the proof.  
\end{proof}

\begin{theorem}
\label{theorem: the embedding, F-rationality, failure of Bertini}
Let $\mathfrak{X}=\Proj_k(\mathcal R)$ and identify $\mathfrak{X}$ with its image
under the closed immersion into $\PP_k^{2547}$ associated with the closed embedding of the very ample linear system associated to $V$ of \cref{lem: the embedding of the projective variety}. There exists a dense open subset $U\subseteq(\PP_k^{2547})^\vee$ such that, for every $H\in U(k)$, the hyperplane section $\mathfrak{X}\cap H$ is not $F$-injective.
\end{theorem}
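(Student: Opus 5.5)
We plan to reduce the statement to \cref{lemma: Not F-injective lemma}, applied on one of the charts $D_+(T_az^2)\cong \Spec R_a$. Work in the chart $W=T_az^2$ with $a=(0,0,0,1)$, so that $m_a=y$ and $\cO_{\mathfrak{X}}(D_+(W))\cong R_a$; this is exactly the ring $R$ of \cref{lemma: Not F-injective lemma}, with $t_1,\dots,t_6$ the coordinates $t_b$ for $b\neq a$. A hyperplane $H=V\bigl(\sum_{U\in V}c_UZ_U\bigr)$ restricts on this chart to the element
\[
L_H=\sum_{U\in V}c_U\frac{U}{T_az^2}\in R_a .
\]

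The first step is to sort the sections $U/(T_az^2)$ by their image modulo $\fn^{[2]}\cap R_a$, where $\fn=(t_b,x_1,x_2,x_3,y)C_a$. The elements of $V_1$ give $1$ (for $U=T_az^2$) and the linear coordinates $t_b$. The elements of $V_2$ give $\frac{T_b^2}{T_a^2}\,m\,\frac{T_a}{z}=t_b^2\,m\frac{T_a}{z}$, which lies in $\fn^{[2]}$ when $b\neq a$; the seven-by-thirteen family with $b=a$ gives the degree-one generators $m\frac{T_a}{z}$, and these lie in $\fn\setminus\fn^{[2]}$ in general. Elements of $V_3$ give $t_b^2t_c\,m(T_a/z)^2$ with $m\in\mathcal{E}_6$. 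We need to check that every such term lies in $\fn^{[2]}$: a monomial in $\mathcal{E}_6$ has weighted degree $6$, so it either contains some $x_i^2$ or $y^2$, or is divisible by $y\cdot$(a square), or is a product $x_ix_jx_k\cdots$. The factor $t_b^2$ with $b\neq a$ also helps. This is the step where a real problem may appear: the terms $T_a^3m$ with $m\in\mathcal{E}_6$ squarefree-ish, such as $T_a^3x_1x_2x_3y\cdot$?, give $m(T_a/z)^2$, which need not lie in $\fn^{[2]}$. Similarly, the $V_2$ terms with $b=a$ produce linear-in-$R_a$ generators like $x_1x_2x_3\frac{T_a}{z}$ and $yx_i\frac{T_a}{z}$. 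These are exactly analogous to the $g_j$ in the affine example.

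Because of this, we will not aim for $\epsilon\in\fn^{[2]}$ at the origin. Instead, for each $H$ we choose a moving base point on $Z_a=V((R_a)_+)\cong\A^6$, as in the affine proof. Write
\[
L_H=c_0+h(t)+\sum_{b\neq a}t_b^2\,\ell_b+\ell_0+\epsilon_0 ,
\]
where $h$ is linear in the $t_b$, the $\ell_b$ and $\ell_0$ are $k$-combinations of the thirteen degree-one generators, and $\epsilon_0\in (R_a)_+^{2}$. The terms in $(R_a)_+^2$ that fail to lie in $\fn^{[2]}$ will need to be absorbed using the translation automorphism $\Phi_\tau$ of \cref{lem: closed-point-isomorphism}, since $Y^2=\sum T_bm_b^2$. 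We therefore choose a closed point $\tau\in Z_a$ with $c_0+h(\tau)=0$ and $\tau_b$ killing the coefficients of suitable generators, as in the construction of $\Gamma_{H_a}$. After translating by $\Phi_\tau$, $L_H$ becomes $\widetilde h+\epsilon$ with $\widetilde h$ a nonzero linear form in the $T_b$ and $\epsilon\in\fn^{[2]}\cap R$.

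The dense open set $U$ is cut out by the conditions that make this work: the linear form $h$ is nonzero and not proportional in a way that breaks the solvability of the $\tau$-equations. Each of these is a nonvanishing of a coordinate or a polynomial in the $c_U$, and a nonempty intersection of such sets is open and dense. For $H\in U(k)$, \cref{lemma: Not F-injective lemma} then shows that $R_a/(L_H)$, localized at $\fm_\tau$, is not $F$-injective, so $\mathfrak{X}\cap H$ is not $F$-injective. The main obstacle is the bookkeeping showing that, after the $\tau$-translation, all degree-one generator terms and all non-$\fn^{[2]}$ quadratic terms cancel or land in $\fn^{[2]}$. If this cannot be arranged for every $H$, we would instead use the remaining freedom in $\tau$ (a positive-dimensional family, as with the line $\Gamma_H$), together with the fact that $\Phi_\tau$ sends $y$ to $y+\sum\lambda_bm_b$, to cancel the $\ell_0$ contribution.
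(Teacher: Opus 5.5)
Your overall architecture matches the paper's: work on the chart $D_+(T_{a_0}z^2)\cong\Spec R_{a_0}$, translate to a point $\tau$ of the zero section via $t\mapsto t+\tau$, $y\mapsto \bar y=y+\sum\sqrt{\tau_b}\,m_b$, and invoke \cref{lemma: Not F-injective lemma}. However, the step that actually proves the theorem is left open: you never show that a suitable $\tau$ exists for general $H$. Two of your claims would make it fail as you have set things up.

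First, every monomial of $\mathcal{E}_6$ is divisible by some $x_i^2$ or by $y^2$. A squarefree monomial in $x_1,x_2,x_3,y$ has weighted degree at most $5$; your example $x_1x_2x_3y$ has degree $5$. This persists after the $y$-translation, because the translated terms are still homogeneous of degree $6$ in $x_1,x_2,x_3,\bar y$. So the $V_3$-terms always lie in $\fn^{[2]}$ (at the translated point), and nothing needs to be absorbed.

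Second, it is false that the thirteen degree-one generators lie in $\fn\setminus\fn^{[2]}$. The nine cubic monomials in the $x_i$ other than $x_1x_2x_3$ contain a square $x_i^2$, and the translation does not touch the $x_i$. If you had to kill the constant term and the coefficients of all thirteen generators, you would face $14$ affine-linear equations in $6$ unknowns, which are inconsistent for general $H$.

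The missing idea is the Frobenius structure of the $V_2$-coefficients. With $t_{a_0}=1$, the coefficient of $m\in\mathcal{E}_3$ in $L_H$ is
$\sum_{b\in\Lambda}\varepsilon_{b,m}t_b^2=\bigl(\sum_{b}\sqrt{\varepsilon_{b,m}}\,t_b\bigr)^2=\ell_m^2$. This holds because $V_2$ consists of the monomials $T_b^2zm$ and $k$ is perfect of characteristic $2$. Two consequences follow.
\begin{enumerate}
\item The condition on $\tau$ is affine-linear.
\item Once $\ell_m(\tau)=0$, the translated coefficient is the square of a linear form in $\bar t$. So $\ell_m^2$ times the translated monomial lies in $\fn^{[2]}$ whatever that monomial is.
\end{enumerate}
This also disposes of the extra cubic terms produced by $y\mapsto\bar y+\sum\lambda_bm_b$. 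They are not an independent tool for cancelling the $\ell_0$ contribution, as you suggest; they are simply multiplied by $\ell_m^2$.

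Only $m\in\{x_1x_2x_3,x_1y,x_2y,x_3y\}$ require a condition. So $\tau$ must satisfy five affine-linear equations, $h=0$ and $\ell_m=0$ for these four $m$, in six unknowns. The dense open set $U$ is the locus where the $5\times6$ matrix of their linear parts has rank $5$. This is made polynomial in the coordinates of $H$ by squaring the entries, as with the paper's $N_H$. The rank condition guarantees a solution (indeed a line $\Gamma_H$ of them) and that the linear part of $h$ is nonzero, which is exactly what \cref{lemma: Not F-injective lemma} needs. Your description of $U$ as ``$h$ nonzero and not proportional in a way that breaks solvability'' does not supply this.
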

\begin{proof}
    Put $a_0=(0,0,0,1)$ and
    $\Lambda'=\Lambda\setminus\{a_0\}$. On the affine open set
    $D_+(T_{a_0}z^2)$, if we identify $t_a = \frac{T_a}{T_{a_0}}$ then 
    \[
        \cO_\mathfrak{X}(D_+(T_{a_0}z^2))
        \cong R_{a_0}=C_{a_0}^{(3)}
    \]
    is the affine ring of \cref{section quasi-projective example} and $C_{a_0} = \frac{A_{a_0}[x_1,x_2,x_3,y]}{(y^2 + \sum_{a\in\Lambda'}t_am_a^2)}$.

    Under the chart identification, a general hyperplane $H$ restricts to the equation $L=0$, where
    \[
        L=\sum_{a\in\Lambda}\delta_a t_a + \sum_{\substack{a\in\Lambda\\m\in\mathcal{E}_3}} \varepsilon_{a,m}t_a^2m+\sum_{\substack{a,b\in\Lambda\\m\in\mathcal{E}_6}} \gamma_{a,b,m}t_a^2t_bm.
    \]
    To simplify notation, label the following four elements of $\mathcal{E}_3$: $m_0 = x_1x_2x_3, m_1 = x_1y, m_2 = x_2y$, and $m_3 = x_3y$. Also denote
\begin{align*}
    h := \sum_{a\in\Lambda}\delta_a t_a,\quad \ell_i:=\sum_{a\in\Lambda} \sqrt{\varepsilon_{a,m_i}}t_a\text{ for }0\leq i\leq 3.
\end{align*}

    Choose a labeling of the six elements $a_1,a_2,a_3,a_4,a_5,a_6$ of $\Lambda'$. For each hyperplane $H$ and associated restriction $L=0$, let $M_H,N_H$ be the $5\times6$ matrices
    \[
    M_H:=\begin{pmatrix}
        \delta_{a_1} & \delta_{a_2} & \cdots & \delta_{a_6} \\
        \sqrt{\varepsilon_{a_1,m_0}} & \sqrt{\varepsilon_{a_2,m_0}} & \ldots & \sqrt{\varepsilon_{a_6,m_0}}\\
        \vdots & \vdots & & \vdots\\
        \sqrt{\varepsilon_{a_1,m_3}} & \sqrt{\varepsilon_{a_2,m_3}} & \cdots & \sqrt{\varepsilon_{a_6,m_3}}
    \end{pmatrix} \quad \mbox{and} \quad
    N_H:=\begin{pmatrix}
        \delta_{a_1}^2 & \delta_{a_2}^2 & \cdots & \delta_{a_6}^2 \\
        {\varepsilon_{a_1,m_0}} & {\varepsilon_{a_2,m_0}} & \ldots & {\varepsilon_{a_6,m_0}}\\
        \vdots & \vdots & & \vdots\\
        {\varepsilon_{a_1,m_3}} & {\varepsilon_{a_2,m_3}} & \cdots & {\varepsilon_{a_6,m_3}}
    \end{pmatrix}.
    \]
    Let $U\subseteq (\PP^{2547}_k)^{\vee}$ be the open subset consisting of hyperplanes $H$ so that the associated $5\times6$ matrix $N_H$ has maximal rank $5$. Equivalently, if $H\in U(k)$, then the linear parts of $h,\ell_0,\ell_1,\ell_2,\ell_3$ are linearly independent. Writing $\vec{t}=(t_{a_1},\ldots,t_{a_6})^{\mathsf T}$ and
    letting $\vec c_H\in k^5$ be the vector of constant terms,
    the equations defining the vanishing locus $V(h,\ell_0,\ell_1,\ell_2,\ell_3)$ are
    \[
        M_H\vec{t}=-\vec c_H.
    \]
    The linear map $M_H:k^6\to k^5$ is surjective, so this
    system has a solution $\vec{\tau}\in k^6$.
    Moreover, $\dim_k\ker(M_H)=1$. Consequently,
    \[
        \Gamma_H
        :=V(h,\ell_0,\ell_1,\ell_2,\ell_3)
        =\vec\tau+\ker(M_H)
        \cong\mathbb A_k^1
    \]
    is a nonempty affine line in $\Spec(A_{a_0})\cong\mathbb A_k^6$. Identify $\Gamma_H$ with its image in $V((R_{a_0})_+)\subseteq \mathfrak{X}$, then $L-h\in(R_{a_0})_+$ and $h$ vanishes on $\Gamma_H$. Hence the restriction of $L$ to $\Gamma_H$ is zero. Thus $\Gamma_H\subseteq \mathfrak{X}\cap H$. 
    
    Let $\mathbf{\tau} = (\tau_{a_1},\tau_{a_2},\ldots,\tau_{a_6})\in \Gamma_H$. Consider the change of variables defined by $\overline{t_{a_i}} = t_{a_i}+\tau_{a_i}$ and $\overline{y} = y+\sum_{i=1}^6\sqrt{\tau_{a_i}}m_{a_i}$. Under the change of variables, the hypersurface equation defining $C_{a_0}$ becomes
    \[
    y^2 + \sum_{i=1}^6t_{a_i}m_{a_i}^2 = y^2 + \sum_{i=1}^6\overline{t_{a_i}}m_{a_i}^2 + \sum_{i=1}^6\tau_{a_i}m_{a_i}^2 = \overline{y}^2 + \sum_{i=1}^6\overline{t_{a_i}}m_{a_i}^2.
    \]
    The point $\mathbf{\tau}\in \Gamma_H$ is chosen so that $h(\mathbf{\tau}) = \ell_0(\mathbf{\tau}) = \ell_1(\mathbf{\tau})=\ell_2(\mathbf{\tau})=\ell_3(\mathbf{\tau}) = 0$. Therefore under the change of coordinates, the functions $h,\ell_0,\ell_1,\ell_2,\ell_3$ have $0$ constant term. Explicitly using $t_{a_0}=1$,
    \[
    h = \sum_{i=0}^6\delta_{a_i}t_{a_i} =\delta_{a_0} +  \sum_{i=1}^6\delta_{a_i}\overline{t_{a_i}} + \sum_{i=1}^6\delta_{a_i}\tau_{a_i} = \sum_{i=1}^6\delta_{a_i}\overline{t_{a_i}}
    \]
    and for each $0\leq j\leq 3$,
    \[
    \ell_{j} = \sum_{i=0}^6 \sqrt{\varepsilon_{a_i,m_j}}\,t_{a_i} = \sum_{i=1}^6 \sqrt{\varepsilon_{a_i,m_j}}\,\overline{t_{a_i}}.
    \]

    Consider the translation of $L$ under the change of variables. The initial sum $\sum_{a\in\Lambda}\delta_at_a$ defining $L$ is $h$. The middle sum $\sum_{\substack{a\in\Lambda\\m\in\mathcal{E}_3}} \varepsilon_{a,m}t_a^2m$ expands to \[ \sum_{\substack{a\in\Lambda\\m\in\mathcal{E}_3}} \varepsilon_{a,m}t_a^2m = \ell_0^2m_0 + \ell_1^2m_1 +\ell_2^2m_2 + \ell_3^2m_3 + \sum_{\substack{a\in\Lambda \\ m\in\mathcal{E}_3\setminus\{m_0,m_1,m_2,m_3\}}}\varepsilon_{a,m}t_a^2m. \] The monomials of $\mathcal{E}_3\setminus\{m_0,m_1,m_2,m_3\}$ are the nine monomials of degree three in $x_1,x_2,x_3$ other than $x_1x_2x_3$. The last sum defining $L$, namely $\sum_{\substack{a,b\in\Lambda\\m\in\mathcal{E}_6}} \gamma_{a,b,m}t_a^2t_bm$ is a polynomial over $A_{a_0}$ in the variables $x_1,x_2,x_3,y$ of degree $6$. Therefore the change of variables gives $L$ the form
    \[
    L = h + \sum_{i=0}^3\ell_i^2g_i + w + P
    \]
    where
    \begin{itemize}
        \item $h$ is a nonzero linear form in the $6$ variables $\overline{t_{a_1}},\ldots,\overline{t_{a_6}}$;
        \item $g_i\in A_{a_0}[x_1,x_2,x_3,y]$ is homogeneous of degree three;
        \item $w$ is an $A_{a_0}$-linear combination of cubic monomials in $x_1,x_2,x_3$ other than $x_1x_2x_3$;
        \item $P$ is homogeneous of degree $6$.
    \end{itemize}
    If $\fn$ is the maximal ideal of the origin of $C_{a_0}$, then $\sum_{i=0}^3\ell_i^2g_i+w+P\in \fn^{[2]}\cap R_{a_0}$. Thus, $L$ satisfies the hypotheses of \cref{lemma: Not F-injective lemma} and $R_{a_0}/(L)$ is not $F$-injective.
\end{proof}

\begin{theorem}
    \label{thm: the projective example embeds in P^{25}}
    Let $\mathfrak{X} = \Proj_k(\mathcal{R})$. There exists a closed immersion $\mathfrak{X}\hookrightarrow \PP^{29}_k$ and a dense open subset $U\subseteq \left(\PP^{29}_k\right)^{\vee}$ so that for every $H\in U(k)$, $\mathfrak{X}\cap H$ is not $F$-injective.
\end{theorem}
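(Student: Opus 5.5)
The plan is to obtain the embedding into $\PP^{29}_k$ by generic linear projection from the closed immersion $\mathfrak{X}\hookrightarrow\PP^{2547}_k$ of \cref{lem: the embedding of the projective variety}, as outlined in the introduction. Then we transport the open set of bad hyperplanes from \cref{theorem: the embedding, F-rationality, failure of Bertini}.

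\textbf{Step 1: a general projection is still a closed immersion.} Since $\dim\mathfrak{X}=9$ and by \cref{cor: X is F-rational} every closed point has embedding dimension at most $20$, we use the classical dimension count. Projection from a linear center $\Lambda_0\cong\PP^{2517}_k$ disjoint from $\mathfrak{X}$ restricts to a finite morphism $\pi:\mathfrak{X}\to\PP^{29}_k$. It is a closed immersion as soon as $\Lambda_0$ avoids two loci:
\begin{enumerate}[label=(\roman*)]
\item the secant variety, i.e.\ the closure of the union of lines through two points of $\mathfrak{X}$, of dimension at most $2\cdot 9+1=19$;
\item the union of the embedded tangent spaces $\PP(T_x\mathfrak{X})$, of dimension at most $9+20=29$.
\end{enumerate}
Both have dimension at most $29$, and a general linear subspace of codimension $30$ misses a closed set of dimension at most $29$. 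Hence for $\Lambda_0$ in a dense open subset of the Grassmannian, $\pi$ is injective on points and unramified (injective on Zariski tangent spaces). Being proper, it is then a closed immersion.

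\textbf{Step 2: pull back hyperplanes.} Hyperplanes $H\subseteq\PP^{29}_k$ correspond bijectively to hyperplanes $\widetilde H\subseteq\PP^{2547}_k$ containing $\Lambda_0$. Moreover, $\pi^{-1}(H)=\mathfrak{X}\cap\widetilde H$ scheme-theoretically, since the linear form defining $H$ pulls back to the linear form defining $\widetilde H$. The induced map $(\PP^{29}_k)^\vee\hookrightarrow(\PP^{2547}_k)^\vee$ is a linear embedding onto $\Lambda_0^\perp$. So it suffices to choose $\Lambda_0$ so that $\Lambda_0^\perp$ meets the open set $U_{2547}$ of \cref{theorem: the embedding, F-rationality, failure of Bertini}. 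Then $U:=\Lambda_0^\perp\cap U_{2547}$ is a nonempty, hence dense, open subset of the irreducible $\Lambda_0^\perp\cong(\PP^{29})^\vee$.

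\textbf{Step 3: compatibility of the two genericity conditions.} This is the main point to check. $U_{2547}$ is the nonvanishing locus of the $5\times 5$ minors of the matrix $N_H$, and the $\Lambda_0^\perp$ are exactly the $29$-dimensional linear subspaces of $(\PP^{2547})^\vee$. A general such subspace meets any nonempty open set, so the set of $\Lambda_0$ with $\Lambda_0^\perp\cap U_{2547}\neq\emptyset$ is a nonempty open subset of the Grassmannian. Intersecting it with the open set from Step 1 gives a valid $\Lambda_0$, since the Grassmannian is irreducible. Openness of the condition "$\Lambda_0^\perp\not\subseteq$ complement of $U_{2547}$" follows from properness of the incidence correspondence. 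If a more explicit argument is wanted, one can pick a single hyperplane $H_0\in U_{2547}$, a point of the dual Grassmannian, and observe that requiring $\Lambda_0\subseteq H_0$ is a closed condition. That condition meets the open set of Step 1, because Step 1's argument works equally well for centers inside the fixed $H_0$: we need $\Lambda_0\subseteq H_0\cong\PP^{2546}$ to avoid $\mathfrak{X}\cap H_0$ together with the secant and tangent loci, all of dimension at most $29$ inside $H_0$. I expect the dimension bookkeeping in Step 1 to be the routine part and this compatibility to be the subtle one. The second formulation, fixing $H_0$ first, is the one I would write out.
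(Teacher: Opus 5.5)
Your proposal follows the paper's route. You project $\varphi_V:\mathfrak{X}\hookrightarrow\PP^{2547}_k$ from a general $\PP^{2517}_k$ that avoids the secant variety (dimension $\le 19$) and the tangent locus (dimension $\le 9+20=29$, using \cref{cor: X is F-rational}). You then restrict the open set of Theorem~\ref{theorem: the embedding, F-rationality, failure of Bertini} to $\mathcal{L}^\perp\cong(\PP^{29}_k)^\vee$. Your first formulation of Step 3 is correct: both conditions on the center are nonempty open in the irreducible Grassmannian. It actually justifies the claim that $\mathcal{L}^\perp$ meets $U$, which the paper states without argument.

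There is one error, in the $H_0$-first variant you say you would write out. Inside $H_0\cong\PP^{2546}_k$ the center $\Lambda_0\cong\PP^{2517}_k$ has codimension $29$, not $30$. So a general such $\Lambda_0$ avoids only closed sets of dimension $\le 28$; by the projective dimension theorem it meets every $29$-dimensional subvariety of $H_0$. The tangent locus may have dimension exactly $29$, so ``dimension at most $29$ inside $H_0$'' does not suffice. If $H_0$ happened to contain a $29$-dimensional component of the tangent locus, no center inside $H_0$ would give an immersion.

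The fix is to choose $H_0\in U_{2547}$ containing no irreducible component of $\mathrm{Sec}\cup\mathrm{Tan}$. This is possible because the hyperplanes containing a given nonempty subvariety form a proper linear subspace of $(\PP^{2547}_k)^\vee$, while $U_{2547}$ is dense. Then $\dim\bigl(H_0\cap(\mathrm{Sec}\cup\mathrm{Tan})\bigr)\le 28$, and a general $\Lambda_0\subseteq H_0$ works. Alternatively, write out your first formulation, which needs no such adjustment.
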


\begin{proof}
    Let $\varphi_V:\mathfrak{X}\hookrightarrow \PP^{2547}_k$ the closed embedding of the very ample linear system associated to $V$ of \cref{lem: the embedding of the projective variety}, and $U\subseteq \left(\PP^{2547}_k\right)^\vee$ the dense open subset of Theorem~\ref{theorem: the embedding, F-rationality, failure of Bertini} so that for each $H\in U$, $\mathfrak{X}\cap H$ is not $F$-injective. Since $\dim \mathfrak{X}=9$, the secant variety of $\mathfrak{X}$ has dimension at most $2\cdot 9+1=19$, see \cite[\S~1.1]{Adl87} Consequently, a general linear subspace $\mathcal{L}\cong\PP^{2527}_k\subseteq\PP^{2547}_k$ is disjoint from the secant variety. Projection from $\mathcal{L}$ therefore induces a morphism $\mathfrak{X}\to\PP^{19}_k$ that separates distinct closed points.

    For each $x\in \mathfrak{X}$ let $\mathbb{T}_x\mathfrak{X}$ be the embedded projective tangent space of $x\in \mathfrak{X}$. A fiber of $$\{(x,p)\in \mathfrak{X}\times\PP^N_k\mid p\in\mathbb{T}_x\mathfrak{X}\}\to \mathfrak{X}$$ has dimension at most $20$ by \cref{cor: X is F-rational}. By \cite[\href{https://stacks.math.columbia.edu/tag/02JS}{Tag 02JS}]{stacks-project}, 
    \[
    \dim\overline{\bigcup_{x\in \mathfrak{X}}\mathbb{T}_x\mathfrak{X}}\leq \dim(\mathfrak{X}) + 20 = 9+ 20 = 29.
    \]
    Therefore a general linear subspace $\mathcal{L}\cong \PP^{2547-30}_k$ is disjoint from $\overline{\bigcup_{x\in \mathfrak{X}}\mathbb{T}_x\mathfrak{X}}$ and projection from such an $\mathcal{L}$ induces a morphism $\mathfrak{X}\to \PP^{29}_k$ that separates tangent directions. 
    
    Let $\mathcal{L}\cong \PP^{2547-30}_k$ be a general linear subspace of $\PP^{2547}_k$. Projection from $\mathcal{L}$ induces a morphism $\psi:\mathfrak{X}\to \PP^{29}_k$ whose associated linear system separates points and tangent vectors, i.e., $\psi$ is a closed immersion. The linear subspace $\mathcal{L}^\perp\cong(\PP^{29}_k)^\vee\subseteq(\PP^{2547}_k)^\vee$ parameterizing hyperplanes containing $\mathcal{L}$ meets $U$. Thus $U\cap\mathcal{L}^\perp$ is a nonempty dense open subset of $\mathcal{L}^\perp$. The corresponding hyperplane sections under $\psi$ agree scheme-theoretically with those under $\varphi_V$. Consequently, a general hyperplane section of $\psi(\mathfrak{X})$ is not $F$-injective.
\end{proof}

\printbibliography

\end{document}